\documentclass[reqno, 11pt]{amsart}

\usepackage[parfill]{parskip}    
\setlength{\topsep}{\parskip}

\usepackage{amsrefs, amsmath}
\usepackage{diagbox}
\usepackage{amsfonts, mathrsfs}
\usepackage{amssymb}
\usepackage{amscd}
\usepackage{cleveref, multirow} 
\usepackage{fullpage, color}
\usepackage[labelformat=empty]{subfig}
\usepackage{booktabs}
\usepackage{xypic}
\usepackage[dvips]{graphicx,epsfig}
\usepackage{enumerate, longtable}
\usepackage{makecell}

\numberwithin{equation}{section}

%\renewcommand{\baselinestretch}{1.2}
%\usepackage[inline]{showlabels}

%\setlength{\marginparwidth}{1.5in}
%\addtolength{\hoffset}{-.5in}
%\addtolength{\textwidth}{-.3in}
%\addtolength{\evensidemargin}{1in}
%\addtolength{\oddsidemargin}{-.5in}
%%
%% amsrefs setup for Acta Mathematica's bibliography style
%%

\BibSpec{article}{%
+{}{\sc \PrintAuthors} {author}
+{,}{ \textrm} {title}
+{,}{ \textit} {journal}
+{,}{ } {volume}
%+{}{ \parenthesize} {date}
+{}{ \IfEmptyBibField{journal}{}{\PrintDate{date}}} {transition}
+{,}{ } {pages}
%+{.}{ } {transition}
+{,}{ } {status}
+{.}{} {transition}
+{}{ Preprint available at \tt} {eprint}
+{.}{} {transition}
}

\BibSpec{book}{%
+{}{\sc \PrintAuthors} {author}
+{,}{ \textit} {title}
+{.}{ } {series}
+{,}{ } {volume}
+{.}{ } {publisher}
+{,}{ } {place}
+{,}{ } {date}
+{.}{} {transition}
}

\BibSpec{collection.article}{
+{} {\sc \PrintAuthors} {author}
+{,} { \textrm } {title}
%+{.} { } {part}
%+{:} { \textit} {subtitle}
%+{,} { \PrintContributions} {contribution}
+{,} { in \it \PrintConference} {conference}
+{,} { pp.~} {pages}
+{.} {\PrintBook} {book}
%+{,} { } {booktitle}
+{,} { \PrintDateB} {date}
%+{,} { } {status}
%+{,} { \PrintDOI} {doi}
%+{,} { available at \eprint} {eprint}
%+{} { \parenthesize} {language}
%+{} { \PrintTranslation} {translation}
%+{;} { \PrintReprint} {reprint}
%+{.} { } {note}
+{.} {} {transition}
%+{} {\SentenceSpace \PrintReviews} {review}
}

\BibSpec{innerbook}{
+{.} { \emph} {title}
+{.} { } {part}
+{:} { \emph} {subtitle}
+{.} { } {series}
+{,} { } {volume}
+{.} { Edited by \PrintNameList} {editor}
+{.} { Translated by \PrintNameList}{translator}
+{.} { \PrintContributions} {contribution}
+{.} { } {publisher}
+{.} { } {organization}
+{,} { } {address}
+{,} { \PrintEdition} {edition}
+{,} { \PrintDateB} {date}
+{.} { } {note}
%+{.} {} {transition}
}

%%
%% end of amsrefs setup
%%

\allowdisplaybreaks[3]

%Frak

\crefname{equation}{Eq.}{Eqs.}
\crefname{eqnarray}{Eq.}{Eqs.}
\crefname{algo}{Algorithm}{Algorithms}
\crefname{conj}{Conjecture}{Conjectures}
\crefname{defn}{Definition}{Definitions}
\crefname{lem}{Lemma}{Lemmas}
\crefname{thm}{Theorem}{Theorems}
\crefname{claim}{Claim}{Claims}
\crefname{rmk}{Remark}{Remarks}
\crefname{prop}{Proposition}{Propositions}
\crefname{section}{Section}{Sections}
\crefname{appendix}{Appendix}{Appendices}
\crefname{cor}{Corollary}{Corollaries}
\crefname{figure}{Figure}{Figures}
\crefname{table}{Table}{Tables}
\crefname{example}{Example}{Examples}
\crefname{prob}{Problem}{Problems}
\crefname{assm}{Assumption}{Assumptions}

\newcommand{\de}{{\partial}}

\newcommand{\rd}{\mathrm{d}}
\newcommand{\ri}{\mathrm{i}}
\newcommand{\re}{\mathrm{e}}

%Frak

\newcommand{\bbV}{\mathbb{V}}

\newcommand{\bbA}{\mathbb{A}}

\newcommand{\bbN}{\mathbb{N}}

\newcommand{\bbZ}{\mathbb{Z}}
\newcommand{\bbR}{\mathbb{R}}
\newcommand{\bbC}{\mathbb{C}}
\newcommand{\bbP}{\mathbb{P}}

\newcommand{\bbQ}{\mathbb{Q}}

\def\bary{\begin{array}} 
\def\eary{\end{array}} 
\def\ben{\begin{enumerate}} 
\def\een{\end{enumerate}}
\def\bit{\begin{itemize}} 
\def\eit{\end{itemize}}
\def\nn{\nonumber} 

% Cal

\newcommand{\cO}{\mathcal{O}}
\newcommand{\cT}{\mathcal{T}}

\newcommand{\cP}{\mathcal{P}}

\newcommand{\DD}{\mathcal{D}}
\newcommand{\LL}{\mathcal{L}}

\newcommand{\cN}{\mathcal{N}}
\newcommand{\cW}{\mathcal{W}}
\newcommand{\cG}{\mathcal{G}}
\newcommand{\cA}{\mathcal{A}}

\newcommand{\cB}{\mathcal{B}}

\newcommand{\cJ}{\mathcal{J}}

\newcommand{\cM}{\mathcal M}

\def\beq{\begin{equation}}                     %  
\def\eeq{\end{equation}}                       % 
\def\bea{\begin{eqnarray}}                     %         % 
\def\eea{\end{eqnarray}}
\def\bary{\begin{array}} 
\def\eary{\end{array}} 
\def\ben{\begin{enumerate}} 
\def\een{\end{enumerate}}
\def\bit{\begin{itemize}} 
\def\eit{\end{itemize}}
\def\nn{\nonumber} 
\def\de {\partial}

%%%%%%%%%%%%%%%%%%%%%%%%%%%%%%%%%%
%%% math symbols for Z,C,R... %%%
%%%%%%%%%%%%%%%%%%%%%%%%%%%%%%%%%%%

%%%%%%%%%%%%%%%%%%%%%%%%%%
%  others     %%%%%%%%%%%%%
%%%%%%%%%%%%%%%%%%%%%%%%%%%%

\def\a{\alpha}

%%
%% theorems
%% 

\theoremstyle{plain}

\newtheorem{thm}{Theorem}[section]
\newtheorem{lem}[thm]{Lemma}
\newtheorem{algo}{Algorithm}[section]
\newtheorem{assm}{Assumption}[section]
\newtheorem{prop}[thm]{Proposition}

\newtheorem*{conj*}{Conjecture}
\newtheorem{cor}[thm]{Corollary}
\newtheorem*{cor*}{Corollary}

\newtheorem{prob}{Problem}[section]
\newtheorem{defn}{Definition}[section]

\theoremstyle{definition}

\newtheorem{rmk}[thm]{Remark}
\newtheorem{example}{Example}[section]

%%
%% operators
%%

\newcommand{\GITl}[1]{\backslash \!\! \backslash _{\kern-.2em #1 \kern0.1em}}
\newcommand{\GIT}[1]{/\!\!/_{\kern-.2em #1 \kern0.1em}}

\renewcommand{\l}{\left}
\renewcommand{\r}{\right}

\def\bred{\begin{color}{red}}
\def\ered{\end{color}}

\def\bes{\begin{subequations}}
\def\ees{\end{subequations}}

\begin{document}

\title{Exterior powers of the adjoint representation and the
  Weyl ring of $\mathrm{E}_8$}

\author{Andrea Brini}
\address{School of Mathematics,
Edgbaston, B15 2TT, Birmingham, United Kingdom
}
\address{School of Mathematics and Statistics,
Hounsfield Road, S3 7RH, Sheffield, United Kingdom
}
\address{On leave from IMAG, Univ.~Montpellier, CNRS, Montpellier, France}
\email{a.brini@sheffield.ac.uk}

\begin{abstract}

  I derive explicitly all polynomial relations in the
  character ring of $E_8$ of the form $\chi_{\wedge^k \mathfrak{e}_8} -
  \mathfrak{p}_{k} (\chi_{1}, \dots, \chi_{8})=0$, where
  $\wedge^k \mathfrak{e}_8$ is an arbitrary exterior power of the adjoint
  representation and $\chi_{i}$ is the $i^{\rm th}$ fundamental character. This has
  simultaneous implications for the theory of relativistic integrable systems,
  Seiberg--Witten theory, quantum topology, orbifold Gromov--Witten theory,
  and the arithmetic of elliptic curves. The solution is obtained by reducing the problem to
  a (large, but finite) dimensional linear problem, which is amenable to an
  efficient solution via distributed computation.

\end{abstract}

\maketitle
\tableofcontents

\section{Introduction}

\subsection{The problem}

Let $\cG$ be a complex, simple, simply-connected Lie group of rank $r$ and $V \in \mathrm{Rep}_\bbC(\cG)$ an
element of its representation ring. We may view the latter, upon taking characters $V \to
\chi_V$, as the Weyl ring of Ad-invariant regular functions on $\cG$, or
equivalently, as the ring of $\cW(\cG)$-invariant regular functions
on the Cartan torus, where $\cW(\cG)$ is the Weyl group of $\cG$. It is a basic fact in Lie theory that this is a polyomial ring
over the integers, %the characters of the fundamental representations $V_{\omega_i} \in \mathrm{Rep}_\bbC(\cG)$,
%$i=1, \dots, r$,
%
%\beq
$\mathrm{Rep}_\bbC(\cG) \simeq \bbZ[\chi_{1}, \dots, \chi_{r}]$,
%\eeq
%
where $\chi_{j}$, $j=1, \dots, r$ denotes the character of
%the irreducible
%representation having as highest weight the
the $j^{\rm th}$ fundamental representation of
$\cG$. 
%Equivalently,  %
%, the isomorphism with $ \mathrm{Rep}_\bbC(\cG)$ being given by taking the trace,
%$\mathrm{Rep}_\bbC(\cG) \ni V \to \chi_V \triangleq \tr_V$.

In this paper we will be concerned with a special instance of the following
\begin{prob}
Given a finite-dimensional representation $V$ of $\cG$, find polynomials
$\mathfrak{p}_k^V\in \bbZ[x_1, \dots, x_r]$, $k=1, \dots, \mathrm{dim}_\bbC V$, such that
\beq
\det_V(g - \mu \mathbf{1})=\sum_k \mathfrak{p}_k^V(\chi_{1}(g), \dots,
\chi_{r}(g)) (-\mu)^{\mathrm{dim}V-k} \in \bbZ[\chi_{1}(g), \dots,
  \chi_{r}(g)][\mu].
\label{eq:cpol}
\eeq
for all $g\in\cG$ and $k=1,\dots,\mathrm{dim}_\bbC V$. Equivalently, given an arbitrary exterior power of $V$, determine 
the corresponding polynomial relations in $\mathrm{Rep}(\cG)$ of the form
\beq
0=
\chi_{\wedge^k V} - \mathfrak{p}_{k}^V (\chi_{1}, \dots, \chi_{r}).
\eeq
\label{prob:gen}
\end{prob}

In other words, \cref{prob:gen} asks to find explicit expressions for characteristic
polynomials in a given representation $V$ (alternatively, of antisymmetric
characters of $V$) in terms of polynomials in the
fundamental characters.
%\begin{example}
%\cref{prob:gen} is trivial for
For example, if  $\cG=\mathrm{SL}_N(\bbC)$ and $V=V_{\omega_1}$ is the defining
representation of $\mathrm{SL}_N(\bbC)$,
%. In this case,
we have simply
%\cref{prob:gen} is trivially
%solved by
\beq
\mathfrak{p}_k^V(\chi_{1}, \dots,
\chi_{r}) = \chi_{k}
\eeq
since
%the $k^{\rm th}$ fundamental representation is precisely
$V_{\omega_k}=\wedge^k V_{\omega_1}$.
%\end{example}

Let $\mathfrak{d}_0(V)$ denote the dimension of the zero-weight space of
$V$. A case of particular importance for applications is when 
the characteristic polynomial $\det_V(g - \mu \mathbf{1})$
(respectively: $\det_{V}(g - \mu
\mathbf{1})(1-\mu)^{-\mathfrak{d}_0(V)}$) in \eqref{eq:cpol} is irreducible over
$\mathrm{Rep}(\cG)$: this amounts to $V$ being a minuscule (respectively:
quasi-minuscule) irreducible representation. In the quasi-minuscule setting, \cref{prob:gen} is computationally
easy for most Dynkin types and quasi-minuscule representations (see \cite{Borot:2015fxa, Brini:2017gfi}), with one
single, egregious exception: this is $\cG=E_8$ and $V=\mathfrak{e}_8$,
%(the adjoint representation),
which is of
formidable complexity. The purpose of the present paper is 
to present a solution of this
exceptional case, which had previously been announced in
\cite[Appendix~A]{Brini:2017gfi}.
%This is achieved via a method that 
%has the virtue of being
%is amenable to an efficient distributed computer implementation.

%
\subsection{Motivation: six places of appearance of \cref{prob:gen}}

As it stands, \cref{prob:gen} is of purely representation theoretic character. At the same
time, my motivation for looking at it is mostly extrinsic in nature, and
it is eminently geometrical: there are indeed six different classes of
questions in Geometry and Mathematical Physics that are
simultaneously answered by giving an explicit solution to \cref{prob:gen}, and
in particular to the case when $(\cG, V)=(E_8, \mathfrak{e}_8)$, as follows.
\subsubsection{Integrable systems, spectral curves, and the relativistic Toda chain}
\label{sec:is}
Let %again $\cG$ be an arbitrary complex simple Lie group of rank $r$, $V$ a
%non-trivial representation, and 
$\cT$ denote the maximal torus of $\cG$. A central object in the theory of algebraically complete integrable
systems is the datum of a rational map
\beq
\LL : C_{\mathsf{g}} \times \cP \to \cG
\eeq
from the product of a $2r$-complex algebraic symplectic variety $\cP$ and a fixed smooth curve $C_{\mathsf{g}}$ with $h^{1,0}(C_{\mathsf{g}})=\mathsf{g}$ to $\cG$, called
  the {\it Lax map}. Associated to $\LL$ is the family of {\it spectral curves} $\mathscr{C}[\LL]$
%  given by the following diagram:
%
\beq
\xymatrix{ \Sigma_{u} \ar[d]  \ar@{^{(}->}[r]&
  \mathscr{C}[\LL] \ar[d]  \\
                     u  \ar@{^{(}->}[r]^{\rm pt} 
 &  
\mathscr{B} & 
}
\label{eq:diagsc}
\eeq
where
\bit
\item $\mathscr{B} = \chi_{\bullet}(\cT)$ is the image of the torus
  under the fundamental regular characters $\chi_{i}$, $i=1, \dots, r$;
  the natural co-ordinates $u$ on $\mathscr{B}$, $u_i \triangleq
  \chi_{i}(g)$ for $g \in \cG$, give a co-ordinate chart on $\cT$;
\item for fixed $u=(u_i=\chi_{i}(\LL))_i$, $\Sigma_u$ is the compact Riemann surface
\beq
\Sigma_u = \overline{\bbV\l(\det_V(\LL-\mu \mathbf{1})\r)}
\eeq
 given by the smooth completion (normalisation of the projective closure) of
 the algebraic curve in $\bbA^1 \times C_{\mathsf{g}}$ given by the vanishing locus of
 the characteristic polynomial of $\LL$ at fixed $u$. 
\eit
More explicitly,
\beq
\det_V(\LL-\mu \mathbf{1}) = \sum_{k=0}^{\dim_V}
(-\mu)^{\dim_V -k} \chi_{\wedge^k V} \LL \in \cM_{C_{\mathsf{g}}}[\mu; u_1,
  \dots, u_r]
\label{eq:specdet}
\eeq
where $\cM_{C_{\mathsf{g}}}$ is the ring of meromorphic functions on $C_{\mathsf{g}}$. A Hamiltonian
dynamical system can then be defined on $\cP$ in terms of isospectral flows on $\LL$;
in particular, any Ad-invariant function of $\LL$ is an integral of motion, and a
maximal involutive set of these is given by the fundamental characters
$u_i(\LL)$. For fixed $u$, the resulting flow is linear on the Picard group of
the irreducible components
$\Sigma_u$, and the dynamics is independent of $V$ so long as $V \neq
\mathbf{1}$ \cite{MR1397059, MR1013158, MR1401779, MR1668594}: so for simplicity, we may
assume $V$ to be minuscule (resp. quasi-minuscule for $\cG=E_8$), which entails
that $\Sigma_u$ (resp. the reduced component of $\Sigma_u$) is generically
irreducible.

A central example is given by the periodic relativistic Toda
chain of type $\cG$: in this case $C_{\mathsf{g}}\simeq \bbP^1$, and
in an affine co-ordinate $\lambda$ on
the base $\bbP^1$ the Lax map satisfies \cite{Borot:2015fxa,Brini:2017gfi}
\beq
\de_\lambda
u_i(\LL)= \delta_{i i_{\rm top}} \l(1-\frac{1}{\lambda^2}\r),
\label{eq:uilambda}
\eeq
 where
$V_{i_{\rm top}}$ is the top dimensional fundamental representation. This
 means that
\eqref{eq:specdet}
becomes
\beq
\det_V(\LL-\mu \mathbf{1}) = \sum_{k=0}^{\dim V}
(-\mu)^{\dim V -k} \chi_{\wedge^kV} \LL \in \bbZ[\mu, \lambda^{\pm}; u_1,
  \dots, u_r]
\eeq
where the last step requires expanding $\chi_{\wedge^kV}$ as a polynomial
in $\chi_{i}$ from the solution of \cref{prob:gen}, and using \eqref{eq:uilambda}. A complete presentation for the family of
spectral curves \eqref{eq:diagsc}, and the complete solution for the dynamics of
the underlying integrable model, follows thus from solving \cref{prob:gen} for
the given pair $(\cG,V)$.

\subsubsection{Gauge theory I : Seiberg--Witten theory}
\label{sec:sw}
The Toda systems of the previous section have a central place in the study of
supersymmetric quantum field theories \cite{Gorsky:1995zq,Martinec:1995by,Nekrasov:1996cz}.
In particular, constructing explicitly the family of spectral curves \eqref{eq:diagsc}
encodes the solution of the low energy effective dynamics for $\cN=1$
supersymmetric gauge theories with no hypermultiplets on $\bbR^{1,3} \times S^1$: in this setting, the
fundamental characters $u_i$ are the semiclassical, Weyl-invariant gauge
parameters and the full effective
action up to two derivatives in the supercurvature of the gauge field,
including all-order instanton corrections, is recovered via period integrals
of $\log \mu \rd \log \lambda$ on $\Sigma_u$. I refer the the reader to \cite{Borot:2015fxa, Brini:2017gfi, Nekrasov:1996cz} for a fuller discussion of the link between the
relativistic Toda chain and this class of five-dimensional quantum field
theories.

The case $\cG=E_8$ has been
outstanding since the solution of the Wilsonian dynamics of the theory was
proposed shortly after the celebrated work of Seiberg--Witten
\cite{Seiberg:1994rs, Martinec:1995by, Nekrasov:1996cz}; the same considerations for the relativistic
Toda chain recast this problem into solving \cref{prob:gen} for $(E_8,
\mathfrak{e}_8)$. 

\subsubsection{Gauge theory II: Chern--Simons theory}
\label{sec:cs}
Let $\mathfrak{g}$ be a simple complex Lie
algebra $\mathfrak{g}$ of type $A_n$, $D_n$ or $E_n$.
%($\mathfrak{g}=\mathfrak{sl}_\bbC(n+1)$), $D_n$ ($\mathfrak{g}=\mathfrak{spin}_\bbC(2n)$),
%or $\mathfrak{e}_{6,7,8}$.
Let $V_{\mathfrak{g}}$ be the defining
representation for $\mathfrak{g}=\mathfrak{sl}(n+1)$, the vector representation for
$\mathfrak{g}=\mathfrak{so}(2n)$, and the $\mathbf{27}_{E_6}$, $\mathbf{56}_{E_7}$ and
$\mathbf{248}_{E_8}= \mathfrak{e}_8$ for $\mathfrak{g}=\mathfrak{e}_{6,7,8}$
respectively, and  denote by $\Gamma_{\mathfrak{g}}
< SU(2)$ the finite order subgroup of $SU(2)$ of the same Dynkin type of $\mathfrak{g}$
associated to 
$\mathfrak{g}$ by the classical McKay correspondence. Considerations about
large $N$ duality in gauge theory led \cite{Borot:2015fxa, Brini:2017gfi} to
propose that the family of curves 
\eqref{eq:diagsc} for $(\cG, V)=(\exp(\mathfrak{g}),V_{\mathfrak{g}})$ determines  the asymptotics of the
Witten--Reshetikhin--Turaev invariant
%(the $U(N)$ Chern--Simons partition
%function)
of spherical 3-space forms $M_{\mathfrak{g}} = S^3/\Gamma_{\mathfrak{g}}$ via the Chekhov--Eynard--Orantin topological
recursion \cite{Eynard:2004mh, Chekhov:2005rr, Eynard:2007kz}.
%: these have a classification in
%terms of Dynkin diagrams of type ADE .
In
particular, the case $\Gamma_{\mathfrak{e}_8}=\mathbb{I}_{120}$ being the binary icosahedral
group gives the distinguished case of the Poincar\'e
integral homology sphere $M_{\mathfrak{e}_8} = \Sigma(2,3,5)$. The all-order
asymptotic expansion of the quantum invariants of the Poincar\'e sphere is
fully determined by periods of $\log\mu \rd\log \lambda$ and the topological
recursion on the spectral curves \eqref{eq:diagsc} for the
$\cG$-relativistic Toda chain  -- and therefore, ultimately, by solving \cref{prob:gen} for
$(\cG,V)=(E_8, \mathfrak{e}_8)$.

\subsubsection{Algebraic enumerative geometry}
Let $\cO_{\bbP^1}(-1)$ denote the tautological bundle on the complex
projective line, and let $X_{\mathfrak{g}} \triangleq [\cO(-1)^{\oplus 2}_{\bbP^1}/\Gamma_{\mathfrak{g}}]$
be the fibrewise quotient stack of $\mathrm{Tot}(\cO(-1) \oplus \cO(-1) \to \bbP^1)$ by the
action of the finite group $\Gamma_{\mathfrak{g}}$ as in \cref{sec:cs}. The {\it Gromov--Witten
potential} of $X_{\mathfrak{g}}$ is a formal generating series of virtual counts of
twisted stable curves to the orbifold $X_{\mathfrak{g}}$, to all genera, degrees, and
insertions of twisted cohomology classes \cite{MR2450211}. It was proposed and
non-trivially checked in \cite{Borot:2015fxa, Brini:2017gfi}
that the full Gromov--Witten potential of $X_{\mathfrak{g}}$ coincides with the
Chern--Simons partition function of $M_{\mathfrak{g}}$; therefore the full curve
counting information on $X_{\mathfrak{g}}$ is encoded into \eqref{eq:diagsc}, and
solved by \cref{prob:gen}, as an instance of 1-dimensional mirror symmetry for
Calabi--Yau manifolds.

\subsubsection{Orbits of affine Weyl groups and Frobenius manifolds}
\label{sec:polfrob}
A suitable degeneration\footnote{This is essentially given by a suitably defined $\lambda \to
  \infty$ limit of \eqref{eq:uilambda}.} of the family \eqref{eq:diagsc} was proved in
\cite{Brini:2017gfi} to provide a new 1-dimensional mirror for the orbifold quantum co-homology of the
orbifold projective line $\bbP^1_{\mathfrak{g}} = [\bbC^* \GITl{} \bbC^2 \GIT{} \Gamma_{\mathfrak{g}}
]$, or, equivalently \cite{MR2672302}, to the Frobenius manifold structure on the orbits of the
affine Weyl group of the same ADE type of $\Gamma_{\mathfrak{g}}$ \cite{MR1606165}. 
 This allowed \cite{Brini:2017gfi} to solve the long-standing problem
 \cite{MR1606165} of determining flat
co-ordinates for the Saito metric for all ADE types, and gave a higher genus reconstruction
theorem by the Chekhov--Eynard--Orantin topological recursion as a bonus. Once
more, the central tool in the theorem is the closed-form presentation of the
family of spectral curves \eqref{eq:diagsc}, and therefore, the solution of
\cref{prob:gen} in all ADE types; particularly, $(\cG,V)=(E_8, \mathfrak{e}_8)$.

\subsubsection{$\bbQ$-extensions with Galois group $\cW(E_8)$}
\label{sec:galois}
A subject of particular interest in arithmetic geometry is
the construction of extensions of the rationals with Galois group equal to the
full Weyl group of $E_8$ \cite{MR1123543,MR2559115}. By a theorem of
Shioda \cite[Thm~.7.2]{MR1123543}, the Galois action on the extension arises
from the action of the full Weyl group of $E_8$ on the Mordell--Weil lattice of a non-isotrivial elliptic
curve $E$ over $\bbQ(t)$, and in turn, from the datum of a degree 240, monic
integer polynomial $\Psi \in \bbZ[x]$ whose splitting field has
$\cW(E_8)$ as Galois group: a few explicit instances of these
polynomials were constructed in \cite{MR3117502, MR2561902, MR2523316}. Solving
\cref{prob:gen} for $(\cG,V)=(E_8, \mathfrak{e}_8)$ gives manifestly a
$\bbZ^8$-worth of candidate such integer polynomials\footnote{The path followed
in \cite{MR2523316} to find one such example hinges on constructing such a
polynomial precisely as an adjoint characteristic polynomial for $E_8^\bbC$, upon factoring out the
contribution of zero roots.} upon specialising $\chi_{i} \in \bbZ$: and
for generic integral values \cite{MR3117502}, the resulting polynomial has the full Weyl group of $E_8$
as Galois group\footnote{For sufficient conditions
  for this to happen, see e.g. \cite[Lemma~3.2]{MR2523316}.}. The solution of \cref{prob:gen}
generates in this way an infinite wealth of examples of multiplicatively excellent
families of elliptic surfaces (in the sense of \cite{MR3117502}) of type $E_8$.

\subsection{Sketch of the solution}

I briefly describe here the strategy employed to solve \cref{prob:gen} for
$(\cG,V)=(E_8, \mathfrak{e}_8)$, which was announced in my previous work \cite[Appendix~C]{Brini:2017gfi}.
In its crudest terms, what we would like to achieve amounts to enforcing
\beq
\chi_{\wedge^k \mathfrak{e}_8} = \mathfrak{p}_k(\chi_{1}, \dots, \chi_{8}) \in \bbZ[\chi_{1},
  \dots, \chi_{8}]
\label{eq:chikpk}
\eeq
for an unknown polynomial $\mathfrak{p}_k$ as an identity of regular functions on the Cartan torus
%$\cT=\exp(\mathfrak{h})$
-- that is, as an identity between integral Laurent
polynomials. However, a direct calculation of $\mathfrak{p}_k$ in this vein is unviable
because of the exceptional complexity of $E_8$, even for small values of
$k$. We
%however
find a
workaround that breaks up the task of determining $\mathfrak{p}_k$ in
\eqref{eq:chikpk} into three main
steps:
%, which together allow to provide a computer-based solution of \cref{prob:gen}.
\begin{description}
\item[Casimir bound and finite-dimensional reduction]
%\ben
  %\item
  An {\it a priori} bound on the number of
  monomials appearing in $\mathfrak{p}_k$ in \eqref{eq:chikpk} holds. The set of allowed monomials
%\beq
%\mathfrak{I} = \l\{\iota \in (\bbZ_+)^8 \bigg| \sum_j i_j \omega_j
%< 2\rho \r\}
%\eeq
has cardinality $\approx 10^6$, and \eqref{eq:chikpk} reduces then to a finite-dimensional linear problem of
  the same rank upon taking sufficiently many numerical
  ``sampling points'' $g \in \cT$ and evaluating $\chi_{\wedge^k \mathfrak{e}_8}(g)$ and
  $\chi_{i}(g)$ at $g$. %In particular, no characters of a representation with
%  highest weight $\mu \npreceq 2 \rho$
%  dimension equal or greater than 
%
%\beq
%D_{\rm max}= 248 \times \prod_{i=1}^8 (\dim V_i)^2 \approx 1.25734\times 10^{96}
%\eeq
%
%can appear as a summand with non-zero coefficient on the r.h.s. of
%\eqref{eq:chikpk}.
\item[Partition of the monomial set]
  %The set of allowed monomials
%\beq
%\mathfrak{I} = \l\{\iota \in (\bbZ_+)^8 \bigg| \sum_j i_j \omega_j
%< 2\rho \r\}
%\eeq
%has cardinality $\approx 10^6$, and \eqref{eq:chikpk} reduces to a finite-dimensional linear problem of
%  the same rank: we can now just take sufficiently many numerical
%  sampling points $g \in \cT$, evaluate $\chi_{\wedge^k \mathfrak{e}_8}(g)$ and
%  $\chi_i(g)$, and solve the corresponding numerical linear system.  While
%  reducing \cref{prob:gen} to a finite-dimensional linear problem is a marked improvement, its solution
  %  is still out of the reach of a direct calculation:
  %  Generically,
  For generic sampling sets,
  the resulting linear problem is size-wise three orders of
magnitude beyond the reach of practical calculations.  There exist however special choices of the sampling
  set such that the original linear problem is equivalent to $\approx 4\times
  10^3$ linear sub-systems of size varying from one to $\approx 3\times  10^3$. This can be realised by constructing the sampling set numerically
  using Newton--Raphson inversion, evaluating derivatives of characters in
  exponentiated linear co-ordinates on the torus up to sufficiently high order, and
  establishing rigorous analytic bounds to perform an exact integer rounding.

\item[Partition of the sampling set and distributed computing] The problem can
  then be solved effectively on a computer: most of the
  runtime comes from the construction of the linear subsystems, by carrying
  out the generation of the sampling set and
  the evaluation of derivatives of $\chi_{i}$ and $\chi_{\wedge^k
    \mathfrak{e}_8}$, and it is in the order of a few years. On the other hand the  calculation can be effectively
  parallelised by a suitable segmentation of the sampling set, and subsequent distribution among
 different processor cores:
%
%  But crucially,
%the linear sub-systems from the partition can be solved independently of
%  each other, and
%  more importantly the bulk of the runtime (over 99\%) is given by the Newton--Raphson sampling and subsequent computation of
%  $\chi_i$-derivatives of $\chi_{\wedge^k \mathfrak{e}_8}$ at different
%  sampling points. Since calculations at one sampling point are
%  independent from those at a different sampling point, these can be 
%  distributed among $N$ CPU cores simply by
  %  segmentation of the sampling set.
 this allowed to reduce in our computer implementation the total absolute clock-time 
taken by the entire computation to about six weeks on a small departmental cluster. The
  result is available as a computer package at
  \begin{center}
    \large
  \texttt{http://tiny.cc/E8Char} .
  \end{center}
  and a description of the individual files is given in \cref{sec:auxfiles}.
\end{description}

The last two points indicate quite clearly that this project had a very substantial computational
component to it. I discuss rather diffusely the details of its concrete implementation in
\cref{sec:compute,sec:computeapp}.

%The second step has an important numerical component, as the creation of the
%sampling set heavily relies in finding roots of polynomial systems of the type
%
%\beq
%\chi_i(g)= u_i, \quad u_i \in \bbQ.
%\eeq
%
%A major issue in the creation of the sampling set relevant for the third and
%second step will be to ensure that the corresponding $\chi_i$-derivatives of the
%characters at a Cartan torus point specified by $(u_i)_i$ can be rounded up
%reliably to their exact rational value. Our code uses arbitrary precision
%floating point libraries for complex arithmetics \cite{gmp,mpfr,mpc, flint},
%to achieve that: computational aspects are discussed more diffusely in \cref{sec:comp}.

\subsection{Organisation of the paper}

The paper is organised as follows. In \cref{sec:extchar} we review the problem
and explain the Casimir bound that reduces \cref{prob:gen} to a
finite-dimensional, generically dense linear problem of rank $\approx 10^6$; the
Section ends with the central statement (\cref{thm:mainthm}) that suitable choices of
sampling sets lead to a block-reduction of this linear
problem to $\approx 4\times 10^3$ linear sub-systems %$\mathfrak{L}_\phi^\aleph$
over the
rationals by suitable partitions of the
set of admissible monomials; both the reduction and
the solution of these linear problems can be performed effectively by parallel
computation. This statement is justified in \cref{sec:computeapp}, which
occupies the main body of the paper: after reviewing general computational
strategies in \cref{sec:4paths}, I describe the
partition of the original problem in \cref{sec:partition}, and the exact computation
of the reduced linear problems
%$\mathfrak{L}_\phi^\aleph$
using semi-numerical methods and analytic bounds
for exact integer roundings in \cref{sec:complphi,sec:compbphi,sec:solvext}; additional details of
the computer implementation are given in \cref{sec:impl}. Finally, \cref{sec:appl}
contains some applications to the construction of explicit integral
polynomials with Galois group $\mathrm{Weyl}(E_8)$; the reader is referred to
\cite{Brini:2017gfi} for further applications of the results obtained here to
integrable systems ($E_8$ relativistic Toda lattices), $E_8$ Seiberg-Witten
theory, quantum invariants of the Poincar\'e sphere, and Frobenius manifolds
on orbits of the affine Weyl group of type $E_8$. Use and abuse of notation throughout the text will be in accordance with \cref{tab:notation}.

\begin{table}[!h]
\begin{tabular}{|c|c|}
\hline
$E_8$ & \makecell{The Dynkin type specified by the diagram of
  \cref{fig:dynke8} \\ The complex Lie group of the same Dynkin type}  \\
\hline
$\mathfrak{e}_8$ & \makecell{The Lie algebra of type $E_8$ \\The adjoint
  representation $\mathrm{Ad}_{E_8}$ of $E_8$} \\
\hline
$\mathfrak{h}_{\mathfrak{e}_8}$ & The Cartan subalgebra of $\mathfrak{e}_8$ \\
\hline
$\cT_{E_8}$ & The Cartan torus $\cT_{E_8}=\exp(\mathfrak{h}_{\mathfrak{e}_8})$ of $E_8$ \\
\hline
$g$ & A regular element of $E_8$ \\
\hline
$\cW(E_8)$ & The Weyl group $N_{\cT_{E_8}}/\cT_{E_8}$ \\
\hline
$h$ & A regular element of $\mathfrak{h}_{\mathfrak{e}_8}$\\
(resp. $\exp(h)$) & (resp. an element of a Weyl orbit in $\cT_{E_8}$ conjugate
to a regular $g\in E_8$) \\
\hline
$\Pi=\{\a_1, \dots, \a_8\}$ & The set of simple roots of $\mathfrak{e}_8$\\
\hline
$\Omega=\{\omega_1, \dots, \omega_8\}$ & The set of fundamental weights of $\mathfrak{e}_8$  \\
\hline
$\Delta$ (resp. $(\Delta^\pm)$) & The set of all (resp. positive/negative) roots of $\mathfrak{e}_8$\\
\hline
$V_\omega$ & The irreducible representation with highest weight $\omega$ \\
\hline
$V_i$ & The $i^{\rm th}$ fundamental representation, $i=1, \dots, 8$ \\
\hline
$\Gamma(V)$  & The weight system  of the representation $V$ \\
(resp. $\Gamma_{\rm red}(V)$) & (resp. the
weight system without multiplicities)\\
\hline
$\chi_\omega$ & The formal character of  $V_\omega$  \\
\hline
$\chi_i$ & The formal character of $V_{i}$  \\
\hline
$\rho$ & The Weyl vector of $\mathfrak{e}_8$  \\
\hline
$(t_1, \dots, t_8)$ & Linear co-ordinates on the Cartan subalgebra of $\mathfrak{e}_8$ w.r.t. the
co-root basis $\Pi^*$ \\
\hline
$(Q_1, \dots, Q_8)$ & $(\exp (t_1), \dots, \exp (t_8))$\\
\hline
$D^c$ & $\frac{\de^{|c|}}{\de Q_1^{c_1} \dots \de Q_8^{c_8}}$ \\
\hline
$\mathfrak{D}^c$ & $\frac{\de^{|c|}}{\de \chi_1^{c_1} \dots \de \chi_8^{c_8}}$ \\
\hline
$|v\in \bbZ^8|$ & $\sum_{i=1}^8 |v_i|$ \\
\hline
$C_{E_8}$ & The Cartan matrix of $E_8$ \\
\hline
\end{tabular}
\caption{Notation employed throughout the text.}
\label{tab:notation}
\end{table}

\subsection*{Acknowledgements}
I am grateful to G.~Borot and A.~Klemm for useful discussions on related topics, and to
A.~D'Andrea, C.~Bonnaf\'e, M.~Coti~Zelati, D.~Craven, S.~Goodwin and T.~Weigel for taking the time to answer my
questions at various stages of progress of this work. I am indebted to
F.~David--Collin and especially
B.~Chapuisat and A.~Thomas for patiently dealing with my incessant requests
for libraries and resources to be installed, respectively, on the
\texttt{Omega} departmental cluster at the Institut Montpelli\'erain Alexander
Grothendieck in Montpellier, and the Maths Compute Cluster at Imperial College
London, where most of the debugging, tests, and parallel calculations leading up to the results of
this work were carried out. Partial computing support
from the interfaculty HPC@LR computing cluster \texttt{Muse} at the University
of Montpellier is also acknowledged. I made extensive
use of the GNU~C~libraries GMP, MPFR, and MPC \cite{gmp, mpfr, mpc} for the
use of arbitrary precision floating-point arithmetics over the complex numbers, and of
the FLINT~C~library \cite{flint} for multi-precision arithmetics over the
integers and the rationals; C~source code for all calculations is available upon request. This research
was partially supported by the ERC Grant no. 682603 (PI:
T. Coates) and the EPSRC Fellowship grant EP/S003657/1.

\begin{figure}[t]
\input{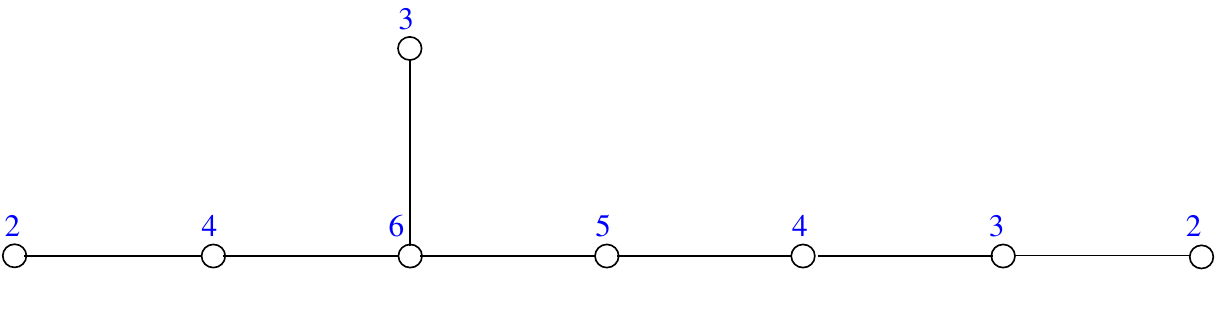_t}
\caption{The Dynkin diagram of type $\mathrm{E}_8$. The numbers in blue are
  the components of the highest root in the $\alpha$-basis.}
\label{fig:dynke8}
\end{figure}

\section{Exterior character relations}
\label{sec:extchar}

\subsection{Introduction}

We start by recasting \cref{prob:gen} in the following form. The cofactor expansion of the characteristic polynomial reduces
the problem to finding polynomials $\mathfrak{p}_k \in \bbZ[\chi_{1}, \dots, \chi_{8}]$
such that 
\beq
\chi_{\wedge^k \mathfrak{e}_8} = \mathfrak{p}_k(\chi_{1}, \dots, \chi_{8}), 
\label{eq:chikpk2}
\eeq
as in  \eqref{eq:chikpk}.
\begin{defn} Let $V \in \mathrm{Rep}(\cG)$. We call the presentation of $\chi_V$ as a polynomial in the
  generators $\chi_{k}$ of the representation ring the {\rm polynomial
    character decomposition} for $\chi_V$.
\end{defn}

Computing the polynomial character decomposition of $\chi_{\wedge^k
  \mathfrak{e}_8}$ can be simplified somewhat for large $k$. Since the adjoint representation is real, we have
$\chi_{\wedge^k \mathfrak{e}_8}=\chi_{\wedge^{248-k} \mathfrak{e}_8}$, meaning we
can restrict ourselves to $k\leq 124$. Moreover, the characteristic polynomial in any
representation factors as a product over irreducible factors corresponding to
Weyl orbits: in the case of the adjoint, we have
\beq
\det_{\mathfrak{e}_8}(g-\mu \mathbf{1})= (\mu-1)^8 \mathfrak{Q}_{240}(\mu; g)
\label{eq:q240}
\eeq
for a degree 240 polynomial  $\mathfrak{Q}_{240}(\mu;
g)$, corresponding to the contribution of non-zero roots. Then \eqref{eq:q240}
imposes polynomial (in fact linear) relations in $\mathrm{Rep}(E_8)$,
\beq
\chi_{\wedge^j \mathfrak{e}_8} \in \bbQ[\chi_{\mathfrak{e}_8, \wedge^2
    \mathfrak{e}_8, \dots, \wedge^{120} \mathfrak{e}_8}], \quad j=121, \dots,
124,
\label{eq:124to120}
\eeq
meaning that in \eqref{eq:chikpk2} we only\footnote{It is natural to ask whether we could strengthen \eqref{eq:124to120} for
$k<120$ (and possibly all the way
down to $k=9$), i.e. whether
\beq
\chi_{\wedge^j \mathfrak{e}_8} \in \bbQ[\chi_{\mathfrak{e}_8, \wedge^2
    \mathfrak{e}_8, \dots, \wedge^{8} \mathfrak{e}_8}], \quad j=9, \dots,
124,
%\label{eq:124to120}
\eeq
I am unable to conclude from the structure of the root system of $E_8$, nor do I in fact expect, that such a statement holds.} need to worry about $k\leq 120$. 
We can
then equivalently phrase \cref{prob:gen} for $(E_8, \mathfrak{e}_8)$ as follows.

\begin{prob}
For all $1\leq k \leq 120$ and $\iota \in \bbN_0^8$, determine $N^{(k)}_{\iota} \in \bbZ$ such
that
\beq
\chi_{\wedge^k \mathfrak{e}_8} = \sum_{\iota} N^{(k)}_{\iota} \prod_{l=1}^8 \chi_{l}^{\iota_l}.
\label{eq:e8num}
\eeq
\label{prob:e8num}
\end{prob}
There is in principle a systematic recursive method to determine
$N^{(k)}_{\iota}$, as follows. Note that $\wedge^k \mathfrak{e}_8$
is highly reducible as soon as $k>1$; let
\beq
\wedge^k \mathfrak{e}_8 = \bigoplus_{i=1}^{f_k} \mathtt{m}^{(k)}_i
V_{\sum_l j^{(k)}_{i,l} \omega_l}
\label{eq:redgen}
\eeq
be its decomposition into irreducible summands, with $\mathtt{m}^{(k)}_i, f_k,
j^{(k)}_{i,l} \in \bbN$.
%
%is an 8-tuple of non-negative integers,  
%and a subscript $(a_1, \dots, a_8)$ refers to the dominant weight $\omega_{a_1, \dots, a_8}$, with co-ordinates 
%$(a_1, \dots, a_8)$
%in  the fundamental weight basis, for the corresponding irreducible
%summand.
Then \cref{prob:e8num} can be solved in two steps: by determining
first the multiplicities $\mathtt{m}^{(k)}_i$, and then solving
\cref{prob:e8num} for each irreducible summand in \eqref{eq:redgen}. This last step can be
achieved recursively by computing, for a dominant weight $\varpi=\sum_k \varpi_k
\omega_{k}$, the product
\beq
V_{\varpi-\omega_{\bar k}} \otimes
V_{\omega_{\bar k}} = V_{\varpi} \oplus  \bigoplus_{b=1}^{f_{\varpi}}
\mathtt{m}^{\varpi}_i V_{\sum_l j^{\varpi}_{i,l}\omega_l}
\label{eq:lrdec}
\eeq
where $\mathtt{m}^{(\varpi)}_i, f_\varpi,
j^{\varpi}_{i,l} \in \bbN$ with notation adapted from \eqref{eq:redgen}, and
$\bar k$ is an index labelling the non-vanishing component of $\varpi$ in the direction of a
fundamental weight with weight system of minimal cardinality:
\beq
\bar k \in I(\varpi) := \{k=1, \dots, 8 | \varpi_k \neq 0\} \quad \mathrm{s.t.} \quad  \dim{V_{\omega_{\bar  k}}} = \mathrm{min}_{j \in I(\varpi)} \dim{V_{\omega_j}}.
\eeq
%
%
%  is minimal in
%$\{k=1, \dots, 8 | a_k \neq 0 \}$,
This is easily shown to be a recursion
with induction data given by the fundamental weights and the zero weight.
\begin{example}
Take $k=3$. We first decompose $\wedge^3 \mathfrak{e}_8$ into irreducibles to get
\beq
\wedge^3 \mathfrak{e}_8= V_{0} +
V_{\omega_5} + V_{\omega_6} + V_{2\omega_7}
+ V_{\omega_1}
\eeq
Upon taking characters, we get
\beq
\chi_{\wedge^3 \mathfrak{e}_8}= 1 +
\chi_{5} + \chi_{6} + \chi_{2\omega_7}
+ \chi_{1}
\label{eq:decred3}
\eeq
so the only bit we need to compute is the polynomial decomposition of $\chi_{2
  \omega_7}$ in the tensor algebra. In this case we have only one
non-vanishing component of $\varpi :=2 \omega_7$, and we can compute
explicitly from \eqref{eq:lrdec} that
\beq
V_{\varpi-\omega_7} \otimes V_{\omega_7} =
V_{\omega_7} \otimes V_{\omega_7} =
 V_{2\omega_7} \oplus V_{0} \oplus V_{\omega_7} \oplus  V_{\omega_1} \oplus V_{\omega_6}.
\label{eq:lr3}
\eeq
Combining \eqref{eq:decred3} and \eqref{eq:lr3} and taking characters gives
\beq
\chi_{\wedge^3 \mathfrak{e}_8}= \chi_{5} -\chi_{7} +\chi_{7}^2.
\eeq
\label{ex:dec}
\end{example}
%
%\begin{rmk}
While this example is relatively simple, carrying out the same procedure for
$k$ all the way up to $k=120$ is unfeasible, both for the growth in complexity of the tensor product decompositions in
\eqref{eq:lrdec}  and the recursive procedure outlined above as $k$ grows, 
%, and even cases as basic as
%$V_{\omega_3} \otimes V_{\omega_3}$ take up hundreds
%of Gigabytes of RAM to perform on a computer. Furthermore,
and especially because of the difficulty in computing explicitly the multiplicities of plethysms
such as \eqref{eq:decred3} for all $k$, which is a hard problem in its own right (and
one that is 
unsolved to date, see \cite{MR1822681,MR0167567,MR2069810,MR1437204}
for an account).
%\end{rmk}

So we need to circumvent both problems, and possibly, we need to do so in a way
that works uniformly with respect to the exterior exponent $k$. I will
describe in the next section four alternative methods to accomplish this, each having its upsides and
shortcomings.

\subsection{Character decompositions and an a priori bound}

  I will start by establishing sufficient vanishing conditions
for the numbers $N^{(k)}_{\iota}$ in \cref{prob:e8num}. Let
\beq
e_{jk} := \sum_j (C_{E_8}^{-1})_{jk}
\eeq
be the $k^{\rm th}$ component of $\omega_j$ in the root basis. Notice that
this is always a positive integer, since $E_8$ has trivial centre.
\begin{defn}
  We call the finite set
  \beq
  \mathfrak{I}:= \l\{\iota \in (\bbZ_+)^8 \bigg| \sum_j \iota_j
  e_{jk} \leq 2 \sum_j e_{jk} \r\}
  \label{eq:admexp}
  \eeq
the {\rm set of admissible exponents} of the exterior algebra $\wedge \mathfrak{e}_8$.
\end{defn}
%
%We
%have the following
%
\begin{lem}
 We have that 
 \beq
 \iota \notin \mathfrak{I} \Rightarrow 
N^{(k)}_{\iota}=0 \quad \forall k.
\label{eq:condmon}
\eeq
\label{lem:vanish}
\end{lem}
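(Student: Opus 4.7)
The strategy is to recognise $\mathfrak{I}$ as a dominance bound on the weight lattice and then exploit the unitriangular change of basis from monomials in the fundamental characters to irreducible characters. I expect the only genuinely content-bearing step to be a uniform weight bound for $\wedge^k\mathfrak{e}_8$; it is elementary but requires a small case split around $k=128$.

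First, since $\omega_j = \sum_k e_{jk}\alpha_k$ by definition of $e_{jk}$, and $2\rho = 2\sum_j\omega_j$ has root-basis expansion $\sum_k\bigl(2\sum_j e_{jk}\bigr)\alpha_k$, the inequalities defining $\mathfrak{I}$ read component-wise in the simple-root basis and are equivalent to the dominance bound
\beq
\varpi(\iota) := \sum_{l=1}^8 \iota_l \omega_l \;\preceq\; 2\rho,
\eeq
where $\preceq$ denotes the usual partial order on the weight lattice ($\mu \preceq \nu$ iff $\nu - \mu$ lies in the non-negative integer cone on the simple roots). Thus $\iota \in \mathfrak{I}$ iff $\varpi(\iota)\preceq 2\rho$.

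Next, I would show that every irreducible summand $V_\omega \subset \wedge^k\mathfrak{e}_8$ has highest weight $\omega \preceq 2\rho$, uniformly in $k$. Each weight of $\wedge^k\mathfrak{e}_8$ is obtained by summing $k$ elements of the weight multiset $\Gamma(\mathfrak{e}_8) = \Delta \sqcup \{0^{\oplus 8}\}$, each picked at most with its multiplicity. Since $\sum_{\alpha \in \Delta^+}\alpha = 2\rho$ and positive roots have non-negative root-basis components, any selection of size at most $|\Delta^+|+8 = 128$ produces a weight $\preceq 2\rho$; for $k > 128$ one is forced to include at least $k-128$ negative roots, which only lowers root-basis components further. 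Hence $\chi_{\wedge^k\mathfrak{e}_8}$ expands in the irreducible character basis with support inside $\{\omega : \omega \preceq 2\rho\}$.

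Finally, the standard tensor-product decomposition
\beq
V_1^{\otimes \iota_1} \otimes \cdots \otimes V_8^{\otimes \iota_8} = V_{\varpi(\iota)} \;\oplus\; \bigoplus_{\mu \prec \varpi(\iota)} m_\mu\, V_\mu
\eeq
shows that the transition matrix between the monomial basis $\{\prod_l \chi_l^{\iota_l}\}_\iota$ and the irreducible character basis $\{\chi_{\varpi(\iota)}\}_\iota$ is unitriangular with respect to dominance order, and hence so is its inverse. Consequently, a non-vanishing coefficient $N^{(k)}_\iota$ in the monomial expansion of $\chi_{\wedge^k\mathfrak{e}_8}$ forces $\varpi(\iota)$ to lie in the support of $\chi_{\wedge^k\mathfrak{e}_8}$ in the character basis, which by the previous paragraph forces $\varpi(\iota)\preceq 2\rho$, i.e.\ $\iota \in \mathfrak{I}$. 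This is the contrapositive of \eqref{eq:condmon}.
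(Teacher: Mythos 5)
Your proposal is correct, and its skeleton is the same as the paper's: first bound every weight of $\wedge^k\mathfrak{e}_8$ by $2\rho$ in dominance order, then use triangularity with respect to $\preceq$ to kill all monomials $\prod_l\chi_l^{\iota_l}$ with $\sum_l\iota_l\omega_l\npreceq 2\rho$, and finally observe that rewriting $\sum_l\iota_l\omega_l\preceq 2\rho$ in the root basis is exactly the defining inequality of $\mathfrak{I}$. Where you differ is in how the second step is executed: the paper runs an induction on the dimension of the virtual summands $\bigotimes_l V_{\omega_l}^{\otimes\iota_l}$ in \eqref{eq:e8numrep}, using the auxiliary map $\mathfrak{f}$ (injectivity of the prime factorisation of $\prod_l\dim V_{\omega_l}^{\iota_l}$) to separate summands, whereas you invoke the standard fact that the transition matrix from monomials in the fundamental characters to irreducible characters is unitriangular for the dominance order, hence so is its inverse. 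Your route is cleaner and avoids the paper's bookkeeping; it also gives the weight bound uniformly in $k$ (the paper anchors it at the $k=120$ case), though your case split at $k=128$ is unnecessary, since for a selection of any size the chosen positive roots already sum to something $\preceq 2\rho$ and zero weights and negative roots only lower the root-basis components. One small imprecision: a non-vanishing $N^{(k)}_\iota$ does not force $\varpi(\iota)$ to lie \emph{in} the support of $\chi_{\wedge^k\mathfrak{e}_8}$ in the irreducible basis, only in its dominance-downward closure ($\varpi(\iota)\preceq\omega$ for some $\omega$ in the support); since that closure is still contained in $\{\omega\preceq 2\rho\}$ by transitivity, the conclusion is unaffected.
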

\begin{proof}
 By definition of the exterior power, weights in $\Gamma(\wedge^k \mathfrak{e}_8)$ have the form $\mu_{i_1 \dots i_k}=\sum_{j=1}^k
 \alpha_{i_j}$ with $i_j \neq i_l$ for $j\neq l$. When $k=120$, there is a
unique element in the weight module with $i_j>0$ $\forall j=1, \dots, 120$,
which is precisely $2\rho=\sum_{\a \in \Delta^+} \alpha$. Therefore, for any
$\mu \in \Gamma(\wedge^{120}\mathfrak{e}_8)$, we have that $2\rho-\mu
= \sum_{i=1}^8 n_i \alpha_i$ with all $n_i \in \bbZ_{\geq 0}$: equivalently,
for any $\mu$ in $\Gamma(\wedge^k \mathfrak{e}_8)$, we must have
$\mu \preceq 2 \rho$, where $\lambda \preceq \mu$ is the usual partial order
on the weights indicating that  $\mu-\lambda$ is a non-negative integral linear
combination of the simple roots. Consider now the representation space version of \eqref{eq:e8num},
\beq
\wedge^k \mathfrak{e}_8 = \bigoplus_{\iota} N^{(k)}_{\iota} \bigotimes_{l=1}^8 V_{\omega_l}^{\iota_l},
\label{eq:e8numrep}
\eeq
and consider the subset of indices $\iota_j\in \bbN_0$ such that 
$\sum_{j}\iota_j \omega_j \npreceq 2 \rho$. Since the corresponding
one-dimensional weight space must appear with zero coefficient on the
l.h.s. of \eqref{eq:e8numrep}, the weighted occurences of this weight space 
must sum up with total coefficient equal to zero on its r.h.s..  Consider now
the $\bbZ$-linear map 
\bea
\mathfrak{f} : \bbN^8 & \longrightarrow & \bbN^{10} \nn \\
\{\iota_j\}_{j=1}^8 & \longrightarrow & \{k_l\}_{l=1}^{10}
\eea
induced by the factorisation
\beq
\prod_j\mathrm{dim} V_{\omega_j}^{\iota_j}= 2^{k_1} 3^{k_2} 5^{k_3} 7^{k_4} 11^{k_5} 13^{k_6}
17^{k_7} 19^{k_8} 23^{k_9} 31^{k_{10}}.
\eeq
It is immediate to verify that
$\mathfrak{f}$ maps the positive 8-orthant injectively into the 10-orthant, as
the reader can verify by writing down explicitly the expression of
$\mathfrak{f}_l(\iota)=\sum_{lj}\mathsf{m}^\mathfrak{f}_{lj} \iota_j$ and
verifying that the integral $10\times 8$ matrix $\mathsf{m}^\mathfrak{f}$ has
maximal rank. This implies that each summand $\bigotimes_l
V_{\omega_l}^{\iota_l}$ in \eqref{eq:e8numrep} is
uniquely determined by the exponents $\iota_l$ of the tensor powers. Let then $\iota_j^{\rm max}$ be the
indices labelling the summand  of maximal
dimension on the r.h.s. of \eqref{eq:e8numrep}.  This summand contains the dominant weight
$\sum_{j}i^{\rm max}_j \omega_j\npreceq 2 \rho$ in its weight module, and by
construction it is the unique direct summand in \eqref{eq:e8numrep}
containing it, so $N^{(k)}_{\iota}=0$. Induction on the maximal dimension gives
$N^{(k)}_{\iota}=0$ for all $\sum_{j}\iota_j \omega_j\npreceq 2
\rho$. The lemma then follows from writing down the condition $\sum_{j}\iota_j \omega_j\preceq 2
\rho$ in the root basis.
\end{proof}

\begin{rmk}
  It is obvious that \eqref{eq:condmon} could be further refined to provide
  stronger bounds for any fixed $k<120$. It will suffice in our discussion to
  consider just the uniform bound above, and treat all values of $k$
  simultaneously in \cref{prob:e8num}, as will be clear momentarily.
\end{rmk}

\begin{rmk} The condition of being admissible gives an {\it a priori} bound on the range of $\iota_j$ in the sum
on the right hand side of \eqref{eq:e8num},
\beq
\chi_{\wedge^k \mathfrak{e}_8} = \sum_{\iota \in \mathfrak{I}} N^{(k)}_{\iota} \prod_{l=1}^8 \chi_{l}^{\iota_l}
\label{eq:e8num2}
\eeq
with possibly at most $|\mathfrak{I}|=950077$ monomials appearing with non-zero
coefficient in \eqref{eq:e8num2} for any given value of $k$. Crude as it might
appear, the sufficient vanishing condition of \cref{lem:vanish} is remarkably
close to be necessary as well for sufficiently big $k$; we will be able to
check {\it a posteriori} that 
\begin{align}
  \mathrm{card}\Big\{ \iota \in \mathfrak{I} \big| N^{(k)}_{\iota} \neq 0 \hbox{ for some } k  \Big\} & = 949468,\\
  \mathrm{sup}_k \mathrm{card}\Big\{ \iota \in \mathfrak{I} \big| N^{(k)}_{\iota} \neq 0  \Big\} & = 949256 \quad (k=118),
\end{align}
meaning that most ($\approx 99.94 \%$) admissible monomials will appear with
non-vanishing coefficient in the
polynomial character
decomposition of $\chi_{\wedge^k \mathfrak{e}_8}$ for some $k$, and most
(up to $\approx 99.91$ \%) will appear for a fixed $k$ big enough.

\end{rmk}

\subsection{Computing the solution of \cref{prob:e8num}}
\label{sec:compute}

\cref{lem:vanish} reduces \cref{prob:e8num} to a finite dimensional linear
  problem which can in principle be fed to a computer solver; in practice, however,
  this will require a substantial degree of additional sophistication.
  %For the
  %reader who is uninterested in the gory detail of how the integers
  %$N_\iota^{(k)}$ are computed,
  The main %take-home message
  idea of our solution is
  that  the large linear problem \cref{prob:e8num} is non-trivially equivalent
    to a big number of small linear problems over $\bbQ$, that are explicitly solvable in
    parallel (and within our lifetimes; typically in the order of a few weeks
    when implemented on a small compute cluster).
    %They may otherwise wish to skip the
  %rest of this section.
%
  %For the rest, what we'll provide in this Section is a detailed  explanation of the methods employed to compute the
  %solution of \cref{prob:e8num}, and a discussion of its practical
    %implementation.
%    This works essentially in three main stages:
%  \ben
%\item we break up the set of admissible exponents $\mathfrak{I}$ into
%  $\approx 4\times 10^3$ subsets $\mathfrak{I}_\phi^\aleph$;
%\item we then turn \cref{prob:e8num} into a large set of numerical linear problems over
%  $\bbQ$ of sufficiently small size. In particular the coefficients $N_\iota^{(k)}$
%  are determined by solving for each subset $\mathfrak{I}_\phi^\aleph$ a
%  linear problem over $\bbQ$ of rank $|\mathfrak{I}_\phi^\aleph|$;
%\item writing these linear problems requires finding an arbitrary root for a
%  large number of algebraic
%  systems of non-linear equations on the torus $\mathcal{T}_{E_8}$ of the form
%  of \eqref{eq:invchi} below. We cannot do that
%  analytically, but we can do it numerically and to precision high enough such
%  that the inherent integral structure of the problem (due to $N^{(k)}_\iota\in\bbZ$) allows for a reliable rounding of the
%  numerical result at an intermediate stage. We will establish rigorous analytic
%  error bounds to prove that our integral rounding procedure is not only
%  reliable, but it's actually {\it exact}.
%  \een

  \subsubsection{Four paths to polynomial character decompositions}
  \label{sec:4paths}

  %As alluded to above,
  \cref{lem:vanish} reduces \cref{prob:e8num} to the following finite dimensional linear
problem: impose \eqref{eq:e8num2} as an identity of regular
functions on $\cT_{E_8}$,
\beq
\chi_{V} = \sum_{w \in \Gamma(V)} \prod_i Q_i^{w_i} \in \bbZ[Q_1^\pm, \dots, Q_8^\pm],
\label{eq:Qpol}
\eeq
and then read off $N^{(k)}_{\iota}$ by plugging $V=\wedge^k
\mathfrak{e}_8$ and $V=V_{_j}$ onto either side of \eqref{eq:e8num2}, and 
equating the coefficients. One immediate drawback of such a brute-force
approach is the sheer size of these Laurent
polynomials -- from \cref{lem:vanish}, there are $\prod_k (1+2 \sum_j
e_{jk})\approx 3.2 \times 10^{17}$ weight spaces appearing in \eqref{eq:e8num2}, thus
rendering this approach unwieldy. I describe below four computational methods that are both
more effective and are also complementary in what they allow to compute.

\begin{description}
%\ben
\item[Method 1 (Numerical sampling)] The simplest thing to do is to consider a {\it sampling set} of $|\mathfrak{I}|$ numerical
  values for the conjugacy class $[g]=[\exp(h)]=[(Q^{(\iota)}_1, \dots, Q^{(\iota)}_8)] \in
  \cT_{E_8}/\cW_{E_8}$ for $i\in \mathfrak{I}$. Upon evaluating the regular
  characters $\chi_{\wedge^k \mathfrak{e}_8}(\exp h)$, $\chi_{j}(\exp
  h)$ on either side of \eqref{eq:e8num2}, and for generic sampling sets, we
  are  left with a rank-$|\mathfrak{I}|$ linear system with $N_\iota^{(k)}$ as
  unknowns to solve for.  \\
  This is a straightforward method which furthermore works uniformly in
  $k$. However, for a generic choice of sampling values, the linear system
is dense and extremely poorly-conditioned -- the corresponding matrix is a
generalised Vandermonde matrix minor -- which implies that exact arithmetics
is required for its solution. Memory-wise this places a bound for the size at around
\beq
r_{\rm max} := 3.5\times 10^3.
\eeq
Since $|\mathfrak{I}|\approx 10^6$, this means that we're off by three orders of magnitude here.
  
\item[Method 2 ($Q$-expansions)] As an alternative, we could consider to
  Taylor-expand $\chi_{\wedge^k \mathfrak{e}_8}(g)$ in $\chi_i$ at a value $g=\exp h$ such that
  $\chi_{i}(g)=0$ for all $i$. It is immediate to verify, for
  example, that
\beq
t(h_0)=-\frac{2 \ri \pi }{31}\big(6,3 ,15,1,12,-4,5,0\big)
\label{eq:u0}
\eeq
in linear co-ordinates for $\cT_{E_8}$ results in $\chi_i(\exp h_0)$
being zero for all $i$:
a direct calculation of $\chi_{\wedge^k \mathfrak{e}_8}(\exp h_0)$
%from the evaluation of the virtual power sum characters
%$\chi_{\mathfrak{e}_8}(\exp k h_0)$:
in \eqref{eq:e8num2} returns the constant term $N^{(k)}_{(0,\dots, 0)}$ for all
$k$. In the same vein, $N^{(k)}_{\iota}$ can be computed by taking
higher order $\chi_{i}$-derivatives of $\chi_{\wedge^k
  \mathfrak{e}_8}$ at $h=h_0$ in the following three steps:
\ben
\item we first compute $Q_j$-derivatives of $\chi_{\wedge^k
  \mathfrak{e}_8}$ at $h=h_0$ from \eqref{eq:Qpol}, using the explicit
  structure of $\Gamma(\mathfrak{e}_8)=\Delta$;
%, or by differentiating power sums and
%converting into elementary symmetric functions,
\item the Jacobian $\de_{Q_j} \chi_{i}$ may be computed combining basic relations in $\mathrm{Rep}(E_8)$ with Newton
  identities;
\item finally, higher order $\chi_{j}$ derivatives of $\chi_{\wedge^k
    \mathfrak{e}_8}$ can be obtained by the multi-variate Fa\`a di Bruno
  formula\footnote{ We discuss a version of this formula in \cref{prop:FdB}.} \cite{MR1325915}, converting $\de^{|\iota|}_{\chi_{1}^{\iota_1}\dots \chi_{1}^{\iota_8}}$ into
    its expression as a $|\iota|^{\rm th}$-order differential operator in
    $(Q_j)_j$.
\een
%    \\
    The main drawback
    here is 
    the factorial growth in $|\iota|$  of the number of terms in the Fa\`a di Bruno
    formula, which practically limits its use to
    \beq
    |\iota|\lesssim 5 =: d_{\rm max}.
    \eeq
This is
a very long way from the value
\beq
|\iota^{\rm max}|:=\sup_{\iota\in \mathfrak{I}}
\sum_j \iota_j=31
\label{eq:iotamax}
\eeq
for the highest sum of admissible exponents in \eqref{eq:admexp}.
\item[Method 3 (Numerical interpolation)] A further, numerical way to determine $N^{(k)}_{\iota}$ is as
  follows. Suppose that, for every $\iota$, we could find a solution $h_\iota$ of the system of algebraic
  equations in $Q$
  \beq
  \chi_{{j}}(\exp h_\iota) = \sum_{w \in \Gamma(V_{\omega_j})} \prod_l Q_l^{w_l}=u_{j}^{(\iota)}
  \label{eq:invchi}
  \eeq
  for given $\{(u_1, \dots, u_8)^{(\iota)} \in \bbC^8 \}_{\iota \in \mathfrak{I}}$. Then $\mathfrak{p}_k(\chi_{1}, \dots, \chi_{8})$ can be recovered by
  polynomial interpolation of $\chi_{\wedge^k \mathfrak{e}_8}(Q(u^{(\iota)}))$
  in $\chi_{1}, \dots, \chi_{8}$ through the given values of $\chi_j(\exp h)=u_j^{(\iota)}$:
  even if the solution $Q(u_{i}^{(\iota)})$  cannot in general be found
  exactly, one might still hope to compute an approximate, generic root of the algebraic
  system  \eqref{eq:invchi} numerically; and if this is done with sufficiently
  high precision, the
sought-for interpolating coefficients $N^{(k)}_{\iota}$ can be
reliably computed by rounding to the nearest integer. \\
This semi-numerical approach is effective in computing e.g. $N^{(k)}_{\iota}$
when all but one $\iota_j \neq 0$: in this case interpolation is done in only one variable
and the number of sampling points necessary is in the order of a few dozens. The main
problem with the general case is that it requires $|\mathfrak{I}|$
interpolating points, hence we would be faced with a large interpolating
matrix to invert and the same 
problem of {\bf Method 1} above. Alternatively, we could resort to interpolation on the
lattice hypercube $\times_j [0, \iota_j^{\rm max}] \cap \bbZ^8$ with $\iota_j^{\rm
  max}=\mathrm{sup}_{\iota\in\mathfrak{I}}\iota_j$, with a trivially invertible
Vandermonde matrix but $\prod \iota_j^{\rm max}\approx 6.5 \times 10^9$ high-precision
numerical inversions to perform. On the type of computer systems we
employed\footnote{These were all 64-bit systems with quad-core
  CPUs and up to 48 Gb of
  RAM.}, a single numerical root is typically found in
$\approx 10~\mathrm{sec}$ to the necessary precision, so
the total runtime would take $\approx 2\times10^3~\mathrm{years}$ on a single CPU.

%As a single inversion takes 20 seconds, the latter approach would take several thousand years on a single machine. Two pluses are however that 1) the numerical inversions can be parallelized straightforwardly and 2) if we want to find only a subset of coefficients (say those with Subscript[u, i]=0 for all i except one or two) this is pretty efficient - and the constraints from the growing order of the monomials affects the computational complexity much less than in B). It also works uniformly in k - as the same interpolating set is used for all k. Also notice that while the procedure is fully numerical, if we interpolate at integer values for Subscript[u, i] we can give a reliable exact prediction for Subscript[P, k] by integer truncation, as in C) above.
\item[Method 4 ($p$-adic expansions)] We could also consider a version of {\bf
  Method 1} where we sample at
  integers and reduce mod $p$ for sufficiently large $p$. This works efficiently
  for low $k$, but it is a challenge to determine the behaviour for higher
  exterior powers and, furthermore, it is hard to place an {\it a priori} bound on the largest $p$ required.
\end{description}

Each of these methods individually fails to address \cref{prob:e8num}; we have
in particular bounds
\beq
r_{\rm max} \approx 3.5\times 10^3, \qquad d_{\rm max} \approx 5
\label{eq:bounds}
\eeq
for the size of the linear systems we can solve and the order of the
derivatives respectively for {\bf Method 1} and {\bf Method 2}: the bound on the size
comes from memory constraints when using exact linear solving algorithms, whilst
the bound on the order is a runtime bound for the use of the Fa\`a di Bruno
formula in many variables.

Nonetheless, it is still possible to
combine {\bf Methods 1-4} in such a way that a complete answer of the problem
can be obtained efficiently by distributed computation. Suppose that $(Q^{(\kappa)}_1,
\dots, Q^{(\kappa)}_8) \in
\cT_{E_8}$ is a point on the Cartan torus such that the value of the $j^{\rm th}$
fundamental character, $u_{j}^{(\kappa)}:=\chi_{j}(Q^{(\kappa)})$ in \eqref{eq:invchi}, is zero for all but a few values of
$j$. Then, upon evaluating $\chi_{j}$ at such $Q^{(\kappa)}$, all terms
with $\iota_j\neq 0$ drop out of the r.h.s. of \eqref{eq:e8num2}, leaving a
 a smaller number of unknown coefficients $N_\iota^{(k)}$ to solve for; case
 in point, \eqref{eq:u0} is an example where only one term survives.
%For example, evaluating \eqref{eq:e8num2} at $Q=\exp(t(h_0))$ given as
%in \eqref{eq:u0} sets all terms with $\iota\neq 0$ to zero on the r.h.s. of
%\eqref{eq:e8num2}, so that $N_{(0, \dots ,0)}^{(k)}$ can be read off from
%evaluating the antisymmetric adjoint traces in the l.h.s..

This means
that if we could choose judiciously the elements $(Q_1^{(\kappa)}, \dots, Q_8^{(\kappa)})$ in {\bf Method~1} such
that any given subset of the fundamental characters are vanishing at $Q^{(\kappa)}$, the original linear
problem \eqref{eq:e8num2} can be reduced to $2^8$~linear sub-systems of
smaller size: any $\phi\in\bbN^8$ with $\phi_j\in \{0,1\}$ determines a subset $\mathfrak{I}_\phi\subset\mathfrak{I}$ with $\phi_j$ equal to zero or one for $\iota_j=0$ or
$\iota_j>0$ respectively, and this is a partition of the set of admissible
exponents. We can further reduce the size of these systems by using {\bf
  Method~2} and considering derivatives
%5
%\beq
$\frac{\de^{|c|} \chi_{\wedge^k \mathfrak{e}_8}}{\de \chi_{1}^{c_1}\dots \de
    \chi_{8}^{c_8}}(Q_1^{(\kappa)}, \dots, Q_8^{(\kappa)})$: indeed, computing
  $\chi_{j}-$derivatives of given order $c_j$ of the l.h.s. of \eqref{eq:e8num2} and evaluating at points
  with $u_j^{(\kappa)}=\chi_{j}(Q^{(\kappa)})=0$ entails that only summands with
  $\iota_j=c_j$ will appear on
  the r.h.s.. I claim that the procedure above
  %, which effectively reduces the big linear
%  problem \eqref{eq:e8num2} to a large number of systems of much smaller
  %  size,
  can be performed effectively and in a way
  compatible with \eqref{eq:bounds} in the following theorem, the proof of
  which will occupy the largest part of \cref{sec:computeapp}.

  As above, let $\phi\in\{0,1\}^8$ denote
  an 8-tuple of integers equal to either $0$ or $1$, and let $i_1(\phi)$ (resp. $i_2(\phi)$) be the
  index of the non-vanishing component of $\phi$ such that $\dim
  V_{\omega_{i_1(\phi)}}$ is maximal (resp. $\dim
  V_{\omega_{i_2(\phi)}}$ is next-to-maximal) in $\big\{\dim
  V_{\omega_i}\big\}_{i | \phi_i \neq 0}$; e.g.
  \beq
  \phi=(0,1,0,0,1,0,1,1) \Rightarrow i_1(\phi)=2;~ i_2(\phi)= 5.
  \eeq
  Let $m_1$, $m_2\in \{1, \dots, d_{\rm max}\}$. We write
  \bea
  \mathfrak{I}_\phi^{(m_1,m_2)} &=& \{\iota \in \mathfrak{I} | \iota_j = 0
  \Leftrightarrow \phi_j=0, \iota_{i_1(\phi)}=m_1, \iota_{i_2(\phi)}=m_2\}, \nn \\
    \mathfrak{I}_\phi^{(m_1)} &=&
   \{\iota \in \mathfrak{I} | \iota_j = 0
  \Leftrightarrow \phi_j=0, \iota_{i_1(\phi)}=m_1, \iota_{i_2(\phi)}>d_{\rm
    max}-m_1 \},  \nn \\
    \mathfrak{I}_\phi^{(>d_{\rm max})} &=&
   \{\iota \in \mathfrak{I} | \iota_j = 0
  \Leftrightarrow \phi_j=0, \iota_{i_1(\phi)}>d_{\rm max}\},
  \eea
  and we write $\aleph$ to indicate any of the superscripts $(m_1, m_2)$,
  $(m_1)$, or $(>d_{\rm max})$ where $1< m_1+m_2\leq d_{\rm max}$ and $1\leq m_1
  \leq d_{\rm max}$. Clearly, we have
  \beq
  \mathfrak{I} = \bigsqcup_{\phi,\aleph} \mathfrak{I}_\phi^\aleph.
  \eeq
  \begin{thm}
    There exists a choice of $u_j^\iota\in \bbQ+\ri \bbQ$, $\iota\in\mathfrak{I}_\phi^\aleph$, such that
    \ben[i.]
    \item \cref{prob:e8num} is equivalent to solving, for every $(\phi, \aleph)$, the linear system over
      the rationals
      \beq
      \sum_{\kappa\in \mathfrak{I}_{\phi}^{\aleph}} (\cA_\phi^\aleph)_{\iota
        \kappa} N_{\kappa}^{(k)} = (\cB_\phi^\aleph)_\iota^{(k)},
      \label{eq:numsys}
      \eeq
      where
      \ben[(a)]
      \item 
      \beq
      (\cA_\phi^\aleph)_{\iota \kappa} = \mathfrak{Re}\l(\prod_j
      (u_j^\iota)^{\kappa_j} \r),
      \eeq
      as in {\bf Method 1};
      \item $(\cB_\phi^\aleph)_\iota^{(k)}$ is computed from
      $Q-$derivatives of order at most $d_{\rm max}$ of $\chi_{\wedge^k \mathfrak{e}_8}$ at points
      $Q^\iota\in\cT_{E_8}$ such that $\chi_{j}(Q^\iota)=u_j^\iota$, as
        in {\bf Method 2};
      \een          
      \item the pre-images $Q^\iota=Q(u^\iota)$ can be obtained
          numerically via {\bf  Method 3} to precision $2^{-\mathsf{M}_0}$,
          $\mathsf{M}_0\in\bbN$, and there exist {\rm a priori} analytical
          bounds on the resulting numerical error on
        $(\cB_\phi^\aleph)_\iota^{(k)} \in \bbQ$ at the points
          $Q^\iota=Q(u^\iota)$ such that for sufficiently large $\mathsf{M}_0$, the numerical calculation of
        $(\cB_\phi^\aleph)_\iota^{(k)}$ via {\bf  Method 3} can be rounded
          rigorously to its {\rm exact}
        rational value;
%        when sufficient numerical precision is used.
          \item the system \eqref{eq:numsys} is non-degenerate, of rank lower
        than $r_{\rm max}$ for all $(\phi, \aleph)$, and can be solved
        effectively         using $p$-adic expansions as in {\bf Method 4}.
    \een

    \label{thm:mainthm}
    
  \end{thm}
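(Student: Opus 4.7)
The plan is to verify parts (i)--(iii) in turn, exploiting the partition
$\mathfrak{I} = \bigsqcup_{\phi, \aleph} \mathfrak{I}_\phi^{\aleph}$. For part (i),
I would, for each $(\phi, \aleph)$, seek sampling values $u_j^\iota \in \bbQ + \ri \bbQ$
with the vanishing constraint $u_j^\iota = 0$ whenever $\phi_j = 0$. Evaluating the
polynomial identity \eqref{eq:e8num2} at a torus point $Q^\iota$ with
$\chi_{j}(Q^\iota) = u_j^\iota$ then automatically kills every monomial whose
support is not contained in $\{j \mid \phi_j = 1\}$. To further decouple the blocks indexed by
$\aleph = (m_1, m_2)$ I would evaluate not $\chi_{\wedge^k \mathfrak{e}_8}$ itself but
$\mathfrak{D}^c \chi_{\wedge^k \mathfrak{e}_8}$ with $c_{i_1(\phi)} = m_1$,
$c_{i_2(\phi)} = m_2$ and $c_j = 0$ otherwise: after evaluation, the combined effect
of the derivative and the vanishing of the remaining $\chi_j$ at high order selects exactly the
monomials supported on $\mathfrak{I}_\phi^{(m_1,m_2)}$, producing \eqref{eq:numsys} with
$\cA_\phi^\aleph$ as in (a), where taking real parts is a device to rephrase a system over
$\bbQ[\ri]$ as one over $\bbQ$. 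The right-hand side $\cB_\phi^\aleph$ is, by the
multivariate Faà di Bruno formula, a polynomial expression of order at most $d_{\rm max}$ in
$Q$-derivatives of $\chi_{\wedge^k \mathfrak{e}_8}$ and of $\chi_1, \dots, \chi_8$ at
$Q^\iota$, giving (b).

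For part (ii), I would recover each pre-image $Q^\iota$ from $u^\iota$ by Newton--Raphson
iteration applied to \eqref{eq:invchi}, in which the Jacobian $\de_{Q_j}\chi_{i}$ is
computable in closed form from $\Gamma(V_{\omega_i})$ and the Newton identities sketched in
{\bf Method~3}. Choosing the $u^\iota$ in a generic locus makes the Jacobian invertible at
the root, so standard quadratic convergence yields an iterate $\widetilde{Q}^\iota$ with
$|\widetilde{Q}^\iota - Q^\iota| \leq 2^{-\mathsf{M}_0}$. The main analytical step is then to
propagate this bound to the numerical value of $\cB_\phi^\aleph$. Since
$\cB_\phi^\aleph$ is a polynomial in $Q$ of a priori bounded degree with coefficients
controlled by the weight multiplicities of $\wedge^k \mathfrak{e}_8$, an explicit uniform
Lipschitz constant $L$ is available on any fixed compact neighbourhood of the sampling
locus. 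Because the exact value of $\cB_\phi^\aleph$ is \emph{a priori} a rational number with an
explicit denominator bound $D$ depending only on the common denominators of the chosen
$u^\iota$ and on the structure of $\mathrm{Rep}(E_8)$, choosing $\mathsf{M}_0$ larger than
$\log_2(LD)$ plus a safety margin makes the nearest-rational rounding of the numerical
output rigorous.

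The main obstacle is part (iii), which requires both non-singularity of
$\cA_\phi^\aleph$ over $\bbQ$ and the rank bound $|\mathfrak{I}_\phi^{\aleph}| < r_{\rm max}$.
Non-singularity is not automatic for arbitrary sampling sets, and the natural strategy is to
pick the rational values $u_j^\iota$ on a product-lattice structure adapted to
$\mathfrak{I}_\phi^{\aleph}$, so that $\det \cA_\phi^\aleph$ factors up to a non-zero rational
constant as a product of one-variable Vandermonde determinants: non-vanishing then reduces to
distinctness of the sample values in each direction, which can be enforced by construction.
The rank bound follows from an elementary count using the Casimir bound \eqref{eq:admexp}
together with the constraints $\phi \in \{0,1\}^8$ and the truncation $m_1 + m_2 \leq d_{\rm max}$.
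With these two ingredients in place, the system \eqref{eq:numsys} admits a Dixon-style
$p$-adic solution as in {\bf Method~4}: solve modulo a prime $p$ of good reduction, Hensel-lift
to $p^N$ for $N$ large relative to the a priori height bound on $N^{(k)}_\kappa$ (again
derived from the Lipschitz estimate of part (ii)), and reconstruct the
rationals by continued fractions. Each block $(\phi, \aleph)$ being independent, the
entire scheme parallelises in the obvious way.
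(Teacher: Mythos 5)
Your overall skeleton matches the paper's (partition by support and by the two distinguished low exponents, sampling values with prescribed zero components, derivatives of order $\leq d_{\rm max}$, Newton--Raphson preimages with rigorous rounding, Dixon $p$-adic solving), but there is a genuine gap in your part (i). Evaluating $\mathfrak{D}^c\chi_{\wedge^k\mathfrak{e}_8}$ at a point with $u_j^{(\iota)}=0$ for $j\notin\mathrm{supp}(\phi)$ and $u_{i_1(\phi)}^{(\iota)}=u_{i_2(\phi)}^{(\iota)}=0$ does \emph{not} select exactly the monomials of $\mathfrak{I}_\phi^{(m_1,m_2)}$: every monomial with $\kappa_{i_1(\phi)}=m_1$, $\kappa_{i_2(\phi)}=m_2$ and support \emph{strictly contained} in $\mathrm{supp}(\phi)$ also survives, since nothing forces $\kappa_j\geq 1$ for the remaining $j$ with $\phi_j=1$. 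Consequently the blocks are not independent, and your closing claim that the scheme ``parallelises in the obvious way'' is false as stated: the systems form a block-triangular family that must be solved sequentially along the weak order \eqref{eq:preord1}, with the already-determined coefficients of the lower blocks subtracted into the inhomogeneous term --- this is exactly the role of the sets $\mathfrak{P}_\phi^{\aleph}$ in \eqref{eq:Lphiref1}--\eqref{eq:Lphiref3}, and only incomparable blocks may be run in parallel. Without this correction your systems \eqref{eq:numsys} contain unknowns from other blocks (equivalently, your claimed equalities fail), so the asserted equivalence with \cref{prob:e8num} does not hold.

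Two further points are weaker than you suggest, though not irreparable. First, your structural non-singularity argument via a ``product-lattice'' choice of $u_j^{\iota}$ and factorisation into univariate Vandermonde determinants does not apply verbatim: the exponent sets $\mathfrak{I}_\phi^{\aleph}$ are not boxes but (shifted) weighted-degree-truncated lower sets, and, more importantly, the admissible sampling values are severely constrained by part (ii) --- they must be small half-integers (cf.\ \eqref{eq:smpl}) so that the exact value of $(\cB_\phi^\aleph)_\iota^{(k)}$ lies in the lattice $2^{-|\iota^{\rm max}|}(\bbZ+\ri\bbZ)$ and the growth of the power sums is controlled, and the preimages must avoid the locus $\det\cJ=0$. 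Second, your Lipschitz-constant step hides the actual crux: $\cB_\phi^\aleph$ involves $\cJ^{-1}$ (so it is rational, not polynomial, in $Q$), the relevant constants are astronomically large (the paper needs $\mathsf{M}_0=494$ bits precisely because of this), and the required bound cannot simply be ``assumed available'' --- the paper derives it from harmonic-function derivative estimates and explicit Frobenius-norm/Fa\`a di Bruno bounds \eqref{eq:boundin}--\eqref{eq:normLL}, and then proves non-degeneracy of $\mathfrak{Re}\,\cA_\phi^\aleph$, non-vanishing of $\det\cJ$, and the precision threshold by exhibiting and computationally verifying a concrete sampling set (\cref{thm:round}), rather than by a generic or purely structural argument.
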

  
%\eeq
%

  The next Section gives a detailed account of both the proof of
  \cref{thm:mainthm} and its concrete computer implementation, leading to the
  calculation of all $N_\iota^{(k)}$ in \eqref{eq:e8num2}. The reader who is more
  interested in the applications of the solution of \cref{prob:e8num} via
  \cref{thm:mainthm} may wish to skip the next section and move on directly to \cref{sec:appl}.

  \section{Proof of \cref{thm:mainthm}}
  \label{sec:computeapp}
  \subsection{Partition of the monomial set}
\label{sec:partition}

The key point we explained at the end of the previous section is that while
{\it generic} choices of sampling sets in {\bf Method 1} give rise to large, dense linear
systems, there are {\it special} choices of the numerical values of
$Q^{(\iota)}\in \cT_{E_8}$ that
reduce the calculation of $N_\iota^{(k)}$ to the solution of a large number of
linear problems of much smaller size. We explore this idea in detail in this section.

Let
\beq
\bary{ccccl}
\varphi & :  & \mathfrak{I} & \to & \{0,1\}^8 \\
& & \iota & \to & (\theta(\iota_1), \dots, \theta(\iota_8))
\eary
\eeq
where for $n \in \bbN$
\beq
\theta(n)=\left\{\bary{cc} 1 & n\neq 0 \\ 0 & n=0 \eary \right.,
\eeq
and let $\mathfrak{I}_{\phi} = \varphi^{-1}(\phi)$ for $\phi\in
\{0,1\}^8$. The map $\varphi$ maps an 8-tuple of admissible exponents
$(\iota_1, \dots, \iota_8)$ into an 8-tuple $(\varphi_1(\iota), \dots,
\varphi_8(\iota))$ with $\varphi_j(\iota)=\theta(\iota_j)$ equal to one if $\iota_j$ is
non-zero, and zero otherwise.
%of the number of non-zero exponents of a given 8-tuple
%of admissible exponents
%$\iota \in \mathfrak{I}$ and
Its fibres $\mathfrak{I}_{\phi}$
%This effectively
  partition the monomial set into $2^8$ subsets of cardinality varying from 1
  to $\approx 3\times 10^4$,
  \beq
  \bigsqcup_{ \phi} \mathfrak{I}_{ \phi} = \mathfrak{I}.
  \eeq
  We furthermore define a strict weak ordering on the set
  of admissible exponents as
  \beq
  \iota < \kappa ~ \Longleftrightarrow ~ |\varphi(\iota)| <
  |\varphi(\kappa)|,
  \label{eq:preord0}
  \eeq
  and for subsets $\mathfrak{J}$, $\mathfrak{K}$ we will write
  $\mathfrak{J}<\mathfrak{K}$ if $\iota<\kappa$,
  $\forall~\iota\in\mathfrak{J}$, $\kappa\in\mathfrak{K}$.

  The crucial point is now as follows: consider a set of points on the torus $(Q^{(\iota)}_l)_l\in
  \cT_{E_8}$, $\iota \in \mathfrak{I}$,
  such that
%with $\varphi_i \in \{0,1\}$, and we can choose the sampling
%  set for $(\chi_i)_i$ to be compatible with the slicing by imposing that
  %
  \beq
  \chi_{j}(Q^{(\iota)})~\in~\varphi_j(\iota)~
  \bbC.
  \label{eq:admQ0}
  \eeq
  Evaluating \eqref{eq:e8num2} at $Q^{(\iota)}$ we have that only the coefficients
  $N^{(k)}_{\kappa}$ with $\varphi_k (\kappa) \leq \varphi_k (\iota)$ will appear
  on the r.h.s. of \eqref{eq:e8num2}. Then the linear problems
  \beq
\mathfrak{L}_{\phi} \triangleq  \l\{  \chi_{\wedge^k
  \mathfrak{e}_8}(Q^{(\iota)})- \sum_{\stackrel{\kappa < \iota}{\varphi_l (\kappa) \leq \varphi_l (\iota)}} N^{(k)}_{\kappa} \chi_{1}^{\kappa_1} \dots  \chi_{8}^{\kappa_8} = \sum_{\kappa
      \in \mathfrak{I}_{\varphi(\iota)}} N^{(k)}_{\kappa} \chi_{1}^{\kappa_1} \dots
\chi_{1}^{\kappa_8}\r\}_{\stackrel{\iota\in\mathfrak{I}}{\varphi (\iota) = \phi}}
\label{eq:Lphi}
  \eeq
are fully determined by the solution of $\mathfrak{L}_{\phi'}$ for $|\phi'| < |\phi|$, are each of size
$|\mathfrak{I}_{\phi}|$, and their solution for all $\phi$ solves
\cref{prob:e8num}: the end result is to break up our original linear system into $2^8=256$
linear subsystems of size varying from 1 to $\approx 3 \times 10^4$. \\

\subsubsection{Refining the partition}
The original linear problem for generic sampling values as described in {\bf
  Method 1} is theoretically solved in
$\cO(|\mathfrak{I}|^3)$ time; the special choice \eqref{eq:admQ0} breaks it up to
  $\approx 10^2$ sub-systems of size at most $\approx 10^{-2} |\mathfrak{I}|$,
giving an
improvement of a factor more than $10^4$ in the expected
runtime using floating-point precision. However 
%\item[Partition of the monomial set II: $\chi_i$-derivatives] Complexity-wise,
this isn't satisfactory yet, since
%for three reasons:
%
%\ben
%\item
we are still off by one order of magnitude in the size of the individual
  systems we can practically solve,
  \beq
  \mathrm{sup}_\phi \mathrm{rank} \mathfrak{L}_{\phi} \gg
   r_{\rm max}.
  \eeq

  In order to reduce $\mathfrak{L}_{\phi}$ to sub-systems of smaller size we
  refine the partition of $\mathfrak{I}$
  %we will slice up the exponent  sets $\mathfrak{I}_\phi$
  by considering proper subsets of
  $\mathfrak{I}_\phi$ having two fixed exponents
  equal to, or greater than, an integer less than $d_{\rm max}$. To this aim 
  we introduce maps $\vartheta_{ \phi}^{(1)}$ and $\vartheta_{\phi}^{(2)}$, with
  \beq
  \bary{ccccl}
  \vartheta_{\phi,(l)} & : & \mathfrak{I}_{\phi} & \to & \bbN_0, \qquad l=1,2, \\
  & & \iota & \to & \iota_{\sigma( \phi,l)}
  \eary
  \eeq
  where $\sigma( \phi,l)$ is defined by
  \bea
%  \Delta_{\phi} & := & \{\dim
%  V_{\omega_j}\}_{\phi_j \neq 0}, \nn \\
  \dim V_{\omega_{\sigma( \phi,1)}} &=& \mathrm{max} \{\dim  V_{\omega_j}\}_{\phi_j \neq 0}, \nn
  \\
  \dim V_{\omega_{\sigma( \phi,2)}} &=&   \mathrm{max} \{\dim  V_{\omega_j}\}_{\phi_j \neq 0, j\neq \sigma( \phi,1)}.
  %\Delta_{\phi}\setminus  \dim V_{\omega_{\sigma( \phi,1)}}\big).
  \eea
  %
%{\bf EXPLAIN WHAT's BEING DONE HERE}
  %
  They can be used to refine the strict weak ordering \eqref{eq:preord0} on
  $\mathfrak{I}$ as follows
  \beq
  \iota < \kappa ~ \Longleftrightarrow ~
  \l\{
  \bary{cc}
  |\varphi(\iota)| <
  |\varphi(\kappa)|, & \mathrm{or} \\
  \varphi(\iota)=\varphi(\kappa)~\mathrm{and}~ \iota_{\sigma( \phi,1)}\leq
  d_{\rm max}, \kappa_{\sigma( \phi,1)}> d_{\rm max}, &  \mathrm{or} \\
  \varphi(\iota)=\varphi(\kappa)~\mathrm{and}~ \iota_{\sigma( \phi,1)},  \iota_{\sigma( \phi,2)}, \kappa_{\sigma( \phi,1)}\leq
     d_{\rm max}, \kappa_{\sigma( \phi,2)}> d_{\rm max}. &
  \eary
  \r.
  \label{eq:preord1}
  \eeq
  Let $m_l \in \bbN$, $l=1,2$ with $m_1+m_2\leq d_{\rm max}=5$, and introduce for fixed
  $\phi$ the following 16 subsets of $\mathfrak{I}_{\phi}$:
  \bea
  \mathfrak{I}^{(m_1,m_2)}_{\phi} & \triangleq & \vartheta_{\phi,1}^{-1}(m_1) \cap
  \vartheta_{\phi,2}^{-1}(m_2), %, \qquad \qquad \quad m+n \leq d_{\rm max},
  \\
  \mathfrak{I}^{(m_1)}_{\phi} & \triangleq & \vartheta_{\phi,1}^{-1}(m_1) \cap
  \vartheta_{\phi,2}^{-1}(\{m_2>d_{\rm max}-m_2\}),
  %, \quad 1\leq   m\leq d_{\rm max},
  \\
  \mathfrak{I}^{(>d_{\rm max})}_{\phi} & \triangleq &
  \vartheta_{\phi,1}^{-1}(\{ m_1> d_{\rm max}\}).
  \eea
  Clearly,
  \beq
\mathfrak{I}= \bigsqcup_{\phi} \bigsqcup_{m_1+m_2\leq d_{\rm max}}
\mathfrak{I}^{(m_1,m_2)}_{\phi} \bigsqcup_{1\leq   m_1\leq d_{\rm
    max}}\mathfrak{I}^{(m_1)}_{\phi} \sqcup \mathfrak{I}^{(>d_{\rm
    max})}_{\phi}
\label{eq:partI}
  \eeq
  and
  \beq
  \iota < \kappa < \lambda \quad \forall~ \iota\in \mathfrak{I}^{(m_1,m_2)}_{\phi},
  \kappa\in \mathfrak{I}^{(m_1)}_{\phi}, \lambda \in  \mathfrak{I}^{(>d_{\rm max})}_{\phi}.
  \eeq
  We now have
  \beq
  \sup\Big\{\big|\mathfrak{I}^{(m_1,m_2)}_{\phi}\big|,\big|\mathfrak{I}^{(m_1)}_{\phi}\big|,
  \big|\mathfrak{I}^{(>d_{\rm max})}_{\phi}\big|\Big\}_{m_1,m_2,\phi} =
  \Big|\mathfrak{I}^{(>d_{\rm max})}_{(1,1,0,0,1,1,1,1)}\Big|=3027\ll r_{\rm max},
  \eeq
so if the linear problem $\mathfrak{L}_\phi$ in
\eqref{eq:Lphi} could be reduced down to linear sub-systems of rank
$|\mathfrak{I}^{(m_1,m_2)}_{\phi}\big|$, $|\mathfrak{I}^{(m_1)}_{\phi}|$
and $|\mathfrak{I}^{(>d_{\rm max})}_{\phi}|$, these could now be individually
solved explicitly. This can be done by mimicking \eqref{eq:Lphi}:
%for
%$\iota\in\mathfrak{I}_\phi^{(m,n)}$ or $\iota\in\mathfrak{I}_\phi^{(m)}$,
for $\iota\in \mathfrak{I}_\phi$, consider $(Q^{(\iota)}_l)_l\in \cT_{E_8}$
such that
\beq
%\bary{rclcc}
\chi_{j}(Q^{(\iota)}) = u_j^{(\iota)}  \in  \varphi_j(\iota)
\l\{
\bary{cc}
\delta_{j,\sigma(\phi,1)}\delta_{j,\sigma(\phi,2)}, &
\iota\in\mathfrak{I}_\phi^{(m_1,m_2)}, \\
\delta_{j,\sigma(\phi,1)}, &
\iota\in\mathfrak{I}_\phi^{(m_1)}, \\
1, &
\iota\in\mathfrak{I}_\phi^{(> d_{\rm max})},
\eary
\r\}
~ \bbC.
\label{eq:admQ1}
\eeq
and define
%write $\hat l = \vartheta_{\phi,l}^{-1}(\iota)$ and consider
%
  \bea
\mathfrak{L}_{\phi}^{(m_1,m_2)} & \triangleq &  \Bigg\{  \frac{1}{(m_1)!
  (m_2)!}\frac{\de^{m_1+m_2}}{\de \chi_{{\sigma(\phi,1)}}^{m_1} \de\chi_{{\sigma(\phi,2)}}^{m_2}}\chi_{\wedge^k
  \mathfrak{e}_8}(Q^{(\iota)})- \sum_{\kappa\in
  \mathfrak{P}_{\phi}^{(m_1, m_2)}}
%\stackrel{\kappa < \iota,
%    \kappa_{\sigma(\phi,l)}=m_l, l=1,2}{\varphi_l
%    (\kappa) \leq \varphi_l (\iota)}}
N^{(k)}_{\kappa}
\prod_{\stackrel{j\neq \sigma(\phi,l)}{l=1,2}} \chi_{j}^{\kappa_j}\big(Q^{(\iota)}\big)
%\dots
%\chi_{8}^{\kappa_8}\chi_{{\hat 1}}^{-m}
%\chi_{{\hat 2}}^{-n}
\nn \\ &=& \sum_{\kappa
      \in \mathfrak{I}_{\phi}^{(m_1,m_2)}} N^{(k)}_{\kappa}
\prod_{\stackrel{j\neq \sigma(\phi,l)}{l=1,2}} \chi_{j}^{\kappa_j}\big(Q^{(\iota)}\big)\Bigg\}_{\iota\in\mathfrak{I}_{\phi}^{(m_1,m_2)}},
%\nn \\
\label{eq:Lphiref1}
\\
\mathfrak{L}_{\phi}^{(m_1)} & \triangleq &  \Bigg\{
\frac{1}{(m_1)!}\frac{\de^{m_1} \chi_{\wedge^k
  \mathfrak{e}_8}}{\de
  \chi_{{\sigma(\phi,1)}}^{m_1}}\big(Q^{(\iota)}\big)- \sum_{\kappa \in
  \mathfrak{P}_{\phi}^{(m_1)}}
%  \stackrel{\kappa < \iota,
%    \kappa_{\sigma(\phi,1)}=m_1}{\varphi_l
  %    (\kappa) \leq \varphi_l (\iota)}}
  N^{(k)}_{\kappa}
\prod_{j\neq \sigma(\phi,1)} \chi_{j}^{\kappa_j}\big(Q^{(\iota)}\big)
%\dots
%\chi_{8}^{\kappa_8}\chi_{{\hat 1}}^{-m}
%\chi_{{\hat 2}}^{-n}
\nn \\
%&-& \sum_{m_2\leq d_{\rm max}-m_1}\sum_{\kappa \in \mathfrak{I}_\phi^{m_1,m_2}}
%N^{(k)}_{\kappa}
%\prod_{j\neq \sigma(\phi,1)} \chi_{j}^{\kappa_j}
&=& \sum_{\kappa \in \mathfrak{I}_{\phi}^{(m_1)}} N^{(k)}_{\kappa}
\prod_{j\neq \sigma(\phi,1)} \chi_{j}^{\kappa_j}\big(Q^{(\iota)}\big)\Bigg\}_{\iota\in\mathfrak{I}_{\phi}^{(m_1)}},
%\nn \\
\label{eq:Lphiref2}
\\
\mathfrak{L}_{\phi}^{(>d_{\rm max})} & \triangleq &  \Bigg\{
\chi_{\wedge^k
  \mathfrak{e}_8} \big(Q^{(\iota)}\big)- \sum_{\kappa \in
  \mathfrak{P}_{\phi}^{(>d_{\rm max})}}
%  \stackrel{\kappa < \iota,
%    \kappa_{\sigma(\phi,1)}=m_1}{\varphi_l
  %    (\kappa) \leq \varphi_l (\iota)}}
  N^{(k)}_{\kappa}
\prod_{j} \chi_{j}^{\kappa_j}\big(Q^{(\iota)}\big)
%\dots
%\chi_{8}^{\kappa_8}\chi_{{\hat 1}}^{-m}
%\chi_{{\hat 2}}^{-n}
%\nn \\
%&-& \sum_{m_2\leq d_{\rm max}-m_1}\sum_{\kappa \in \mathfrak{I}_\phi^{m_1,m_2}}
%N^{(k)}_{\kappa}
%\prod_{j\neq \sigma(\phi,1)} \chi_{j}^{\kappa_j}
= \sum_{\kappa \in \mathfrak{I}_{\phi}^{(>d_{\rm max})}} N^{(k)}_{\kappa}
\prod_{j}
\chi_{j}^{\kappa_j}\big(Q^{(\iota)}\big)\Bigg\}_{\iota\in\mathfrak{I}_{\phi}^{(>d_{\rm
  max})}},
%\nn \\
\label{eq:Lphiref3}
\eea
where
\bea
\mathfrak{P}_{\phi}^{(m_1, m_2)} & := & \l\{ \kappa \in \mathfrak{I} |
\kappa < \iota,~
\kappa_{\sigma(\phi,l)}=m_l
%,~ \varphi_r (\kappa) \leq \varphi_r
%(\iota)
~\forall l=1,2, \iota \in \mathfrak{I}_{\phi}^{(m_1, m_2)} \r\}, \nn \\
\mathfrak{P}_{\phi}^{(m_1)} & := & \l\{ \kappa \in \mathfrak{I} |
\kappa < \iota,~ \kappa_{\sigma(\phi,1)}=m_1
%,~ \varphi_r (\kappa) \leq \varphi_r(\iota) ~
~\forall~ \iota \in \mathfrak{I}_{\phi}^{(m_1)} \r\} \bigcup_{m_2=1}^{d_{\rm max}-m_2}
  \mathfrak{I}_\phi^{(m_1,m_2)}, \nn \\
\mathfrak{P}_{\phi}^{(>d_{\rm max})} & := & \l\{ \kappa \in \mathfrak{I} |
\kappa < \iota,
%~ \varphi_r (\kappa) \leq \varphi_r(\iota)~ \forall~ r,
\iota \in \mathfrak{I}_{\phi}^{(>d_{\rm max})} \r\} \bigcup_{m_1+m_2\leq d_{\rm max}}
  \mathfrak{I}_\phi^{(m_1,m_2)} \bigcup_{m_1=1}^{d_{\rm max}}
  \mathfrak{I}_\phi^{(m_1)}. \nn \\
\eea
Since $\mathfrak{P}_{\phi}^{(m_1, m_2)}\subsetneq \mathfrak{P}_{\phi}^{(m_1)}
\subsetneq \mathfrak{P}_{\phi}^{(>d_{\rm max})}$, we have that
$\{\mathfrak{L}_{\phi}^{(m_1, m_2)}\}_{m_1,m_2}$, $\{\mathfrak{L}_{\phi}^{(m_1)}\}_{m_1}$ and
$\mathfrak{L}_{\phi}^{(>d_{\rm max})}$ can be solved in this order to give
inhomogeneous linear systems of rank $\{|\mathfrak{I}_{\phi}^{(m_1, m_2)}|\}_{m_1,m_2}$,
$\{|\mathfrak{I}_{\phi}^{(m_1)}|\}_{m_1}$ and $|\mathfrak{I}_{\phi}^{(>d_{\rm max})}|$
respectively. \\

Let us summarise what we have done so far in this section: the original
\cref{prob:e8num} is equivalent to the $16\times 256=4096$ numerical inhomogeneous systems of linear equations
\beq
\bary{rcrcl}
\mathfrak{L}_\phi^{(m_1,m_2)} & : & \sum_{\kappa\in\mathfrak{I}_\phi^{(m_1,m_2)}}
\l(\cA_\phi^{(m_1,m_2)}\r)_{\iota \kappa} N^{(k)}_{\kappa} & = &
\l(\cB_\phi^{(m_1,m_2)}\r)^{(k)}_\iota, \\
\mathfrak{L}_\phi^{(m_1)} & : & \sum_{\kappa\in\mathfrak{I}_\phi^{(m_1)}}
\l(\cA_\phi^{(m_1)}\r)_{\iota \kappa} N^{(k)}_{\kappa} & = &
\l(\cB_\phi^{(m_1)}\r)^{(k)}_\iota, \\
\mathfrak{L}_\phi^{(>d_{\rm max})} & : &
\sum_{\kappa\in\mathfrak{I}_\phi^{(>d_{\rm max})}}
\l(\cA_\phi^{(>d_{\rm max})}\r)_{\iota \kappa} N^{(k)}_{\kappa} & = &
\l(\cB_\phi^{(>d_{\rm max})}\r)^{(k)}_\iota,
\eary
\label{eq:ABN}
\eeq
where, writing $\aleph$
to indicate any of the symbols $(m_1, m_2)$, $(m_1)$, or $(>d_{\rm max})$,  the matrices $\cA_\phi^\aleph$ (resp. $\cB_\phi^\aleph$) are computed
from the r.h.s. (resp. l.h.s.) of
  \eqref{eq:Lphiref1}--\eqref{eq:Lphiref3} with $\chi_{j}$ (resp.  $\chi_{\wedge
    \mathfrak{e}_8}$ and $\chi_{j}$) evaluated at
  points $Q^{(\iota)}\in \cT_{E_8}$ satisfying \eqref{eq:admQ1}. The
  inhomogeneous piece $(\cB_\phi^\aleph)^{(k)}_\iota$ depends, by
  \eqref{eq:Lphiref1}--\eqref{eq:Lphiref3}, on all $N^{(k)}_{\iota'}$ with
  $\iota'<\iota$: therefore, if $\mathfrak{J}\subset \mathfrak{I}$,
  $\mathfrak{K}\subset \mathfrak{I}$ are any two elements in
  the partition \eqref{eq:partI} of $\mathfrak{I}$, the corresponding
  linear problems in \eqref{eq:ABN} must be solved sequentially if $\mathfrak{J}<\mathfrak{K}$,
  but they can be solved in parallel if  $\mathfrak{J}$ is incomparable with
  $\mathfrak{K}$ under the weak order $<$. Furthermore, the determination and
  solution of \eqref{eq:ABN} is
  compatible with the computability bounds \eqref{eq:bounds}: writing out
  $\mathfrak{L}_\phi^\aleph$ only
  requires the calculation of $Q$-derivatives of $\chi_{\wedge
    \mathfrak{e}_8}$ up to order $d_{\rm max}$, and solving it involves
  inverting linear operators $\cA_\phi^\aleph$  of rank $\ll r_{\rm max}$. Moreover,
since the latter does not depend on $k$, the inversion solves \eqref{eq:ABN} for all $k$ in a
  single go.

  \subsection{The computation of $\mathfrak{L}_\phi^\aleph$}
\label{sec:complphi}
  Let us show how the $\mathfrak{L}_\phi^\aleph$ can be computed in
  practice. Let $u^{(\iota)}\in\bbC^8$ be as in \eqref{eq:admQ1} and let
  $Q^{(\iota)}=Q(u^{(\iota)})$ be any pre-image of $u^{(\iota)}$.  Given $u^{(\iota)}$, the matrices $(\cA_\phi^\aleph)_{\iota\kappa}$ are
  immediately computed as
  \beq
  (\cA_\phi^\aleph)_{\iota\kappa} = \prod_j \l(u_j^{(\iota)}\r)^{\kappa_j}.
  \label{eq:Aphi}
  \eeq
  The calculation of $(\cB_\phi^\aleph)_\iota^{(k)}$ is however significantly
  more involved, as it requires to compute $\chi_{j}-$ derivatives of
  $\chi_{\wedge \mathfrak{e}_8}(Q)$ up to order $d_{\rm max}$ at
  $Q=Q^{(\iota)}$ for all $k$. We describe how this is done in the rest of
  this section.
  
  \subsubsection{Step 1: $Q$-derivatives from $\Gamma(\omega_j)$}

  The first step is to record
  the value of all $Q$-derivatives of $\chi_{\wedge^k \mathfrak{e}_8}$ at
  $Q=Q(u^{(\iota)})$ for $\iota\in\mathfrak{I}_\phi^{\aleph}$ up to a sufficiently high order. To this aim, write
  \beq
  \gamma_{\aleph} = \left\{\bary{cc} m_1+m_2, &  \aleph=(m_1,m_2),\\
  m_1, & \aleph=m_1, \\
  0, & \aleph>d_{\rm max}. \eary \right.
  \eeq
 Let
  \beq
  \mathscr{C}_{k}=\l\{c \in (\bbZ^+_0)^8 \bigg| \sum_i c_i = k\r\}
  \eeq
  be the set of compositions of $k\in \bbZ^+$ of length eight. Consider the
  total order on $\mathscr{C}_{k}$ obtained by sorting ascendingly with respect to the values of the components $c_1,
  \dots, c_8$ taken in this order, and denote $\mathscr{C}_{k}^i$, $i=1, \dots,
  |\mathscr{C}_{k}|=\binom{k+7}{7}$ for the $i^{\rm th}$ element of
  $\mathscr{C}_{k}$ as a totally ordered set. Then what we want to compute,
  for any $c\in\mathscr{C}_{\gamma_\iota}$, is
  \beq
  \DD(c,\iota, V) \triangleq D^c  \chi_{V} (Q(u^{(\iota)}))=
\frac{\de^{|c|}  \chi_{V}}{\de Q_1^{c_1} \dots \de Q_8^{c_8}}(Q(u^{(\iota)})) = \sum_{w\in
    \Gamma(V)} \prod_i (w_i-c_i+1)_{c_i}
Q_i^{w_i-c_i}\bigg|_{Q=Q(u^{(\iota)})},
\label{eq:derQ0}
  \eeq
  for $V=\wedge^k\mathfrak{e}_8$ and $V=V(\omega_j)$, where we wrote $(a)_n$
  for the Pochhammer symbol $\Gamma(a+n)/\Gamma(a)$.

  The sums in \eqref{eq:derQ0} are unwieldy in that form, since we have up to
  $|\Gamma_{\rm red}(V_{\omega_3})|=186481$ summands (weight spaces not
  counted with multiplicties) when computing \eqref{eq:derQ0} for the
  fundamental representations
  $V=V(\omega_j)$, and up to $|\Gamma_{\rm
    red}(V_{\wedge^{120}\mathfrak{e}_8})|\approx 10^{17}$ for the exterior
  powers of the adjoint, $V=\wedge^{k}\mathfrak{e}_8$. It
  is however possible to reduce the calculation of \eqref{eq:derQ0} to sums
  having at most $|\Gamma_{\rm red}(\omega_1)|=2401$, $|\Gamma_{\rm
    red}(\omega_7)|=|\Delta^+ \cup \Delta^- \cup \mathbf{0}|=241$, or $|\Gamma_{\rm red}(\omega_8)|=26401$ terms, as follows. To compute
  \eqref{eq:derQ0} for $V=\wedge^{k}\mathfrak{e}_8$, we first evaluate the virtual power sum characters
  \beq
  \tilde{\DD}_{k,j}(c,\iota) \triangleq
  \frac{\de^{\gamma_\iota}  \chi_{\mathfrak{e}_8}}{\de Q_1^{c_1} \dots \de
    Q_8^{c_8}}(Q^k(u^{(\iota)})) = \sum_{w\in \Gamma(\omega_j)} \prod_i (w_i-c_i+1)_{c_i}
Q_i^{k w_i-c_i}\bigg|_{Q=Q(u^{(\iota)})},
\label{eq:derQpowk}
  \eeq
  and then recursively compute \eqref{eq:derQ0} using Newton identities:
  \beq
  \DD(c,\iota, \wedge^k V(\omega_j)) =  \sum_{m=1}^k (-1)^{m+1}
  \sum_{c'+c''=c}  \l(\prod_l \binom{c_l}{c'_l}\r)   \tilde{\DD}_{m,j}(c',\iota)
  \DD(c'',\iota, \wedge^{k-m} V(\omega_j)).
\label{eq:derQwedk}
  \eeq
  for $j=7$. To evaluate the fundamental characters $\DD(c,\iota, V(\omega_i))$, we compute
  directly from \eqref{eq:derQ0} for the three smallest-dimensional fundamental representations
  corresponding to $i=1,7,8$; for the remaining five, we make use of the
  following identities in $\mathrm{Rep}(E_8)$, which can be easily proved in the
  same vein of \cref{ex:dec}:
  \bea
  \label{eq:Vom6}
  V_{\omega_6} &=& \wedge^2 \mathfrak{e}_8 \ominus \mathfrak{e}_8,\\
  %  	  trf[4][n]=trl[1][n]-trl[0][n];
  V_{\omega_5} &=& \wedge^3 \mathfrak{e}_8 \oplus \mathfrak{e}_8 \ominus
  \mathfrak{e}_8 \otimes \mathfrak{e}_8,\\
  %trf[3][n]=trl[2][n]+trl[0][n]-distrder(n, trl[0], trl[0]);
  V_{\omega_4} &=& \wedge^4 \mathfrak{e}_8 \oplus \wedge^3 \mathfrak{e}_8\oplus \wedge^2 \mathfrak{e}_8 \ominus
  \wedge^2 \mathfrak{e}_8\otimes \mathfrak{e}_8 \ominus V_{\omega_5},\\
%	  trf[2][n]=trl[3][n]+trl[2][n]+trl[1][n]-distrder(n, trl[0],
  %trl[1])-trf[3][n];
    V_{\omega_3} &=& \wedge^5 \mathfrak{e}_8 \oplus 2 \wedge^4
    \mathfrak{e}_8\oplus 2 \wedge^2 \mathfrak{e}_8 \ominus \mathfrak{e}_8 \ominus
  \wedge^3 \mathfrak{e}_8\otimes \mathfrak{e}_8 \ominus V_{\omega_4},\\
%	  trf[1][n]=trl[4][n]+2*trl[3][n]+2*trl[1][n]-trl[0][n]-distrder(n, trl[0], trl[2])-2*trf[2][n];
    V_{\omega_2} &=& \wedge^2 V(\omega_1) \oplus V_{\omega_1}-
    V_{\omega_1}\otimes\mathfrak{e}_8 + V_{\omega_8}.
      \label{eq:Vom2}
%    trf[0][n]=trl[6][n]+trl[5][n]-distrder(n, trl[0], trl[5])+trl[7][n];
  \eea
  Let now
  \beq
  m^{\rm max}_j = \l\{ \bary{cc} 2 & j=1, \\ 5 & j=7, \\ 1 & j=8. \eary \r.
  \eeq
 $\DD(c,\iota, V(\omega_j))$, $j=2,\dots, 6$ can then be calculated by
  \eqref{eq:Vom6}--\eqref{eq:Vom2} knowing $\DD(c,\iota,
  \wedge^{m^{\rm max}_j}
  V(\omega_j))$ with $j=1,7,8$; and the latter is computed by
  \eqref{eq:derQ0}--\eqref{eq:derQwedk} as an explicit polynomial in
  $\tilde{\DD}_{k,j}(c,\iota)$, $k=1, \dots, m^{\rm max}_j$,  for the same
  values of $j$. All in all, we have proved that there exist explicit polynomials over the
  rationals such that
%  \begin{prop}
  \bea
  \DD(c,\iota, V(\omega_j)) & \in & \bbQ\l[\l\{ \tilde{\DD}_{1,1}(c,\iota),\tilde{\DD}_{2,1}(c,\iota),\{
       \tilde{\DD}_{l,7}(c,\iota)\}_{l=1}^{5},
       \tilde{\DD}_{1,8}(c,\iota)\r\}_{c\in\mathscr{C}_{\delta_i}}\r], \nn \\
    \DD(c,\iota, \wedge \mathfrak{e}_8) & \in &
    \bbQ\l[\l\{\tilde{\DD}_{l,7}(c,\iota)\r\}_{\stackrel{l=1,\dots,120}{c\in\mathscr{C}_{\delta_i}}}\r].
        \label{prop:step1}
  \eea
%     \end{prop}
\subsubsection{Step 2: $\chi_{\omega}$-derivatives, graph expansions and the Fa\`a di Bruno formula}
\label{sec:faa}

The second step consists of computing $\chi_{j}$-derivatives of
$\chi_{\wedge \mathfrak{e}_8}(g)$ at $\chi_{j}(g)=u^{(\iota)}$ from knowledge
of its $Q$-derivatives at $Q=Q(u^{(\iota)})$. To start with, let
\beq
\cJ_{ij} \triangleq \l(\frac{\de \chi_{i}}{\de Q_j}\r) \in \mathrm{Mat}\big(8,\bbC[Q^\pm]\big)
\label{eq:jac}
\eeq
denote the Jacobian matrix of the fundamental characters $\chi_{i}$ with respect
to the exponentiated linear co-ordinates $Q_j$, which we assume here to be
non-singular at $Q(u^{(\iota)})$ for all $\iota$. Let again
$\iota\in\mathfrak{I}$, suppose $\gamma_\iota >0$ and let
$c\in\mathscr{C}_{\gamma_\iota-1}$. What we want to compute is the numerical
matrix
\beq
\cJ^{\rm inv}_{c,\iota} \triangleq \frac{\de^{|c|} \cJ^{-1}}{\de Q_1^{c_1}\dots
  \de Q_8^{c_8}}\Bigg|_{Q=Q(u^{(\iota)})}.
\eeq
Iterated differentiation of
\beq
\frac{\de \cJ^{-1}}{\de Q_j} = -  \cJ^{-1} \frac{\de \cJ}{\de Q_j}  \cJ^{-1}.
\eeq
leads to the following easy
\begin{prop}
  \label{prop:dJinv}
  We have
\beq
\cJ^{\rm inv}_{c,\iota} = \sum_{r=1}^{|c|} \sum_{d^{(1)}+\dots d^{(r)} = c} (-1)^r \cJ^{\rm
  inv}_{0,\iota} \prod_{l=1}^r
\cJ_{d^{(r)},\iota} ~\cJ^{\rm inv}_{0,\iota},
\label{eq:Jinviota}
\eeq
where we defined for $d\leq c \in\mathscr{C}_{\gamma_\iota-1}$
\bea
\cJ_{d,\iota} &\triangleq& \frac{\de^{|d|} \cJ}{\de Q_1^{d_1}\dots \de
  Q_8^{d_8}}\Bigg|_{Q=Q(u^{(\iota)})}.
\eea
\end{prop}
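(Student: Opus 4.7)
The formula is a non-commutative analogue of the (ordered, set-partition) Faà di Bruno expansion for derivatives of the composition $Q \mapsto \cJ(Q) \mapsto \cJ(Q)^{-1}$. I would prove it by induction on the total order $|c|$, feeding the base identity $\de_{Q_j}\cJ^{-1} = -\cJ^{-1}(\de_{Q_j}\cJ)\cJ^{-1}$ recursively into the Leibniz rule.

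The base case $|c|=1$ is precisely the displayed identity used to state the proposition, with the sole term $r=1$, $d^{(1)}=c=e_k$ and the two copies of $\cJ^{\rm inv}_{0,\iota}$ evaluated at $Q(u^{(\iota)})$. For the inductive step, assume the formula for all multi-indices $c$ of weight $|c|=n$ and differentiate \eqref{eq:Jinviota} once more, applying $\de/\de Q_k$ and evaluating at $Q=Q(u^{(\iota)})$. By the Leibniz rule, the derivative of a generic summand
\[
(-1)^r \cJ^{\rm inv}_{0,\iota}\,\cJ_{d^{(1)},\iota}\,\cJ^{\rm inv}_{0,\iota}\,\cJ_{d^{(2)},\iota}\,\cJ^{\rm inv}_{0,\iota}\cdots\cJ_{d^{(r)},\iota}\,\cJ^{\rm inv}_{0,\iota}
\]
splits into two families: \emph{(a)} the derivative hits one of the $r$ factors $\cJ_{d^{(l)},\iota}$, promoting $d^{(l)}\mapsto d^{(l)}+e_k$, so that $|d^{(l)}|$ increases by one while the length $r$ of the composition is preserved; \emph{(b)} the derivative hits one of the $r+1$ factors $\cJ^{\rm inv}_{0,\iota}$, and using the base identity this produces $-\cJ^{\rm inv}_{0,\iota}\cJ_{e_k,\iota}\cJ^{\rm inv}_{0,\iota}$ in place of that factor, thereby inserting a new block $\cJ_{e_k,\iota}$ between two $\cJ^{\rm inv}_{0,\iota}$'s and raising the length to $r+1$ and the sign from $(-1)^r$ to $(-1)^{r+1}$.

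The remainder of the argument is a bijective bookkeeping: ordered compositions $(\tilde d^{(1)},\dots,\tilde d^{(\tilde r)})$ of $c+e_k$ of length $\tilde r$ are produced exactly once from family \emph{(a)} (by selecting which part of a composition of $c$ of length $\tilde r$ to augment by $e_k$) and exactly once from family \emph{(b)} (by selecting in which of the $\tilde r$ possible gaps of a composition of $c$ of length $\tilde r - 1$ to insert the new block $e_k$). Summing both contributions rebuilds all ordered compositions of $c+e_k$ with the correct sign $(-1)^{\tilde r}$, which is precisely \eqref{eq:Jinviota} with $c$ replaced by $c+e_k$.

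\textbf{Main obstacle.} There is nothing analytically subtle here — $\cJ^{\rm inv}_{0,\iota}$ is well-defined by the standing non-degeneracy hypothesis on $\cJ$ at $Q(u^{(\iota)})$, and all series are finite. The only real care lies in the combinatorial matching of step \emph{(b)}: one must verify that inserting a singleton part $e_k$ in the $\tilde r$ possible positions of an ordered composition of $c$ of length $\tilde r-1$ generates each ordered composition $(\tilde d^{(1)},\dots,\tilde d^{(\tilde r)})$ of $c+e_k$ with a singleton equal to $e_k$ in position $l$ exactly once, and that the signs $(-1)^r$ and $(-1)^{r+1}$ in the two families combine correctly into a uniform $(-1)^{\tilde r}$. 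Once this bookkeeping is carried out the induction closes immediately.
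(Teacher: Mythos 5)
Your overall strategy is the same as the paper's: the paper offers nothing beyond the remark that iterated differentiation of $\de_{Q_j}\cJ^{-1}=-\cJ^{-1}(\de_{Q_j}\cJ)\,\cJ^{-1}$ "leads to" \eqref{eq:Jinviota}, and your Leibniz induction is the natural way to make that precise. The gap is in the final bookkeeping claim, which is exactly what is supposed to close the induction, and it is wrong as stated. It is internally inconsistent: if every ordered composition of $c+e_k$ arose exactly once from family \emph{(a)} \emph{and} exactly once from family \emph{(b)}, it would carry total coefficient $2(-1)^{\tilde r}$ rather than $(-1)^{\tilde r}$. More importantly, the multiplicities genuinely exceed one as soon as indices repeat. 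Take $c=e_i$ and differentiate once more in $Q_i$: family \emph{(a)} contributes $-\cJ^{\rm inv}_{0,\iota}\cJ_{2e_i,\iota}\cJ^{\rm inv}_{0,\iota}$, while family \emph{(b)} produces the composition $(e_i,e_i)$ twice, once for each of the two copies of $\cJ^{\rm inv}_{0,\iota}$ the derivative can hit, so that
\begin{equation}
\frac{\de^{2}\cJ^{-1}}{\de Q_i^{2}}\bigg|_{Q=Q(u^{(\iota)})}
= 2\,\cJ^{\rm inv}_{0,\iota}\cJ_{e_i,\iota}\cJ^{\rm inv}_{0,\iota}\cJ_{e_i,\iota}\cJ^{\rm inv}_{0,\iota}
-\cJ^{\rm inv}_{0,\iota}\cJ_{2e_i,\iota}\cJ^{\rm inv}_{0,\iota},
\end{equation}
whereas a unit-weight sum over multi-index compositions, which is what your matching argument aims to reproduce, would assign the first term coefficient $1$.

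In general a target composition $(\tilde d^{(1)},\dots,\tilde d^{(\tilde r)})$ of $c+e_k$ is reached once for every part whose $k$-th entry is nonzero: through \emph{(a)} when that part differs from $e_k$, through \emph{(b)} when it equals $e_k$. Iterating, the honest induction attaches to each composition the multinomial weight $\prod_i\binom{c_i}{d^{(1)}_i,\dots,d^{(r)}_i}$; equivalently, the sum over $d^{(1)}+\dots+d^{(r)}=c$ must be understood as running over ordered distributions of the $|c|$ individual derivative operators among the $r$ blocks. That is also the only reading under which \eqref{eq:Jinviota} itself is consistent with the Leibniz rule (with literal unit weights it already fails the $\de_i^2$ check above), and the same convention has to be carried into the norm bound \eqref{eq:Jinvnorm} derived from it. So you should either prove the weighted statement — verifying that the multinomial weights satisfy the Pascal-type recursion generated by your moves \emph{(a)} and \emph{(b)} — or set the induction up at the level of single derivatives applied in sequence, where no counting is needed and the weights emerge automatically. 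As a side remark, the factor $\cJ_{d^{(r)},\iota}$ inside the product in \eqref{eq:Jinviota} should clearly read $\cJ_{d^{(l)},\iota}$; your argument implicitly corrects this.
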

%\begin{proof}
%The claim follows straightforwardly from iterated differentiation of
%
%\beq
%\frac{\de \cJ^{-1}}{\de Q_j} = -  \cJ^{-1} \frac{\de \cJ}{\de Q_j}  \cJ^{-1}.
%\eeq
%
%\end{proof}
On the r.h.s. of \eqref{eq:Jinviota}, $\cJ^{\rm inv}_{0,\iota}$ is computed from
$\DD(d,\iota,V(\omega_j))$ with $|d|=1$; the latter gives the numerical expression
for the Jacobian matrix $\cJ$ at $Q=Q(u^{(\iota)})$, and $\cJ^{\rm
  inv}_{0,\iota}$ is by definition its inverse. And as for
$\cJ_{d^{(r)},\iota}$, its entries are just given by values
$\DD(d,\iota,V(\omega_j))$ for $|d|=1, \dots, \gamma_\iota$: therefore
$\cJ^{\rm inv}_{c,\iota}$ is computed by \eqref{eq:Jinviota} as an explicit
polynomial expression in $\DD(d,\iota,V(\omega_j))$ and $\cJ_{0,\iota}^{\rm
  inv}$, which in turn are determined by $\tilde{\DD}_{k,j}(c,\iota)$, $k=1, \dots, m^{\rm
  max}_j$, $j=1,7,8$, all of which were computed in Step 1 above.

%{\bf Perhaps break up the above with a further Step}

What we will do next, armed with $\cJ^{\rm inv}_{c,\iota}$ computed as in \eqref{eq:Jinviota}, is to
convert\footnote{Stated more invariantly, we want to look at the co-ordinate
  expression, in the chart induced by the exponentiated linear co-ordinates
  $Q$ on $\cT_{E_8}$, of $\mathfrak{D}^c$ as a map
  $J^{|c|}_{u^{(\iota)}} \cT_{E_8} \to \bbC$
  from the fibre at $Q=Q(u^{(\iota)})$ of the $|c|^{\rm th}$ jet bundle on
  $\cT_{E_8}$ to $\bbC$.} the differential
operator $\mathfrak{D}^c \triangleq \de^{|c|}_{\de \chi_{1}^{c_1} \dots \chi_{8}^{c_8}}$
  into a differential operator in $(Q_1, \dots, Q_8)$ at $Q=Q(u^{(\iota)})$;
  at the end of the day this will provide a presentation of the
   Fa\`a di Bruno formula for the partial derivatives of
   composite functions in several variables, which appears to be new in this
   form. We start by the following
   \begin{defn}
     \label{def:FdB}
     For $k_L,k_R, n\in\bbN$, an $n-$decorated {\rm Fa\`a di Bruno graph} $\mathsf{G}$ of order $(k_L,k_R)$ is a
     decorated, ordered, oriented graph $(V=V_L \cup V_R, E)$, satisfying
     \ben
      \item $|V_L|=k_L$, $|V_R|=k_R \leq k_L$;
      \item $V_L=\{v_1^{(L)},\dots, v_k^{(L)}\}$ with $v_i^{(L/R)} \geq
        v_j^{(L/R)} \Leftrightarrow i\leq j$, $v_i^{(L)}\leq v_j^{(R)} ~\forall i,j$;
      \item every vertex in $V_L$ ($V_R$) has exactly one outgoing (incoming)
        oriented edge attached to it;
        \item any $v\in V_R$ is a leaf;
     \item if $v_i^{(L)}$ is adjacent to $v_j^{(R)}$, $v_s^{(L)}$ is adjacent
       to $v_t^{(R)}$, and $i<s$, then $j<t$;
       \item if $v_i^{(L)}$ is adjacent to $v_j^{(L)}$, and $i>j$, then the
         attaching edge is oriented from $j$ to $i$;
      \item $i:V_L \to \{1, \dots, n\}$.
       \een
     \end{defn}
   A helpful way to visualise the definition above is the following (see
   \cref{fig:faa}): the datum of $\mathsf{G}$ 
   gives two sets of vertices, $V_L$ and $V_R$, which we can arrange in two
   vertical columns in the plane; the number of left vertices is greater than
   that of right vertices (Property 1,2). Each left vertex has exactly one outgoing arrow emanating
   from it (Property 3), which can be of either of two types: a vertical arrow ending on another left vertex higher up in the
   column (Property 6), or a horizontal arrow ending on a right vertex; horizontal arrows
   are not allowed to cross (Property 5). Left vertices are decorated with a
   positive integer less than or equal to $n$.
   %right vertices are
 %  decorated with the second element of the pair of the unique left vertex they are
   %   connected to (Property 7).

   We denote $\mathrm{FG}_{k_L,k_R,n}$ the set of $n$-decorated Fa\`a di Bruno graphs of order
   $(k_L,k_R)$. The graphs for $k_L=3$ are depicted in \cref{fig:faa}; in
   general, there are $k_L!$ un-decorated graphs with fixed value of $k_L$.
   %We are now ready to state the following

   \begin{figure}[t]
     \input{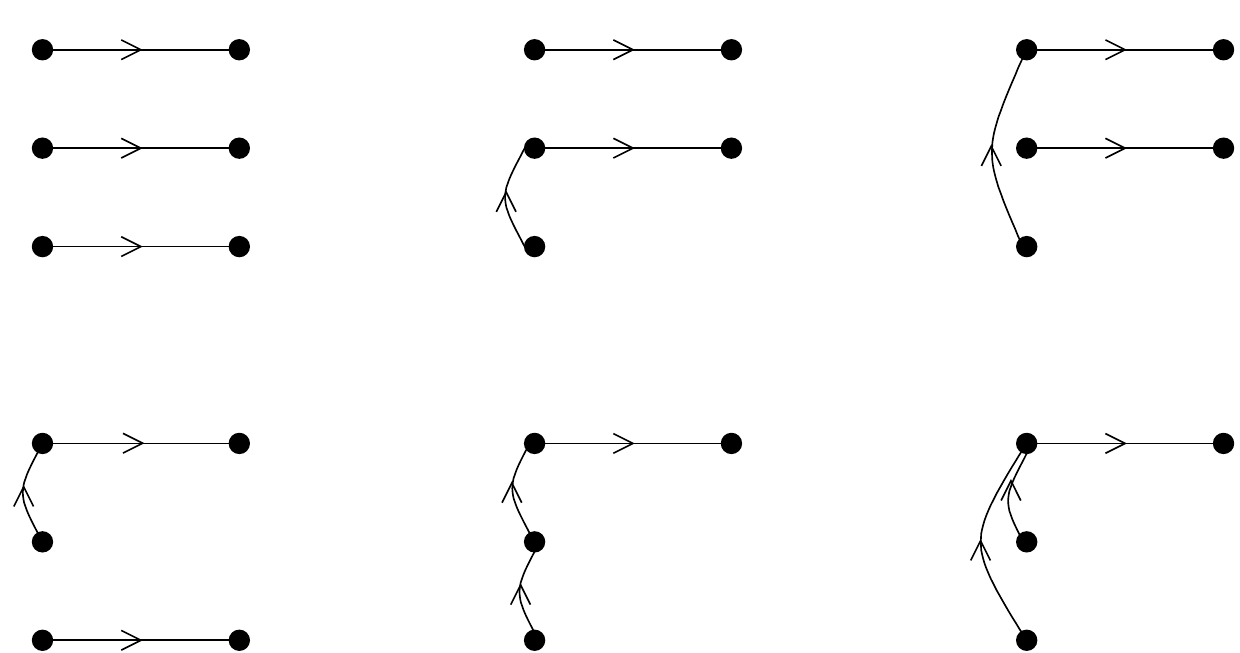_t}
     \caption{The Fa\`a di Bruno graphs for $k_L=3$.}
     \label{fig:faa}
   \end{figure}

   \begin{prop}[Multi-variate Fa\`a di Bruno formula]
     \label{prop:FdB}
    Let $\delta>0$, $\mathsf{x}_0\in \bbR^n$, $k_L\in\bbN$, and $(f_l)_l \in C^{k_L}(B_\delta(x_0))$ ($l=1, \dots,
     n$), and suppose $\det\mathscr{J}\neq 0$ with $\mathscr{J}_{ij}=\de_{x_i}
     f_k(\mathsf{x}_0)$ in local co-ordinates $x_i$ around $\mathsf{x}_0$.
     Then, for $\epsilon:=\mathrm{sup}_{\mathsf{x}\in (B_\delta(x_0))}\|f(\mathsf{x})-f(\mathsf{x}_0) \| $
and $F \in C^{k_L}(B_\epsilon(f(x_0))$, we have
\beq
\frac{\de^{k_L}F}{\de f_{i_1}\dots \de f_{i_{k_L}}} = \sum_{\stackrel{k_R\leq k_L}{\mathsf{G}\in \mathrm{FG}_{k_L,k_R,n}}} \sum_{j_1, \dots, j_{k_L}=1}^n \prod_{r=1}^{k_L}
   \l(\prod_{v_l^{(L)}\in \mathrm{In}(v_r^{(L)})} \de_{x_{j_l}}\r) \mathscr{J}^{-1}_{i_r, j_r}
   \prod_{s=1}^{k_R} \l(\prod_{v_m^{(L)}\in \mathrm{In}(v_s^{(R)})}
   \de_{x_{j_m}} (F \circ f)\r)
   \label{eq:FdB}
   \eeq
where for $v\in V(\mathsf{G})$, $\mathsf{G}\in\mathrm{FG}_{k_L,k_R,n}$, we
indicate by $\mathrm{In}(v)\subset V_L$ as
the set of tails of arrows having $v$ as their head.
   
   \end{prop}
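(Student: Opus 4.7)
The plan is to proceed by induction on $k_L$, starting from the chain rule supplied by the inverse function theorem. Since $\det\mathscr{J}\neq 0$ at $\mathsf{x}_0$, the map $f$ is a local diffeomorphism on $B_\delta(\mathsf{x}_0)$, and one has
\[
\frac{\partial}{\partial f_i} = \sum_{j=1}^n \mathscr{J}^{-1}_{ij}\, \frac{\partial}{\partial x_j}.
\]
Applied to $F \circ f$, this gives the base case $k_L=1$, matching the unique graph in $\mathrm{FG}_{1,1,n}$: a single left vertex joined horizontally to a single right vertex.

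For the inductive step, I would apply $\partial/\partial f_{i_{k_L}}$ to both sides of the order $k_L-1$ formula and expand via the Leibniz rule. The outermost operator is rewritten by the chain rule as $\sum_{j_{k_L}} \mathscr{J}^{-1}_{i_{k_L}, j_{k_L}}\, \partial/\partial x_{j_{k_L}}$, which introduces a new $\mathscr{J}^{-1}$ factor and a new partial derivative; the latter must act, by Leibniz, on exactly one of the factors already present in the inductive expression. If it lands on a previously introduced $\mathscr{J}^{-1}_{i_l, j_l}$ with $l < k_L$, this corresponds to adjoining a new left vertex $v_{k_L}^{(L)}$ to the inductive graph and drawing an upward-oriented vertical arrow from $v_l^{(L)}$ to $v_{k_L}^{(L)}$. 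If it lands on the $(F\circ f)$ factor sitting at an existing right vertex $v_s^{(R)}$, the corresponding graph operation is to extend $\mathrm{In}(v_s^{(R)})$ by the new vertex $v_{k_L}^{(L)}$; if it lands on the innermost undifferentiated copy of $F\circ f$, a fresh right vertex $v_{k_R+1}^{(R)}$ is adjoined.

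The bulk of the argument would then be a careful combinatorial check: the extension operation just described realises a bijection between the terms of the Leibniz-expanded inductive formula and the disjoint union $\bigsqcup_{k_R\leq k_L} \mathrm{FG}_{k_L, k_R, n}$. The downward-ordering condition on left-to-left edges (Property 6 of \cref{def:FdB}) reflects directly that $\partial_{x_{j_{k_L}}}$ was introduced last and can only act on $\mathscr{J}^{-1}$ factors of strictly lower index; the non-crossing condition on left-to-right edges (Property 5) captures the fact that when two partial derivatives land on distinct accumulated $(F\circ f)$ slots, the earlier-introduced partial attaches to the earlier-labelled right vertex. Summing over the vertex decoration $i : V_L \to \{1,\dots,n\}$ then reproduces the summation over $j_1, \dots, j_{k_L}$ in \eqref{eq:FdB}.

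The principal obstacle lies in verifying that this bijection is exactly one-to-one: every graph in $\mathrm{FG}_{k_L, k_R, n}$ must be obtainable from a unique extension of a unique graph of order $(k_L-1, k_R')$ with $k_R' \in \{k_R, k_R-1\}$, and conversely every such extension must produce a graph satisfying all of Properties~1--7 of \cref{def:FdB}. Once this is settled, the rearrangement of the expanded sum into the product form $\prod_r \bigl(\prod_{v_l^{(L)} \in \mathrm{In}(v_r^{(L)})} \partial_{x_{j_l}}\bigr) \mathscr{J}^{-1}_{i_r, j_r} \cdot \prod_s \bigl(\prod_{v_m^{(L)} \in \mathrm{In}(v_s^{(R)})} \partial_{x_{j_m}}\bigr)(F\circ f)$ follows by collecting the derivatives that end up acting on each factor; analytic validity, i.e.\ the convergence and smoothness of all quantities involved on $B_\delta(\mathsf{x}_0)$, is ensured by the $C^{k_L}$ hypothesis on $F$ and the $f_l$ together with the local non-degeneracy of $\mathscr{J}$.
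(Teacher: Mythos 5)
Your approach is essentially the same as the paper's: both proofs rewrite $\partial/\partial f_i = \sum_j \mathscr{J}^{-1}_{ij}\,\partial_{x_j}$, then track Leibniz-rule terms by encoding "which factor each $\partial_{x_j}$ lands on" as a Fa\`a di Bruno graph. The paper does this in a single pass by writing the full ordered product $\overrightarrow{\prod_{l=1}^{k_L}}\big(\mathscr{J}^{-1}_{i_l j_l}\partial_{x_{j_l}}\big)$ and assigning one graph to each possible pattern of hits, whereas you build up one derivative at a time by induction; the direct expansion makes the graph-to-term correspondence manifest without a separate bijection lemma, which is exactly the step you flag as "the principal obstacle."

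Beyond the acknowledged gap, there is a concrete bookkeeping inconsistency in your inductive step. When you apply $\partial/\partial f_{i_{k_L}}$ to the order-$(k_L-1)$ formula, the new operator sits leftmost in the composition, which in the ordering of \cref{def:FdB} (Property 2: $v_i^{(L)} \geq v_j^{(L)} \Leftrightarrow i \leq j$) corresponds to the \emph{top} vertex, i.e.\ index $1$ after relabelling — not to $v_{k_L}^{(L)}$. Moreover, by the interpretation implicit in \eqref{eq:FdB}, an arrow $v_a^{(L)} \to v_b^{(L)}$ means that $\partial_{x_{j_a}}$ acts on $\mathscr{J}^{-1}_{i_b, j_b}$; so when the newly introduced $\partial_{x_{j_{k_L}}}$ hits a pre-existing $\mathscr{J}^{-1}_{i_l, j_l}$, the arrow must run \emph{from} the new vertex \emph{to} $v_l^{(L)}$, not the other way around as you write ("from $v_l^{(L)}$ to $v_{k_L}^{(L)}$"). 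With your current labelling that would be an arrow from a larger to a smaller index, contradicting Property 6; and calling it "upward-oriented" contradicts Property 2, since higher index means lower position. These are fixable by relabelling the new vertex as $v_1^{(L)}$ and incrementing the indices of the pre-existing left and right vertices, at which point your description of the extension operations becomes consistent with \cref{def:FdB}; but as written the inductive graph operations do not land in $\mathrm{FG}_{k_L,k_R,n}$.
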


   \begin{proof}
    The claim follows by keeping track of the combinatorics of the Leibniz
    formula for the composition of linear differential operators of the form
    $\de_{f_i}=\sum_j (\mathscr{J})^{-1}_{ij}\de_{x_j}$: this can be encoded into
    graphical rules that boil down to the properties of Fa\`a di Bruno graphs
    in \cref{def:FdB}. Consider the order
    $k_L$ differential operator
    \beq
    \frac{\de^{k_L}}{\de f_{i_1}\dots \de
      f_{i_{k_L}}}\Bigg|_{f(\mathsf{x}_0)}=  \prod_{l=1}^{k_L}\frac{\de}{\de
      f_{i_l}}\Bigg|_{f(\mathsf{x}_0)}=\sum_{j_1,\dots j_{k_L}}
    \tilde{\mathscr{J}}_{i_1,j_1, \dots, i_{k_L}, j_{k_L}},
    \label{eq:sumscrJ}
    \eeq
    where
    \beq
       \tilde{\mathscr{J}}_{i_1,j_1, \dots, i_{k_L}, j_{k_L}} \triangleq \overrightarrow{\prod_{l=1}^{k_L}}\l( \mathscr{J}^{-1}_{i_l j_l}\frac{\de}{\de
         x_{j_l}}\r)\Bigg|_{\mathsf{x}_0}
       \label{eq:scrJ}
      \eeq
 and the order of the product in \eqref{eq:scrJ} has been chosen from left
    to right (the sum over $j_l$ in \eqref{eq:sumscrJ} yet making any particular choice of ordering immaterial, as the
    l.h.s. is obviously symmetric in $i_1, \dots i_{k_L}$).  To each factor
    in the ordered product \eqref{eq:scrJ} we associate a vertex $v_l^{(L)}$, $l=1, \dots, k_L$ in the
    plane, and we align the set of vertices vertically and order them from
    bottom to top corresponding to the rightwards order in
    \eqref{eq:scrJ}. Then for  each factor in \eqref{eq:scrJ} is a linear
    differential operator $ \mathscr{J}^{-1}_{i_l j_l}\frac{\de}{\de
         x_{j_l}}$ which can act on the factors to its right in either of the two following ways:
    \bit
    \item it can hit $\mathscr{J}^{-1}_{i_m j_m}$, $m>l$: in that case we draw an
      oriented edge from $v_l^{(L)}$ to $v_m^{(L)}$;
    \item it can move across all Jacobian factors to its right; in that case
      we draw a leaf $v_{\sigma_l}^{(R)}$ next to $v_{l}^{(L)}$ to its right, with an oriented
      edge from the latter to the former. The labels $\sigma_l$ for the
      right vertices $v_{\sigma_l}^{(R)}$ are chosen to respect the order of
      the left vertices and to take consecutive values from $1$ to $k_R$ for
      some positive integer $k_R \neq k_L$.
    \eit
    Each of these possiblities defines a Fa\`a di Bruno graph with a
    decoration of the left vertices by $i_l$, the row index of $\mathscr{J}^{-1}_{i_l,j_l}$. The Leibniz rule states that \eqref{eq:scrJ} reduces to a sum over all
    these possibilities, each giving an order $k_R$ differential operator
    whose coefficients are products of derivatives of $\mathscr{J}^{-1}$, with the orders of
    differentiation of the factors summing up to $k_L-k_R$. Plugging the resulting
    sum over $k_R$ and $\mathrm{FG}_{k_L, k_R,n}$ into \eqref{eq:sumscrJ} and
    acting on $F\in C^{k_L}(B_\epsilon(f(\mathsf{x}_0))$ gives \eqref{eq:FdB}.

   \end{proof}
  %, and $c_v\in\bbN^8$ as
  %
%  \beq
%  (c_v)_i = \sum_{w\in \mathrm{In}_v\subset V_L} \delta_{i\phi(w)}.
%  \eeq
  %
%  We then write
%   %
%   \beq
%   \mathscr{J}_{\mathsf{G}} 
   %   %

   \begin{cor}
     We have
     \beq
     (\cB_\phi^\aleph)_\iota^{(k)}\in
     \bbQ\l[\l\{ \tilde{\DD}_{1,1}(c,\iota),\tilde{\DD}_{2,1}(c,\iota),\{
       \tilde{\DD}_{l,7}(c,\iota)\}_{l=1}^{\mathrm{max}(k,5)},
       \tilde{\DD}_{1,8}(c,\iota)\r\}_{c\in\mathscr{C}_{\delta_i}},\cJ_{0,\iota}^{\rm
         inv}\r]
     \eeq
     \label{cor:Bphi}
     \end{cor}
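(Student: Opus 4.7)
The plan is to combine \cref{prop:FdB}, \cref{prop:dJinv}, and the polynomial identities collected in \eqref{prop:step1}, keeping careful track of the orders of differentiation that can appear.

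First, by inspection of \eqref{eq:Lphiref1}--\eqref{eq:Lphiref3}, $(\cB_\phi^\aleph)_\iota^{(k)}$ is a linear combination over $\bbQ+\ri\bbQ$ of a single partial derivative $\mathfrak{D}^c\chi_{\wedge^k\mathfrak{e}_8}$ of order $|c|\leq d_{\rm max}$ evaluated at $Q(u^{(\iota)})$, and of previously-determined rational coefficients $N^{(k)}_{\kappa}$ with $\kappa<\iota$ multiplied by rational monomials in the components of $u^{(\iota)}$. Proceeding by strict-weak induction on $\iota$ under the ordering \eqref{eq:preord1}, the contribution coming from the lower-order coefficients lies in $\bbQ$, so the corollary reduces to showing that the pure jet $\mathfrak{D}^c\chi_{\wedge^k\mathfrak{e}_8}\big(Q(u^{(\iota)})\big)$ lies in the asserted ring.

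Next I would apply \cref{prop:FdB} with $x_j=Q_j$, $f_i=\chi_i$, $F=\chi_{\wedge^k\mathfrak{e}_8}$, and base point $\mathsf{x}_0=Q(u^{(\iota)})$. Formula \eqref{eq:FdB} then expresses $\mathfrak{D}^c\chi_{\wedge^k\mathfrak{e}_8}(Q(u^{(\iota)}))$ as a finite $\bbZ$-linear combination of products of two classes of quantities: (a) $Q$-derivatives $\DD(d,\iota,\wedge^k\mathfrak{e}_8)$ of order $|d|\leq |c|\leq d_{\rm max}$, and (b) entries of $\cJ^{\rm inv}_{d',\iota}$ with $|d'|\leq d_{\rm max}-1$. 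Invoking \eqref{prop:step1} for class (a) places each $\DD(d,\iota,\wedge^k\mathfrak{e}_8)$ inside $\bbQ\big[\{\tilde{\DD}_{l,7}(c,\iota)\}_{1\leq l\leq k,\, c\in\mathscr{C}_{\delta_i}}\big]$, the truncation at $l\leq k$ coming from the Newton-identity recursion \eqref{eq:derQwedk}. For class (b), \cref{prop:dJinv} expresses each $\cJ^{\rm inv}_{d',\iota}$ polynomially in $\cJ^{\rm inv}_{0,\iota}$ and in the entries of $\cJ_{d'',\iota}$ with $|d''|\leq d_{\rm max}-1$; the entries of the latter are values of the form $\DD(e,\iota,V_{\omega_j})$ with $|e|\leq d_{\rm max}$, which by \eqref{prop:step1} lie in $\bbQ\big[\tilde{\DD}_{1,1},\tilde{\DD}_{2,1},\{\tilde{\DD}_{l,7}\}_{l=1}^{5},\tilde{\DD}_{1,8}\big]$ with multi-indices in $\mathscr{C}_{\delta_i}$.

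Assembling (a) and (b) inside the Fa\`a di Bruno expansion yields the claimed inclusion, with the upper bound $\max(k,5)$ on the power-sum index $l$ of $\tilde{\DD}_{l,7}$ reflecting both sources: the range $l\leq k$ coming from the adjoint exterior power, and the range $l\leq 5$ from the fundamental characters entering through the Jacobian reductions. The main delicate point I expect is the bookkeeping argument verifying that no summand produced either by \eqref{eq:Jinviota} or by \eqref{eq:derQwedk} introduces a $\tilde{\DD}_{l,j}$ with $l>\max(k,5)$, or with a composition argument outside $\mathscr{C}_{\delta_i}$. This should follow from the constraint $\sum_{l=1}^{r}|d^{(l)}|=|c|\leq d_{\rm max}-1$ appearing in \cref{prop:dJinv} together with the Newton truncation $m\leq k$ in \eqref{eq:derQwedk}, but it is where the uniform-in-$k$ structure of the bound genuinely needs to be checked term by term.
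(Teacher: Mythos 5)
Your proposal is correct and follows essentially the same route as the paper, whose proof simply cites \cref{prop:FdB}, \cref{prop:dJinv} and the Step-1 expressions \eqref{prop:step1} and notes that $\cJ^{\rm inv}_{0,\iota}$ enters as an extra (non-polynomial) building block; your version just spells out the bookkeeping of derivative orders and the origin of the bound $\max(k,5)$ explicitly. The only cosmetic point is that the inhomogeneous terms $N^{(k)}_\kappa\prod_j\chi_j^{\kappa_j}(Q^{(\iota)})$ are most cleanly handled by observing that $\chi_j(Q^{(\iota)})=\DD(0,\iota,V(\omega_j))$ already lies in the stated ring via \eqref{prop:step1}, rather than by appealing to rationality of the sampling values $u^{(\iota)}$, which at this stage may be Gaussian rational or complex.
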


   This follows immediately from \cref{prop:FdB,prop:dJinv} and the
   description of Step~1 in \cref{prop:step1}, which
   give explicit expressions for the $\bbQ$-polynomials on the r.h.s.. It
   should be noted that $\cJ^{\rm inv}_{0,\iota}$ is also determined, albeit
   not polynomially, by the basic building blocks $\{\tilde{\DD}_{l,m_l^{\rm
       max}}(c,\iota)\}_{j=1,7,8}$. 
   
   \subsection{Computing $(\cB)_\phi^{(k)}$ exactly.}
\label{sec:compbphi}
There are still quite a few more hurdles yet to overcome in writing down
\eqref{eq:ABN}, as follows.

\ben
\item The main problem is that the whole method of the previous section hinges
  on finding an (arbitrary, but exact) pre-image $Q(u^\iota)$ of $\chi_{j}(Q)=u_j^\iota$
  in \eqref{eq:admQ1}. This requires
  solving a non-linear system of eight polynomial equations of high degree in
  eight variables, which cannot be achieved analytically but for very special values of $u_j^\iota$.
%
%  the special choice \eqref{eq:admQ0} requires exactly the type of
%  inversion of $\chi_{j}(Q)$ described in {\bf M3}, which cannot be achieved analytically;
\item Of course, as in {\bf Method 3}, one could try to solve \eqref{eq:admQ1} for $Q^{(\iota)}=Q(u^{(\iota)})$
  numerically. The resulting linear problems \eqref{eq:ABN} will then have
  numerically approximated entries, and one can then hope to determine $N^{(k)}_\iota\in\bbZ$ by
  rounding up the numerical solution of \eqref{eq:ABN} to the nearest integer
  -- and with appropriate analytic bounds on the propagation of the numerical
  error, this rounding will be exact. But even then, the density and
  ill-conditioning of $\mathfrak{L}^\aleph_{\phi}$ rule out the use
  of floating-point arithmetics in its solution: the absolute value of the minimal principal value of
  $\cA_\phi^\aleph$ can be of order as little as $\approx 10^{-5000}$, meaning
  that tiny rounding errors in the numerical evaluation of $\chi_{\wedge^k
    \mathfrak{e}_8}$ lead to massive errors $\gg \cO(1)$ after solving for 
  $N^{(k)}_{\iota}$ in $\mathfrak{L}_{\phi}^\aleph$, making an exact
  rounding to the nearest integer unfeasible.
  \item Finally, one should be aware that the choice of points $Q^{(\iota)}\in
    \cT_{E_8}$ in \eqref{eq:admQ1} could potentially lead to a singular $\cA_\phi^\aleph$ in
    \eqref{eq:ABN}. This might appear to be an issue of lesser
    importance, as generic choices in \eqref{eq:admQ1} lead to non-degenerate
    linear problems; however, as we will see momentarily, to deal with (1)-(2) above we will have to
    consider a highly constrained choice of sampling values $Q^{(\iota)}$, so this
    point will become relevant as well.
  \een

  The main problem to deal with is that we need to be able to
%  \bit
  %   \item
 a) write and b) solve the linear problems \eqref{eq:ABN} exactly.  It turns
 out that {\bf Methods 2-4} can be used to achieve both of these
 objectives.

 \subsubsection{Exactly rounded floating point arithmetics}

 For the
  latter point, our strategy is to consider sampling values
  $(Q_l^{(\iota)})_l\in\cT_{E_8}$ such that
 \beq
%\bary{rclcc}
\chi_{j}(Q^{(\iota)}) = u_j^{(\iota)}  \in  \varphi_j(\iota)
\l\{
\bary{cc}
\delta_{j,\sigma(\phi,1)}\delta_{j,\sigma(\phi,2)}, &
\iota\in\mathfrak{I}_\phi^{(m_1,m_2)}, \\
\delta_{j,\sigma(\phi,1)}, &
\iota\in\mathfrak{I}_\phi^{(m_1)}, \\
1, &
\iota\in\mathfrak{I}_\phi^{(> d_{\rm max})},
\eary
\r\}
~ \mathfrak{S}
\label{eq:admQ2}
\eeq
where $\mathfrak{S}\subset \l(\bbQ+\ri \bbQ\r)$. Then, since $N_\iota^{(k)}\in\bbZ$, the real part of
$\mathfrak{L}_\phi^\aleph$ becomes a linear system over $\bbQ$ (or $\bbZ$), which can
be solved using exact arithmetics, provided we can find an exact pre-image of
$u_j^{(\iota)}$ in \eqref{eq:admQ2} -- which as explained above we cannot do analytically in general. What we
will do instead will be to find an approximate solution $(\hat Q^{(\iota)},
\delta_{\hat Q^{(\iota)}})$ with $Q^{(\iota)}$ contained in a ball of radius
$\delta_{\hat Q^{(\iota)}}$ from $\hat Q^{(\iota)}$.
%, $Q^{(\iota)} \in B_{\delta_{\hat Q^{(\iota)}}} \hat Q^{(\iota)}$.
We will then compute from that a pair
$\big((\hat \cB_{\phi}^{\aleph})_\iota^{(k)}, \delta_{\hat \cB_{\phi}^{\aleph})_\iota^{(k)}}\big)$ out of which the exact
value of $(\cB_{\phi}^{\aleph})_{\iota}^{(k)} \in\bbQ+\ri \bbQ$ can be uniquely and exactly
determined.

We get started with the following

\begin{defn}
  Let $\mathsf{M}\in\bbN$. We denote
  \beq
  \mathrm{FPC}_{\mathsf{M}}=\l\{x\in\bbC \Big| \mathrm{Mant}_2(\mathfrak{Re}(x)) =
  \sum_{k=1}^M a_k 2^{-k}, \mathrm{Mant}_2(\mathfrak{Im}(x)) =
  \sum_{k=1}^M b_k 2^{-k} \r\},
  \label{eq:FPC}
  \eeq
  the set of complex floating-point numbers with $\mathsf{M}$-bit precision, where
  $a_k, b_k \in \{0,1\}$, and in \eqref{eq:FPC} we denoted
  \beq
  \mathrm{Mant}_2(x)= 2^{-\mathrm{Exp}_2(x)} x, \quad 
  \mathrm{Exp}_2(x) = \lceil \log_2 x \rceil.
  \eeq
  We shall also write
  \beq
  \varepsilon_x := 2^{\mathrm{Exp}_2(x)-\mathsf{M}}.
  \eeq
\end{defn}

\begin{assm}
 Throughout this section and in all expressions in
  Step 1 and 2 
  leading up to the
  calculation of $(\cB_\phi^\aleph)_\iota^{(k)}$ in \cref{cor:Bphi}, we suppose that
  %
  %\beq
  \bit
  \item $(Q^{(\iota)})_l\in\mathrm{FPC}_{\mathsf{M}}$;
  \item any expression of the form  $y=f(Q^{(\iota)})$ for a complex-valued
    function $f:\bbC\to\bbC$ will be shorthand for the composition of
    $\mathrm{rnd}_M \circ f$ where $\mathrm{rnd}_M :\bbC \to \mathrm{FPC}_M$
    is the rounding to the nearest element in $\mathrm{FPC}_M$ in absolute value.
   \eit
\label{assm:fp}
\end{assm}
In other words, we assume to be able to compute a floating-point expression for 
$(\cB_\phi^\aleph)_\iota^{(k)}\in\mathrm{FPC}_{\mathsf{M}}$ in \cref{cor:Bphi} starting with
$(Q^{(\iota)})_l\in\mathrm{FPC}_{\mathsf{M}}$ with {\it exact roundings}
throughout. On a computer implementation of Step 1 and 2, this can be carried
out by working if necessary with internal precision greater than $\mathsf{M}$ in the calculation
of functions $f(x)$ for $x\in\mathrm{FPC}_{\mathsf{M}}$, and then rounding-off-to nearest
when storing the value of $y=f(x)$ as an $\mathsf{M}$-bit complex floating point. We
have used the GNU MPC library \cite{mpc} (building on the GNU GMP/MPFR multi-precision
libraries \cite{gmp,mpfr}) to achieve this.

%\subsubsection{Inverting $u^{(\iota)}=\chi_{j}(Q)$}
\subsubsection{Newton--Raphson inversion}
\label{sec:nr}

Let now $u^{(\iota)}$ be as in \eqref{eq:admQ2} and suppose we want to find an approximate
pre-image $Q^{(\iota)}\in\mathrm{FPC}_\mathsf{M}$ of $u^{(\iota)}=\chi_{j}(Q)$
with $\mathsf{M}$-bit precision. We can do this via the following adaptation of the
Newton--Raphson method.

\begin{algo}
Suppose let $Q_0=\exp(t(h_0))$ with $h_0$ given in
\eqref{eq:u0}, and consider the double sequence of linear systems
\beq
\sum_j \cJ_{ij}(Q^{(\iota,k-1,l/n)}) (Q^{(\iota,k,l/n)}-Q^{(\iota,k-1,l/n)}) =
\chi_{i}(Q^{(\iota,k-1,l/n)})-\frac{l u^{(\iota)}_i}{n}, \quad k,n,l\geq 1.
\label{eq:NR}
\eeq
For all $\iota$ set $Q^{(\iota,0,1)}=Q_0$ , $n=1$, $l=1$, and iterate the following:
\bit
\item assuming $\det \cJ_{ij}(Q^{(\iota,k-1,l/n)})\neq 0$,
solve \eqref{eq:NR} for $Q^{(\iota,k,n)}$ recursively in
  $k$;
\item if there exists $k_{l/n}\in \bbN$ such that
  $|Q^{(\iota,k_{l/n},l/n)}-Q^{(\iota,k_{l/n}-1,l/n)}| < \varepsilon_{Q^{(\iota,k_{l/n},l/n)}}$, we let
  $Q^{(\iota,0,(l+1)/n)}:=Q^{(\iota,k_{l/n},l/n)}$ and repeat the previous step. If there is no such $k_{l/n}$, let $n\to n+1$, $l \to \lfloor (n+1) l/n \rfloor$ and repeat the previous step.
  \eit
  \label{algo:NR}
\end{algo}
  
The adaptive re-scaling of the value of $u^{(\iota)}$ on the r.h.s. of
\eqref{eq:NR} ensures that the process converges to
$Q^{(\iota)}:=Q^{(\iota,k_1,1)}\in \mathrm{FPC}_{\mathsf{M}}$, such that the
$\varepsilon_{Q^{(\iota)}}$-ball $B_{\varepsilon_{Q^{(\iota)}}}(Q^{(\iota)})$ cointains at least one exact pre-image of $u_i^{(\iota)}=\chi_{i}(Q)$.\\

\subsubsection{Bounds and exact roundings}

In the following, for any $x\in\mathrm{FPC}_{\mathsf{M}}$ and
any symbol $\delta_x$, write $\hat \delta_x :=\delta_x+\varepsilon_x$. The first question we would like to ask is what kind of bounds can be put on the
error in the computation of $\tilde\DD_{i,j}(c,\iota)$ under \cref{assm:fp}
given the rounding error $\varepsilon_{Q^{(\iota)}}$ in the calculation of
$Q^{(\iota)}$.   For $c=0$, we could bound this a priori by $ \delta_{\tilde
  \DD_{i,j}(0,\iota)}(\varepsilon_{Q^{(\iota)}})$ defined as
\begin{align}
 \mathrm{sup}_{Q', Q'' \in B_{\varepsilon_{Q^{(\iota)}}}(Q^{(\iota)})}\Big|  \chi_{i}(Q'^j) - 
\chi_{i}(Q''^j) \Big|  \leq  \mathrm{sup}_{Q\in B_{\varepsilon_{Q^{(\iota)}}}(Q^{(\iota)})}\Big|
 \chi_{i}(Q^j)\Big|  - \mathrm{inf}_{Q\in B_{\varepsilon_{Q^{(\iota)}}}(Q^{(\iota)})}
\Big| \chi_{i}(Q^j) \Big| \nn \\   \leq  \sum_{w\in \Gamma(\omega_i)}  \bigg[ \prod_i \l(
\big|Q^{(\iota)}_i\big|^{j w_i} + (-1)^{\mathrm{sign} w_i} 
\epsilon^{j w_i}\r)  - \prod_i\l(
\big|Q^{(\iota)}_i\big|^{j w_i} - (-1)^{\mathrm{sign} w_i} 
\epsilon^{j w_i}\r)\bigg] \nn \\
 =:  \delta_{\tilde
  \DD_{i,j}(0,\iota)}(\varepsilon_{Q^{(\iota)}})-\varepsilon_{\tilde
  \DD_{i,j}(0,\iota)}(\varepsilon_{Q^{(\iota)}})
\label{eq:boundin}
\end{align}
where the second line is obtained by repeated use of the triangle
inequality, and the last line defines $ \delta_{\tilde \DD_{i,j}(0,\iota)}$ as
the sum of the theoretical error bound on the l.h.s. and a rounding error
term due to the floating-point round-off. With $c\neq 0$, we can use the fact that the real and imaginary
parts of $\chi_{i}(Q)$ are harmonic in $Q$ to obtain a uniform bound for the
the order-$|c|$ derivatives. Recall that if $f\in
C^\infty(\Omega\subset\bbR^n)$ is harmonic in $\Omega$, and $\Omega'\subsetneq
\Omega$ is compact, we have (see for example \cite[Thm~2.10]{MR0473443})
\beq
\mathrm{sup}_{\Omega'}|D^c f| \leq \l(\frac{n |c|}{\mathrm{dist}(\Omega',\de\Omega)}\r)^{|c|}\mathrm{sup}_\Omega |f|.
\label{eq:harm}
\eeq
Using \eqref{eq:harm} with $\Omega'=B_{\varepsilon_{Q^{(\iota)}}}(Q^{(\iota)})$,
$\Omega=B_{2{\varepsilon_{Q^{(\iota)}}}}(Q^{(\iota)})$, we get
\bea
\mathrm{sup}_{Q', Q'' \in B_{\varepsilon_{Q^{(\iota)}}}(Q^{(\iota)})}\Big|  D^c \chi_{i}(Q'^j) - 
D^c \chi_{i}(Q''^j) \Big|
& \leq & \l( 64 |c|\r)^{|c|} \delta_{\tilde
  \DD_{i,j}(0,\iota)}(2{\varepsilon_{Q^{(\iota)}}}) \nn \\ & =: & \delta_{\tilde \DD_{i,j}(c,\iota)}({\varepsilon_{Q^{(\iota)}}})-\varepsilon_{\tilde \DD_{i,j}(c,\iota)}({\varepsilon_{Q^{(\iota)}}})
\eea
where once again we included in the definition of the error bound
$\delta_{\tilde \DD_{i,j}(c,\iota)}$ a contribution accounting for the rounding
in $\mathrm{FPC}_{\mathsf{M}}$. From this we recursively get a bound on the error terms in an exactly-rounded FPC evaluation of the antisymmetric
characters of $V(\omega_j)$,
\bea
\delta_{\DD(c,\iota, \wedge^{k-m} V(\omega_j))}-\varepsilon_{\DD(c,\iota, \wedge^{k-m} V(\omega_j))} &:=&
    \sum_{m=1}^k 
  \sum_{c'+c''=c}  \l(\prod_l \binom{c_l}{c'_l}\r)
  \Big(\big|\tilde{\DD}_{m,j}(c',\iota)\big| \delta_{\DD_{m}(c'',\iota,
    \wedge^{k-m} 
    V(\omega_j))} \nn \\ &+& |\DD(c'',\iota, \wedge^{k-m} V(\omega_j))|
  \delta_{\tilde{\DD}_{m,j}(c',\iota)} + 2^{-\mathsf{M}}\Big)
\nn \\ & \geq & \mathrm{sup}_{Q', Q'' \in B_{\varepsilon_{Q^{(\iota)}}}(Q^{(\iota)})}\Big|
D^c\chi_{\wedge^k\mathfrak{e}_8} (Q') - 
  D^c \chi_{\wedge^k\mathfrak{e}_8}(Q'') \Big| 
\eea
where we assumed (and can verify a posteriori) that $\delta_{\tilde{\DD}_{m,j}(c',\iota)}\delta_{\DD(c'',\iota,
  \wedge^{k-m} V(\omega_j))} \leq 2^{-\mathsf{M}}$.

Now, by \cref{prop:FdB,prop:dJinv} and \eqref{prop:step1}, we have that
\beq
\mathfrak{D}^c \chi_{\wedge^k \mathfrak{e}_8}  =
\LL_{c,\iota}((D^d\chi_{\wedge^k \mathfrak{e}_8})_{|d|\leq|c|})
\label{eq:LL}
\eeq
for a linear functional $\LL_{c,\iota}$ independent on $k$: this linear operator
encodes the change-of-variable when writing the differential operator $\mathfrak{D}_c=\de^{|c|}_{\chi_{1}^{c_1}\dots
\chi_{8}^{c_8}}$ in exponentiated linear co-ordinates $Q$ on
$\cT_{E_8}$ (\cref{prop:FdB}), which gives a linear combination of $D^d$ at $Q^{(\iota)}$ with
$|d|\leq |c|$. So to figure out how the $\epsilon$-error in $Q^{(\iota)}$
propagates to the l.h.s. of \eqref{eq:LL}, we need to estimate a bound for the 2-norm of
$\LL_{c,\iota}$,
\bea
\delta_{\mathfrak{D}^c \chi_{\wedge^k \mathfrak{e}_8}} -\varepsilon_{\mathfrak{D}^c \chi_{\wedge^k \mathfrak{e}_8}} &:=& \|\LL_{c,\iota}\|_2
\| (\delta_{D^d\chi_{\wedge^k \mathfrak{e}_8}})_{|d|\leq |c|}\|_2
  \nn \\
  & \geq & \mathrm{sup}_{Q', Q'' \in B_{\varepsilon_{Q^{(\iota)}}}(Q^{(\iota)})} \l|
  \LL_{c,\iota} \l( D^d\chi_{\wedge^k \mathfrak{e}_8}(Q') - D^d\chi_{\wedge^k
    \mathfrak{e}_8}(Q'')\r) \r|.
  \label{eq:errchiwed}
\eea
This can be done as follows: from \cref{prop:dJinv}, we have
\beq
\|\cJ^{\rm inv}_{c,\iota}\|_{2} \leq \sum_{r=1}^{|c|} \sum_{d^{(1)}+\dots
  d^{(r)} = c} \| \cJ^{\rm
  inv}_{0,\iota}\|_2 \prod_{l=1}^r
\|\cJ_{d^{(r)},\iota}\|_2 ~\|\cJ^{\rm inv}_{0,\iota}\|_2
\label{eq:Jinvnorm}
\eeq
and we can place upper limits for the summands by bounding each operator
2-norm in $\mathrm{Mat}(8,\bbC)$ with the Frobenius norm, $\| \|_2 \to \| \|_F$, on which upper bounds can be
straightforwardly found in terms of $u^{(\iota)}_i$ by using \eqref{eq:harm}:
\bea
\|\cJ^{\rm inv}_{c,\iota}\|_{2} & \leq & \sum_{r=1}^{|c|} \sum_{d^{(1)}+\dots
  d^{(r)} = c} \| \cJ^{\rm
  inv}_{0,\iota}\|_F \prod_{l=1}^r
\|\cJ_{d^{(l)},\iota}\|_F ~\|\cJ^{\rm inv}_{0,\iota}\|_F,
\label{eq:Jinvnorm2}
\\
\|\cJ_{d,\iota}\|_F^2 & \leq & 8 {16 |d+1|}^{|d|+1} \l[ \sum_j \l(|u_j^{(\iota)}|+
  \delta_{\DD(0,\iota,V(\omega_j))}(2{\varepsilon_{Q^{(\iota)}}}) \r)^2 \r]\\
\| \cJ^{\rm inv}_{0,\iota}\|_F^2 & \leq & 128 \l[ \sum_j \l(|Q_j^{(\iota)}|+
  2{\varepsilon_{Q^{(\iota)}}} \r)^2 \r].
\label{eq:Jinvnorm3}
\eea
Suppose that $\| \cJ^{\rm inv}_{0,\iota}\|_F >1$. Then by \cref{prop:FdB} and
\eqref{eq:Jinvnorm2}--\eqref{eq:Jinvnorm3}, the largest contribution to $\|\LL_{c,\iota}\|_2$ comes
from the Fa\`a di Bruno graph with a single horizontal arrow emanating from
the top-left vertex, and all vertical arrows ending on it. Therefore,
\bea
\|\LL_{c,\iota}\|_2 & \leq & |c|! \sum_{r=1}^{|c|} 2^{\frac{17}{2} r}   \l[ \sum_j \l(|Q_j^{(\iota)}|+
  2{\varepsilon_{Q^{(\iota)}}} \r)^2 \r]^{r/2} \l[ \sum_j \l(|u_j^{(\iota)}|+
  \delta_{\DD(0,\iota,V(\omega_j))}(2{\varepsilon_{Q^{(\iota)}}}) \r)^2\r]^{r/2}\nn \\ & \times
& \sum_{d^{(1)}+\dots
  d^{(r)} = c}  \prod_{l=1}^r {16 \l|d^{(l)}+1\r|}^{(|d^{(l)}|+1)/2},
\label{eq:normLL}
\eea
placing accordingly a bound on $\delta_{\mathfrak{D}^c \chi_{\wedge^k
    \mathfrak{e}_8}}$ in \eqref{eq:errchiwed}.

The main question now is whether it is possible to choose judiciously the
sampling set $\mathfrak{S}\subset \bbQ+\ri \bbQ$ in \eqref{eq:admQ2} for
$\{u^{(\iota)}\}_{\iota\in\mathfrak{I}}$ such that we can employ the bounds
\eqref{eq:errchiwed}--\eqref{eq:normLL} to control the size of the error
$\delta_{\mathfrak{D}^c \chi_{\wedge^k
    \mathfrak{e}_8}}$, and hence rigorously round $\chi_{\wedge^k
  \mathfrak{e}_8}(Q^{(\iota)})$ to its exact value as a complex number
with rational real and imaginary parts. One na\"ive possibility is to choose
$\mathfrak{S}\subset \bbZ$, which would lead to $\chi_{\wedge^k
  \mathfrak{e}_8}(Q^{(\iota)})\in\bbZ $; however one immediately faces the
problem that that
even for small integer values of $u_i^{(\iota)}$, the values of the power sums
$\tilde \DD_{k,j}(c,\iota)$ may be as large as $\approx 10^{40}$ for large
$k$. This implies that the loss of significant digits coming from 
the round-off errors $\varepsilon_{\tilde \DD_{k,j}(c,\iota)}$ will be
correspondingly large, and lead eventually to a very poorly controlled bound on $\delta_{\mathfrak{D}^c \chi_{\wedge^k
    \mathfrak{e}_8}}$. The above indicates that to compute $(\cB_\phi^\aleph)$
exactly we need to 
\bit
\item pick values in $\mathfrak{S}$ that are sufficiently close to zero, so
  that the growth of the values of $\DD_{k,j}(c,\iota)$ and $\DD(c,\iota, \wedge^k \omega_j)$ is under control;
\item have sufficiently many sample points, with $|\mathfrak{S}|$ large enough
  to ensure that $\cA_\phi^\aleph$ is always non-singular\footnote{An
    immediate lower bound here is $|\iota^{\rm max}|=31$, corresponding to
    $\phi_j=\delta_{j7}$: this is the system computing coefficients of
    monomials of the form $\chi_{7}^k$, $k=1,\dots, |\iota^{\rm max}|=31$.}; 
\item find an a priori reliable prescription to round the final values for $\mathfrak{D}^c \chi_{\wedge^k
    \mathfrak{e}_8}$ to a number in $\bbQ+\ri \bbQ$.
\eit
A particularly convenient choice is to take
\beq
\mathfrak{S}:=\l\{ \frac{l}{2}+\ri\frac{m}{2},l\in\{-3,\dots,3\};
m\in\{-2,\dots,2\}\r\},
\label{eq:smpl}
\eeq
so that $|\mathfrak{S}|=40$. Then for $u_j^{(\iota)} \in \mathfrak{S}$, we would
expect to have
\beq
\mathfrak{D}^c \chi_{\wedge^k \mathfrak{e}_8 }(Q^{(\iota)})\in 2^{-|\iota^{\rm max}|} (\bbZ+\ri \bbZ)
\eeq
from \eqref{eq:e8num} and \eqref{eq:iotamax}. The bounds computed in the
previous section can be combined together to prove the following

\begin{thm}
For any $(\phi,\aleph)$, there exists a choice of values $\{u_j^{(\iota)}\}_{\iota\in\mathfrak{I}}$ in
\eqref{eq:admQ2}, with
\beq
\mathfrak{S}=\l\{\bary{cc} \{-e_k,\dots,e_k-1,e_k\}, & \phi_k=|\phi|=1, \\ \l\{
\frac{l}{2}+\ri\frac{m}{2}\r\}_{\stackrel{l=-3\dots 3}{m=-2,\dots, 2}}, & |\phi|>1 \eary \r.
\eeq
and of approximate pre-images
$\{Q_j^{(\iota)}\in\mathrm{FPC}_{\mathsf{M}}\}_{\iota\in\mathfrak{I}}$ satisfying the following:
\bit
\item in \eqref{eq:Aphi}, $\det\mathfrak{Re}\cA_\phi^{\aleph} \neq 0$ for all
  $(\phi, \aleph)$;
\item $0\neq |\det \cJ(Q^\iota)|\in \mathrm{FPC}_\mathsf{M}$, $\|\cJ^{\rm inv}\|_F$ in \eqref{eq:jac};
\item for $\mathsf{M}\geq \mathsf{M}_0:= 494$, we have that
  \beq
  2^{|\iota^{\rm max}|} \delta_{\mathfrak{D}^c \chi_{\wedge^k \mathfrak{e}_8
  }}< \frac{1}{2}.
  \eeq
\eit 
\label{thm:round}
\end{thm}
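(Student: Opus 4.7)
The plan is to combine the analytic error estimates of the preceding subsection with the sampling scheme of \cref{algo:NR}, and then extract the explicit precision threshold $\mathsf{M}_0=494$ from a quantitative version of the bound \eqref{eq:normLL}.

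\textbf{Step 1: non-degeneracy of $\mathfrak{Re}\,\cA_\phi^\aleph$.} For the case $|\phi|=1$ the matrix $\cA_\phi^\aleph$ is, up to a permutation of rows, a Vandermonde matrix in the single complex variable $u_{k}^{(\iota)}$ with nodes in $\{-e_k,\dots,e_k\}$. The cardinality of this node set is $2e_k+1\geq|\iota^{\rm max}|=31$ in the worst case $k=8$, so the corresponding integer Vandermonde is invertible over $\bbQ$. For $|\phi|>1$ and the sampling set $\mathfrak{S}=\{l/2+\ri m/2\}$, I would argue that the map $\iota\mapsto\prod_j(u_j^{(\iota)})^{\kappa_j}$ is, after separating real and imaginary parts, a generalised Vandermonde evaluation on the $|\phi|$-fold product lattice $\mathfrak{S}^{|\phi|}$; non-vanishing of $\det\mathfrak{Re}\,\cA_\phi^\aleph$ follows because the node set contains strictly more than $|\mathfrak{I}_\phi^\aleph|$ distinct lattice points of sufficiently small modulus, so a generic sub-selection yields an invertible submatrix. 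In practice, since $|\mathfrak{I}_\phi^\aleph|\leq 3027\ll 40^{|\phi|}$ for $|\phi|\geq 2$, one has a large amount of freedom, and I would fix a concrete injective enumeration $\mathfrak{I}_\phi^\aleph\hookrightarrow \mathfrak{S}^{|\phi|}$ and verify invertibility \emph{a posteriori} at the level of the computer-algebra calculation.

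\textbf{Step 2: existence and numerical computation of the approximate pre-images.} Starting from the base point $Q_0=\exp(t(h_0))$ of \eqref{eq:u0}, which has all fundamental characters equal to zero, \cref{algo:NR} produces $Q^{(\iota)}\in\mathrm{FPC}_\mathsf{M}$ lying in an $\varepsilon_{Q^{(\iota)}}$-ball around a genuine pre-image. The quadratic convergence of Newton--Raphson, together with the adaptive fraction $l/n$ introduced in \eqref{eq:NR}, ensures termination so long as the Jacobian $\cJ$ stays non-singular along the path. I would verify (numerically along the iterates, and then use continuity) that at the produced $Q^{(\iota)}$ the rounded quantity $|\det\cJ(Q^{(\iota)})|\in \mathrm{FPC}_\mathsf{M}$ is strictly positive, which is the second bullet of the statement. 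Since the chosen $u_j^{(\iota)}$ lie at most at distance $\sqrt{13}/2$ from $0$ where $\cJ$ is invertible, the set of singular pre-images is a proper Zariski-closed subset of $\cT_{E_8}$ and is avoided generically; a small random perturbation within $\mathfrak{S}$ suffices to step around any accidental singularity.

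\textbf{Step 3: the precision bound $\mathsf{M}_0=494$.} This is the quantitative core of the theorem and is where the main work lies. Starting from the bound \eqref{eq:boundin} on $\delta_{\tilde\DD_{i,j}(0,\iota)}$, the harmonic estimate \eqref{eq:harm} propagates to all orders $|c|\leq d_{\rm max}$, yielding explicit polynomial bounds in $|u_j^{(\iota)}|$ and $\varepsilon_{Q^{(\iota)}}$. The recursion \eqref{eq:derQwedk}, iterated up to $k=120$, controls $\delta_{\DD(c,\iota,\wedge^k V(\omega_7))}$ by a polynomial of degree $120$ with combinatorial coefficients in $\binom{c_l}{c'_l}$; combined with \eqref{eq:Jinvnorm2}--\eqref{eq:Jinvnorm3} and the Fa\`a di Bruno expansion of \cref{prop:FdB}, this yields \eqref{eq:normLL} and hence an a priori bound
\[
\delta_{\mathfrak{D}^c\chi_{\wedge^k\mathfrak{e}_8}}\leq F\bigl(|\iota^{\rm max}|,\,|c|,\,k,\,\|\cJ^{\rm inv}_{0,\iota}\|_F,\,\max_j|u_j^{(\iota)}|,\,\varepsilon_{Q^{(\iota)}}\bigr)
\]
with $F$ an explicit expression. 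Plugging in $|\iota^{\rm max}|=31$, $k\leq 120$, $|c|\leq d_{\rm max}=5$, the constraint $|u_j^{(\iota)}|\leq \sqrt{13}/2$ enforced by \eqref{eq:smpl} (together with $|u_j^{(\iota)}|\leq e_k\leq 8$ in the one-dimensional case), and a uniform bound on $\|\cJ^{\rm inv}_{0,\iota}\|_F$ obtained from Step~2, one is left with an inequality in the single unknown $\varepsilon_{Q^{(\iota)}}=2^{\mathrm{Exp}_2(Q^{(\iota)})-\mathsf{M}}$. Solving it for the smallest integer $\mathsf{M}$ ensuring $2^{|\iota^{\rm max}|}\delta_{\mathfrak{D}^c\chi_{\wedge^k\mathfrak{e}_8}}<1/2$ gives the claimed value $\mathsf{M}_0=494$.

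The main obstacle is Step~3: the a priori bound \eqref{eq:normLL} involves a sum over Fa\`a di Bruno graphs whose combinatorial weight grows factorially in $|c|$, multiplied by the recursive tower in \eqref{eq:derQwedk} of length $k=120$. Taming this to a sharp enough estimate at $\mathsf{M}=494$ requires identifying the dominant Fa\`a di Bruno graph (the one singled out after \eqref{eq:Jinvnorm3}, with one horizontal arrow at the top vertex and all vertical arrows feeding into it) and showing that the remaining graphs contribute subdominantly, uniformly in $\iota$. Steps 1 and 2 are essentially existence statements, verifiable computationally on a case-by-case basis; the real analytic content of \cref{thm:round} is the closed-form control of the error propagation encapsulated in \eqref{eq:normLL}.
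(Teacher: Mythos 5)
Your proposal follows essentially the same route as the paper: exhibit a concrete sampling set and pre-images produced by \cref{algo:NR}, verify the non-degeneracy conditions (bullets 1 and 2) computationally for that explicit choice, and obtain the precision threshold by evaluating the analytic error bounds \eqref{eq:boundin}--\eqref{eq:normLL} numerically at the found $Q^{(\iota)}$. The only divergence is cosmetic: you frame Step~3 as solving a closed-form inequality $F(\cdot)<1/2$ for $\mathsf{M}$ and sketch genericity heuristics for $\det\mathfrak{Re}\,\cA_\phi^\aleph\neq 0$, whereas the paper (and, as you yourself concede, your own argument in the end) simply certifies all three bullets \emph{a posteriori} on the specific computed data stored in the ancillary files.
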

\begin{proof}
We can prove the first two points by exhibiting a choice of
  values $\{(u,Q)^{(\iota)}\}_{\iota\in\mathfrak{I}}$, where $Q^{(\iota)}$ is
  found with \cref{algo:NR}; we claim that one such can be found, and its explicit form for all $\iota$ is available upon request. The third point follows from
  evaluating the analytic bounds \eqref{eq:boundin}--\eqref{eq:normLL} for the
    set of pre-images $Q^{(\iota)}$ thus found; we refer the reader to the
    ancillary files described in \cref{sec:auxfiles} for the calculation of
    all the relevant quantities.
\end{proof}

Let us pause to rephrase the content of \cref{thm:round} once more. The theorem states that there exists a choice of $u^{(\iota)}$ compatible with
\eqref{eq:admQ2} and \eqref{eq:smpl} such that $\mathfrak{Re}\cA_\phi^\aleph$ is
a non-singular linear system over $\bbZ$ for all $(\phi, \aleph)$; and
furthermore, there exists a sufficiently large integer $\mathsf{M}_0$ such
that, 
denoting  $Q^{(\iota)}\in\bbC$ and $\widehat Q^{(\iota)}\in\mathrm{FPC}_\mathsf{M_0}$, respectively,
a pre-image of $u^{(\iota)}$ in \eqref{eq:admQ2} and an $\mathsf{M}$-bit
precision approximation thereof, the value of $\mathfrak{D}^c \chi_{\wedge^k
  \mathfrak{e}_8 }(\widehat Q^{(\iota)})\in\mathrm{FPC}_{\mathsf{M_0}}$ computed under
\cref{assm:fp} with $\mathsf{M}\geq \mathsf{M}_0$ must satisfy
\beq
2^{|\iota^{\rm max}|} \mathfrak{D}^c \chi_{\wedge^k \mathfrak{e}_8
}(Q^{(\iota)}) \in B_{\frac{1}{2}}(\mathfrak{D}^c \chi_{\wedge^k \mathfrak{e}_8
}(\widehat Q^{(\iota)})),
\eeq
by the analytic bounds \eqref{eq:boundin}--\eqref{eq:normLL}. Now,
the l.h.s. is a point in the two-dimensional lattice $\bbZ^2
\simeq \bbZ+\ri \bbZ$; and the Theorem constrains it to lie in a disk of
radius $<1/2$ centred at $\mathfrak{D}^c \chi_{\wedge^k \mathfrak{e}_8}
(\widehat Q^{(\iota)})$: so it's the {\it unique} (if at all existent) integer lying in
that disk. Therefore, the exact value of $\mathfrak{D}^c \chi_{\wedge^k \mathfrak{e}_8
}(Q^{(\iota)})$ is determined by $\mathfrak{D}^c \chi_{\wedge^k \mathfrak{e}_8}
(\widehat Q^{(\iota)})$
 computed from Step 1 and 2 under \cref{assm:fp}. This concludes the
calculation of $\mathfrak{D}^c \chi_{\wedge^k
  \mathfrak{e}_8}|_{\chi_{j}=u_j^{(\iota)}}$, and thus
$(\cB_\phi^\aleph)_\iota^{(k)}$, from \eqref{eq:admQ2}.

 \subsection{Solving exactly for $N_\iota^{(k)}$.}
\label{sec:solvext}
 
 \cref{thm:round} allows us to write the exact form of
 $\mathfrak{L}_\phi^\aleph$ subordinate to a choice of sampling set
 \eqref{eq:admQ2} and \eqref{eq:smpl}; all is left to do to provide the
 solution of \cref{prob:e8num} is to solve explicitly for $N_\iota^{(k)}$
the linear problem $\mathfrak{L}_\phi^\aleph$ for all $\phi$ and $\aleph$. By
 \cref{thm:round}, and since $N_\iota^{(k)}\in \bbZ$, we just need to consider
 the real part of $\mathfrak{L}_\phi^\aleph$, which is a linear system over
 $\bbQ$. This can be solved exactly using
%$\det\mathfrak{Re}(\cA_{\phi}^\aleph)\neq 0$ we can now use
 {\bf Method 4}
 %, since now the resulting system can be 
 %solved exactly and efficiently
by Dixon's $p$-adic lifting \cite{MR681819}: we fix
a prime $p$, invert $\mathfrak{Re} \cA_\phi^\aleph$ mod $p$ by LU
decomposition, and solve for $N_\iota^{(k)}$ mod $p^Q$ (for $Q$ large
enough). This allows to reconstruct $N_\iota^{(k)}$ from its $p-$adic
expansion in $\cO(r^3\log^2 r)$ time, where
$r=|\mathfrak{I}_\phi^\aleph|$. For this computation, the \texttt{fmpz\_mat} module
of the FLINT (Fast Library for Number Theory) C~library has been
systematically employed; see \cite{flint} and references therein for details.

\subsection{Implementation: runtime estimates and distributed computation}
\label{sec:impl}

An estimate of the runtime required can be illustrated by the
following two tables. For the first six rows, the operations in \cref{tab:runt} are in
$\mathsf{M}+1=495$-bit precision with correct rounding-to-nearest, whereas in
the last row multi-precision rational arithmetic is used. The first five rows are for a {\it fixed}
$\iota\in\mathfrak{I}_{\phi}^\aleph$ and {\it all}
$c\in\mathscr{C}_{\gamma_\aleph}$; the last two indicate estimates for
all $\iota\in\mathfrak{I}_{\phi}^\aleph$ for given $(\phi,\aleph)$.

\begin{table}[!h]
  \begin{tabular}{|c|c|}
    \hline
    Operation  & Walltime estimate \\
        \hline
        %Newton--Raphson inversion,
        $Q=Q(u^{(\iota)})$ (\cref{algo:NR}) & $\approx 10$ sec \\
        \hline
        %        \multirow{5}{*}{
        $\DD(c,\iota, \wedge^k \mathfrak{e}_8)$,
          \eqref{eq:derQpowk}-\eqref{eq:derQwedk} %}
&        $\approx \binom{|c|+8}{8}~5~\mathrm{sec}$ \\
%        \\ \cline{2-3}
%        & 2  & $~45$ sec \\ \cline{2-3}
%        & 3  & $~2$ min \\ \cline{2-3}
%        & 4  & $~5$ min \\ \cline{2-3}
%        & 5  & $~14$ min \\
        \hline
        %\multirow{5}{*}{
        $\DD(c,\iota, V(\omega_i)$, \eqref{eq:derQpowk}--\eqref{eq:Vom2}%}
&        $\approx \binom{|c|+8}{8}~1~\mathrm{sec}$ \\
          %-\eqref{eq:derQwedk},
          %\eqref{eq:Vom6}-
%        & 1  & $~1$ sec \\ \cline{2-3}
%        & 2  & $~15$ sec \\ \cline{2-3}
%        & 3  & $~50$ sec \\ \cline{2-3}
%        & 4  & $~2$ min \\ \cline{2-3}
%        & 5  & $~8$ min \\
        \hline
        % \multirow{5}{*}{
        $\cJ^{\rm inv}_{c,\iota}$ \eqref{eq:Jinviota}%}
        &        $\approx \binom{|c|+8}{8}~.1~\mathrm{sec}$ \\
%        & 1  & $~.1$ sec \\ \cline{2-3}
%        & 2  & $~.6$ sec \\ \cline{2-3}
%        & 3  & $~3$ sec \\ \cline{2-3}
%        & 4  & $~12$ sec \\ \cline{2-3}
%        & 5  & $~45$ sec \\
        \hline
        %\multirow{5}{*}{
        $\mathfrak{D}_c$ (\cref{def:FdB}, \eqref{eq:FdB})%}
        &        $\approx \binom{|c|+8}{8}~.1~\mathrm{sec}$ \\
%        & 1  & $~.1$ sec \\ \cline{2-3}
%        & 2  & $~.5$ sec \\ \cline{2-3}
%        & 3  & $~1$ sec \\ \cline{2-3}
%        & 4  & $~10$ sec \\ \cline{2-3}
%        & 5  & $~1$ min \\
%\end{tabular}
%\label{tab:runt1}                                
%
%\end{table}
%
%\begin{table}[!h]
%  \begin{tabular}{|c|c|}
%    \hline
%    Operation & Walltime estimate \\
        \hline
        $\{\mathfrak{D}_c \chi_{\wedge^k
          \mathfrak{e}_8}(u^{(\iota)})\}_{\iota\in\mathfrak{I}_\phi^\aleph}$ &
%        -- &
        $\approx \binom{|c|+8}{8} |\mathfrak{I}_\phi^\aleph| \times 5\times 10^{-2}
        \mathrm{sec}$ \\
        \hline
        $N_\iota^{(k)}$ &
        %-- & 
        $\approx |\mathfrak{I}_\phi^\aleph|^3 \log^2 |\mathfrak{I}_\phi^\aleph|
        \times 10^{-5} \mathrm{sec}$\\
        \hline
\end{tabular}
  \caption{Wall-clock estimate of the sequence of operations in \cref{sec:complphi,sec:compbphi}.} 
  \label{tab:runt}
\end{table}

Because $|\mathfrak{I}|\approx 10^6$, and $\binom{d_{\rm max}+8}{8}\approx 10^3$, the total runtime would be in the order of a few dozen years, but there are a
number of ways to reduce it considerably. Define
\beq
(\zeta_{\aleph,\phi})_i= \left\{\bary{cc} \delta_{i\sigma(\phi,1)}+\delta_{i\sigma(\phi,2)}, &  \aleph=(m_1,m_2),\\
  \delta_{i\sigma(\phi,1)} , & \aleph=m_1, \\
  0, & \aleph>d_{\rm max}. \eary \right.
\eeq
and consider monomial subsets $\mathfrak{I}_\phi^\aleph$,
$\mathfrak{I}_{\phi'}^{\aleph '}$ such that
\beq
\phi_i +(\zeta_{\aleph,\phi})_i = \phi'_i +(\zeta_{\aleph',\phi'})_i \quad
\forall i.
\eeq
Then the condition \eqref{eq:admQ2} is the same for both subsets: in
particular, if $|\mathfrak{I}_\phi^\aleph|\leq
|\mathfrak{I}_{\phi'}^{\aleph'}|$, admissible sampling values $u^{(\iota)}$
for $\iota\in\mathfrak{I}_\phi^\aleph$ are also admissible as sampling values
for $\mathfrak{I}_{\phi'}^{\aleph'}$, since \eqref{eq:admQ2} is the same: the
calculation of the first five rows can then be performed only once. The
repetition allows to drastically reduce the number of values $Q=Q(u^{(\iota)})$ on which
the first five operations of \cref{tab:runt} have to be performed by a factor
of about five, down to $\approx 2\times 10^5$ points, and a total runtime for
these operation of $\approx 7.5 \mathrm{yrs}$.

Furthermore, computations at different sampling values are entirely independent of each
other, which means that they can be effectively parallelised purely by
segmentation of the sampling set. Since these processes do not interact with
each other, distributing the first batch of five calculations in
\cref{tab:runt} to $\mathsf{N}$ CPU cores gives a theoretical net factor of
$\mathsf{N}$ in the reduction of the absolute walltime for the
computation: with $\mathsf{N}\approx 100$, corresponding to two
4-CPU machines with 12 processor cores each, this is reduced down to
$\approx 4~\mathrm{weeks}$. Similar considerations apply for the last two sets of
computations in \cref{tab:runt}: $\mathfrak{L}_\phi^\aleph$ and
$\mathfrak{L}_{\phi'}^{\aleph'}$ can be computed and solved in parallel whenever
$\mathfrak{I}_\phi^\aleph$ and $\mathfrak{I}_{\phi '}^{\aleph '}$ are
incomparable under the weak order $<$ of \eqref{eq:preord1}, for an extra
$\approx 1.5~\mathrm{weeks}$ worth of elapsed real time for the computation of
$N_\iota^{(k)}$.

\subsection{Results and ancillary files} The calculations described in the
previous section were carried out at the Maths Compute Cluster at Imperial
College London, and to a smaller extent, at the {\tt{Omega}} and
{\tt{Muse}} compute clusters of the Universit\'e de Montpellier. The results of the computation, including the coefficients $N_\iota^{(k)}$  as well as the
auxiliary quantities $Q^{(\iota)}$, $u^{(\iota)}$ and $\delta_\iota^{\rm max}:=\mathrm{sup}_{k,c} 2^{|\iota^{\rm max}|} \delta_{\mathfrak{D}^c \chi_{\wedge^k \mathfrak{e}_8
  }}$ relevant for \cref{thm:round} are stored in binary
files available at 
{\begin{center} \texttt{http://tiny.cc/E8Char},\end{center}}

which also comprises of a {\it
  Mathematica} notebook to access them. Their structure is
described in \cref{sec:auxfiles}.
the
polynomial character decomposition $\mathfrak{p}_k$ of $\chi_{\wedge^k
  \mathfrak{e}_8}$ is reproduced in \cref{sec:bigtable} up to $k=11$.

\section{Applications}
\label{sec:appl}

A big chunk of the ``Applications'' section of this paper is really \cite{Brini:2017gfi}; the
polynomial character decompositions found here were used there to construct
the spectral curves of the $\widehat{E_8}$-relativistic Toda chain
(\cref{sec:is}) and  the Seiberg--Witten curves of $E_8$ minimallly
supersymmetric Yang--Mills theory on $\bbR^{1,3}\times S^1$ (\cref{sec:sw}),
as well as to prove an all-genus version of the Gopakumar--Ooguri--Vafa
correspondence (\cref{sec:cs}) for the Poincar\'e sphere, and to provide
a mirror theorem for the Fano orbicurve of type $E_8$
(\cref{sec:polfrob}). We provide here some examples and non-examples of
characteristic polynomials having Galois
group the Weyl group of $E_8$ (\cref{sec:galois}).

\subsection{Some cyclotomic cases}

Let us first start with some non-examples, where the splitting field of the
characteristic polynomial \eqref{eq:cpol} is given by a finite sequence of
cyclotomic extensions of the rationals.

\begin{example}
The obvious one -- a sanity check
really -- is given by specialising
\beq
\chi_{i}(g)=\dim V(\omega_i)
\label{eq:dimvi}
\eeq
corresponding to $g=1$. Then obviously we must have
\beq
\chi_{\wedge^k \mathfrak{e}_8}(g) = \binom{248}{k}
\eeq
which is (non-trivially) verified by substituting \eqref{eq:dimvi} into
\eqref{eq:e8num}, leading to to
\beq
\mathfrak{Q}_{240}(\mu)= (\mu-1)^{240}.
\eeq
\end{example}
\begin{example}
  \label{ex:chi0}
A more interesting case is
\beq
\chi_{i}(g)=0.
\label{eq:chi0}
\eeq
Our solution of \cref{prob:e8num} then leads to the following result for the
antisymmetric adjoint traces,
\beq
\chi_{\wedge^k \mathfrak{e}_8}(g) = \l\{\bary{cc} 0 & 31 \not | ~k,
\\ \binom{8}{k/31} & \mathrm{else.}  \eary \r.
\eeq
Therefore,
\beq
\mathfrak{Q}_{240}(\mu) = \l(\Phi_{31}(\mu) \r)^8=\l(\sum_{i=0}^{30} \mu^i \r)^8,
\eeq
where $\Phi_{n}$ denotes the $n^{\rm th}$ cyclotomic polynomial,
\beq
\Phi_n(\mu)=\prod_{\stackrel{1\leq k\leq
    n}{\mathrm{gcd}(k,n)=1}}\l(x-\exp\frac{2\pi\ri k}{n} \r),
\eeq
so in this case the splitting field of $\mathfrak{Q}_{240}$ is
$\bbQ(\exp(2\pi \ri/31))$, and the Galois group is $\bbZ/(31\bbZ)$.
\end{example}

\begin{example}
A third
notable example, already considered in \cite{Brini:2017gfi}, is
\beq
(\chi_{i})_i(g)= (1, 3, 0, 3, -3, 3, -2, -2).
\eeq
Geometrically, this corresponds to the super-singular limit of the Toda
spectral curves computed in \cite{Brini:2017gfi}; equivalently, the
superconformal point of the geometrically engineered $E_8$ Yang--Mills theory
on $\bbR^{1,4}$; the zero 't Hooft coupling point of Chern--Simons theory on
the Poincar\'e sphere; and the conifold point for the orbifold Gromov--Witten
theory of $[\cO(-1)^{\oplus 2}_{\bbP^1}/\mathbb{I}_{120}]$. We obtain in this case:
\bea
\mathfrak{Q}_{240} &=& \Phi_2^2(\mu) \Phi_6^3(\mu) \Phi_3^5(\mu) \Phi_5^5(\mu) \Phi_{18}^4(\mu)
\Phi_9^3(\mu)  \Phi_{15}^2(\mu) \Phi_{30}^3(\mu) \Phi_{45}^2(\mu)\Phi_{90}^3(\mu)\nn \\ &=
& (\mu +1)^2 \left(\mu ^2-\mu +1\right)^3 \left(\mu ^2+\mu +1\right)^5
\left(\mu ^4+\mu ^3+\mu ^2+\mu +1\right)^5 \left(\mu ^6-\mu ^3+1\right)^4
\nn \\ & & \left(\mu ^6+\mu ^3+1\right)^3 \left(\mu ^8-\mu ^7+\mu
   ^5-\mu ^4+\mu ^3-\mu +1\right)^2 \left(\mu ^8+\mu ^7-\mu ^5-\mu ^4-\mu
^3+\mu +1\right)^3 \nn \\ & & \left(\mu ^{24}-\mu ^{21}+\mu ^{15}-\mu ^{12}+\mu ^9-\mu ^3+1\right)^2 \left(\mu ^{24}+\mu ^{21}-\mu
   ^{15}-\mu ^{12}-\mu ^9+\mu ^3+1\right)^3.
\eea
\end{example}

\begin{example}

As a final example, consider
\beq
(\chi_{i})_i(g)= (0, -1, 0, 0, 1, 0, 0, 0).
\label{eq:exfin}
\eeq
This example was found heuristically while constructing a set of sampling
values $Q^{(\iota)}$ satisfying the three conditions of \cref{thm:round}. This
was done by generating sampling values with a uniform random measure and testing
{\it a posteriori} that they were admissible under the conditions of
\cref{thm:round}; only in one case did we find a set of values in
$\mathfrak{S}$ for which $\det\cJ(Q^{(\iota)})=0$, which is precisely
\eqref{eq:exfin}.
%\begin{rmk}
The singularity of the Jacobian is a sufficient condition for the Galois
group of the splitting field of $\mathfrak{Q}_{240}$ to fail to be the full
Weyl group -- this indicates that the corresponding group element is on the
boundary of a Weyl chamber, and hence the map from the Weyl group to the
Galois group has a kernel containing the corresponding reflection. This is however not necessary --  \cref{ex:chi0} is a
counter-example to this.
%but I do not
%know if it is necessary as well. If that were the case, then the results of
%\cite{MR3038553}, where the authors establish the exponential decay of the
%probability of the Galois group $\mathfrak{Q}_{240}$ being a proper subgroup
%of the Weyl group as a function of the length of a random walk in $\cT_{E_8}$,
%could be strenghtened to imply that this probability (under a uniform measure
%on $\cT_{E_8}$ is actually zero, since these points occur in co-dimension one
%on the torus.
%\end{rmk}

Switching back to calculations, the solution of \cref{prob:e8num} gives
\beq
\chi_{\wedge^k \mathfrak{e}_8}(g) = \l\{\bary{cc}  (-1)^{k+j} \tau_{\lfloor \frac{j}{4}\rfloor} & k = \lfloor \frac{7j}{2}
\rfloor , ~j \in \bbN
\\ 0 & \mathrm{else,}  \eary \r.
\eeq
where $\{\tau_j\}_{j=0}^{9}=\{1,13,82,334,985,2233,4030,5914,7144\}$. This gives
  \bea
  \mathfrak{Q}_{240}(\mu)  &=& \Phi_1^{2}(\mu) \Phi_2^{10}(\mu)\Phi_3(\mu)
  \Phi_{7}^{9}(\mu)\Phi_{12}^4(\mu) \Phi_{14}^{10}(\mu)\Phi_{84}^4(\mu)\nn \\
&=&  (\mu-1)^2 (\mu+1)^{10}\left(\mu ^2+\mu +1\right) \left(\mu ^4-\mu ^2+1\right)^4
  \left(\mu ^6-\mu ^5+\mu ^4-\mu ^3+\mu ^2-\mu +1\right)^{10} \nn \\ & & \left(\mu ^6+\mu ^5+\mu ^4+\mu ^3+\mu ^2+\mu
   +1\right)^9 \left(\mu ^{24}+\mu ^{22}-\mu ^{18}-\mu ^{16}+\mu ^{12}-\mu
   ^8-\mu ^6+\mu ^2+1\right)^4.\nn \\
   \eea
  \end{example}

\subsection{The Jouve--Kowalski--Zywina polynomial}

Let us move on to consider the irreducible examples for which the splitting
field has Galois group the full Weyl group $\cW(E_8)$. In \cite{MR2523316}, 
Jouve, Kowalski and Zywina give an example of such an explicit integral
polynomial in their Appendix, precisely by considering the characteristic
polynomial of a specific group element $g\in E_8$: hence this should be
reproducibile from the general knowledge of $N_\iota^{(k)}$ for a specific
choice of $\chi_{i}$. I will just content myself to reproduce their result from the
solution of \cref{prob:e8num} without proof: set
\beq
(\chi_{i})_i(g)= (38412, 699221720, 8046927290936, 42593483592, 175914484, 531468, 1044,
  4538992).
\label{eq:chijkz}
\eeq
I claim that these correspond to the values for the regular fundamental
characters of the special group element $g$ chosen in \cite{MR2523316}; this can
be checked directly from the definition of $g$ in
\cite[Section~3]{MR2523316}. Using the solution found for $N_\iota^{(k)}$, we
get
\bea
(\chi_{\wedge^k \mathfrak{e}_8}(g))_{k=1}^{120} &=&
(1044,532512,177003376,43147804716,8230609109252,1280164588118952, \nn \\ & &
167041280674255148,18671692028344452040,1816777039210236799436, \dots
\nn \\ & &
\dots, 1.0653573147903152502124530638226214941071412605209\times 10^{108})
\eea
from which we get
\beq
\mathfrak{Q}_{240}(\mu)=\mu^{120} \sum_{k=0}^{120} q_k (\mu+\mu^{-1})^k
\eeq
with
\bea
q_k &=&
(-1036,524076,-172657460,41688975082,-7871527038772,1211012431626440, \nn \\ &
& -156184748605164508,
17242140511966984109, -1655565532193307303324,
\nn \\ & & \dots, 3.6558789498396792285456042110665879483526916202580\dots\times
10^{86})
\eea
where $q_k$ match all coefficients in the final table of \cite[Appendix~B]{MR2523316}.

\subsection{The group theory lift of the Shioda polynomial}

Another example of unreduced Galois group is provided by Shioda in
\cite[Prop.~2.2]{MR1123543} by exhibiting an explicit even integral polynomial
$\Psi_{240}(v)$ (see {\it ibidem}, Eq.~(2.3)) satisfying the sufficient
conditions of \cite[Lemma 3.2]{MR2523316} for the Galois group to be the full
Weyl group of $E_8$. It turns out that his example is a Lie-algebraic limit of the
characteristic polynomials constructed here: it can be verified directly
using Newton--Girard identities and the explicit form of the root module
for $\mathfrak{e}_8$ that there exists a regular element $h\in\mathfrak{e}_8$
such that
\beq
\Psi(v)=v^{-8}\det_{\mathfrak{e}_8}(v-h)=\prod_{\alpha\in\Delta}(v-\a \cdot h)
\label{eq:psish}
\eeq
from which we can read off directly the value of the regular fundamental
characters $\chi_{i}(\re^h)$. We obtain from \cite[Eq.~(2.3)]{MR2561902} that
\beq
\chi_{i}(\re^h)=(3811, 6967872, 6529441632, 128894934, 1829184, 23404,184, 129266)_i.
\eeq
We can use the solution of \cref{prob:e8num} to construct the characteristic
polynomial for the group theory lift of \eqref{eq:psish}, which by the same
token has Galois group the full Weyl group. We get
\beq
\mathfrak{Q}_{240}(\mu)=\mu^{120} \sum_{k=0}^{120} \tilde q_k (\mu^k+\mu^{-k})
\eeq
with
\bea
\tilde q_k &=&
(-1,-176,-22152,-1680656,-119102436,-5862463536,-264263737336,\nn
\\ & & -13802737369104,
-447652929177642,-3448304727308240,469573988622035048,\nn
\\ & & 22268833325609862288,
 474582729791826806540,1387986435785748203824,
\nn \\ & & \dots, 8.68482933306707881880061436471\dots\times10^{87}).
\eea

\begin{appendix}

  \section{Auxiliary files}
  \label{sec:auxfiles}

  The \textsf{E8Char} package available at \texttt{http://tiny.cc/E8Char}
  consists of five binary files containing numerical data for the set of
  admissible exponents $\iota$, the coefficients $N_\iota^{(k)}$ of the
  solution of \cref{prob:e8num},
sampling values, pre-images and error bounds  $u^\iota$, $Q^\iota$ and $\delta_\iota^{\rm max}$, plus a {\it Mathematica} notebook and further
  weight data in ASCII format for $\Gamma(\omega_i)$, $i=1,7,8$. The structure
  of the binary files is described below.

  \begin{longtable}{|l|p{13cm}|}
    \hline
    Filename & Description \\
    \hline
    {\tt DictFile.bin} & This stores the admissible exponents $\iota$ as an
    $8$-bit integer
    array $\{\{\iota_j\}_{j=1}^8\}_{\iota\in\mathfrak{I}}$. \\
    \hline
    {\tt SolFileE8.bin} & This stores the solution coefficients
    $N_\iota^{(k)}$ as a $64$-bit integer
    array $\{\{N_\iota^{(k)}\}_{k=1}^{120}\}_{\iota\in\mathfrak{I}}$. \\
    \hline
    {\tt USamples.bin} & This stores the sampling values
    $u^\iota=:(p^\iota/q^{\iota}+\ri r^\iota/s^\iota)$ as an $8$-bit integer
    array $\{\{p^\iota_j, q^\iota_j, r^\iota_j,
    s^\iota_j\}_{j=1}^8\}_{\iota\in\mathfrak{I}}$. \\
    \hline
    {\tt AllRoots.bin} & This stores the $496$-bit precision pre-images
    $Q^\iota$ of the
    sampling values  $u^\iota$ as a $512$-bit floating point
    array $\{\{\mathfrak{Re} Q^\iota_j, \mathfrak{Im}
    Q^\iota_j\}_{\iota\in\mathfrak{I}}$. Each $512$-bit block is structured as
    follows: the first 16-bit limb represents the exponent as a signed 16-bit
    integer; the next bit represents the sign; the remaining 495 bits store
    the mantissa. \\
    \hline
        {\tt Delta.bin} & This stores the maximum values on the upper
        bounds $\delta_\iota^{\rm max}$ as a $64$-bit floating point     array $\{\delta_\iota\}_{\iota\in\mathfrak{I}}$. \\
    \hline
    \end{longtable}

  The C~source code for the calculations in
  \cref{sec:complphi,sec:compbphi,sec:impl} leading up to the results of the
  above files is available upon request.

%  \newpage
  
  \section{Table of exterior character relations}
  \label{sec:bigtable}

  The table below gives the polynomial character decompositions of
  $\chi_{\wedge^k \mathfrak{e}_8}$ for $k$ up to 11. 
  
%  \begin{table}[h]
  \begin{longtable}{|c|p{15cm}|}
    \hline
    $k$ & $\chi_{\wedge^k \mathfrak{e}_8}=\mathfrak{p}_k(\chi_{1}, \dots, \chi_{8})$ \\ \hline  
 1 & $\chi_{7}$\\ \hline
 2 & $\chi_{6}$ + $\chi_{7}$\\ \hline
 3 & $\chi_{5}$ $-$ $\chi_{7}$ + $\chi_{7}^2$\\ \hline
 4 & $\chi_{4}$ $-$ $\chi_{6}$ + $\chi_{6}$
 $\chi_{7}$\\ \hline
 5 & $\chi_{3}$ $-$ $\chi_{7}$ + $\chi_{5} \chi_{7}$ $-$ 2 $\chi_{6} \chi_{7}$ $-$ $\chi_{7}^2$ + 
    $\chi_{7}^3$\\ \hline
 6 & $-$$\chi_{1}^2$ $-$ $\chi_{1}^3$ + 2 $\chi_{1} \chi_{2}$ $-$ $\chi_{3}$ $-$ $\chi_{1} \chi_{4}$ + 
    $\chi_{1} \chi_{5}$ $-$ $\chi_{1} \chi_{6}$ $-$ $\chi_{1}^2$ $\chi_{6}$ + $\chi_{2} \chi_{6}$ + 
    $\chi_{5} \chi_{6}$ $-$ $\chi_{6}^2$ $-$ 2 $\chi_{1} \chi_{7}$ + $\chi_{1}^2$ $\chi_{7}$ + 
    $\chi_{2} \chi_{7}$ + $\chi_{4} \chi_{7}$ $-$ 2 $\chi_{6} \chi_{7}$ + $\chi_{1} \chi_{6} \chi_{7}$ $-$ 
    $\chi_{7}^2$ + 2 $\chi_{1} \chi_{7}^2$ + $\chi_{6} \chi_{7}^2$ $-$ $\chi_{1} \chi_{8}$ + 
    $\chi_{2} \chi_{8}$ $-$ $\chi_{6} \chi_{8}$
 $-$ $\chi_{7} \chi_{8}$\\ \hline
 7 & $\chi_{1}$ + 2 $\chi_{1}^2$ + $\chi_{1}^3$ $-$ $\chi_{1} \chi_{2}$ + $\chi_{2}^2$ $-$ $\chi_{3}$ $-$ 
    2 $\chi_{1} \chi_{3}$ + $\chi_{4}$ + $\chi_{1} \chi_{4}$ $-$ $\chi_{5}$ $-$ 3 $\chi_{1} \chi_{5}$ $-$ 
    2 $\chi_{1}^2$ $\chi_{5}$ + 2 $\chi_{2} \chi_{5}$ + $\chi_{5}^2$ + $\chi_{6}$ + 
    3 $\chi_{1} \chi_{6}$ + 2 $\chi_{1}^2$ $\chi_{6}$ $-$ $\chi_{2} \chi_{6}$ + $\chi_{4} \chi_{6}$ $-$ 
    2 $\chi_{5} \chi_{6}$ + $\chi_{6}^2$ + $\chi_{1} \chi_{6}^2$ + 2 $\chi_{7}$ + 
    4 $\chi_{1} \chi_{7}$ $-$ $\chi_{1}^2$ $\chi_{7}$ $-$ 2 $\chi_{1}^3$ $\chi_{7}$ $-$ 2 $\chi_{2} \chi_{7}$ + 
    4 $\chi_{1} \chi_{2} \chi_{7}$ $-$ $\chi_{3} \chi_{7}$ + $\chi_{4} \chi_{7}$ $-$ 4 $\chi_{5} \chi_{7}$ + 
    2 $\chi_{1} \chi_{5} \chi_{7}$ + 4 $\chi_{6} \chi_{7}$ + $\chi_{1} \chi_{6} \chi_{7}$ + 
    $\chi_{7}^2$ $-$ 6 $\chi_{1} \chi_{7}^2$ + $\chi_{1}^2$ $\chi_{7}^2$ + 2 $\chi_{2} \chi_{7}^2$ + 
    2 $\chi_{5} \chi_{7}^2$ $-$ 2 $\chi_{6} \chi_{7}^2$ $-$ 4 $\chi_{7}^3$ + 2 $\chi_{1} \chi_{7}^3$ + 
    $\chi_{7}^4$ $-$ $\chi_{1} \chi_{8}$ $-$ $\chi_{1}^2$ $\chi_{8}$ + $\chi_{2} \chi_{8}$ $-$ 
    $\chi_{4} \chi_{8}$ $-$ $\chi_{1} \chi_{6} \chi_{8}$ $-$ $\chi_{7} \chi_{8}$ $-$ 
    $\chi_{1} \chi_{7} \chi_{8}$ $-$ $\chi_{6}$
    $\chi_{7} \chi_{8}$ + $\chi_{1}$
    $\chi_{8}^2$\\ \hline
 8 & $-\chi_{1}^4$ + $\chi_{2}$ + 3 $\chi_{1}^2$ $\chi_{2}$ $-$ $\chi_{2}^2$ + $\chi_{3}$ $-$ 
    3 $\chi_{1} \chi_{3}$ $-$ $\chi_{4}$ + 2 $\chi_{1} \chi_{4}$ $-$ $\chi_{1}^2$ $\chi_{4}$ + 
    $\chi_{2} \chi_{4}$ + 2 $\chi_{5}$ $-$ $\chi_{1} \chi_{5}$ + $\chi_{1}^2$ $\chi_{5}$ $-$ 
    2 $\chi_{2} \chi_{5}$ + 2 $\chi_{4} \chi_{5}$ $-$ 2 $\chi_{5}^2$ + 2 $\chi_{1} \chi_{6}$ + 
    $\chi_{1}^2$ $\chi_{6}$ $-$ $\chi_{1}^3$ $\chi_{6}$ + 2 $\chi_{1} \chi_{2} \chi_{6}$ $-$ 
    2 $\chi_{3} \chi_{6}$ + $\chi_{4} \chi_{6}$ $-$ $\chi_{5} \chi_{6}$ + $\chi_{1} \chi_{5} \chi_{6}$ + 
    $\chi_{6}^2$ + $\chi_{1} \chi_{6}^2$ $-$ 2 $\chi_{7}$ + 5 $\chi_{1} \chi_{7}$ $-$ 
    $\chi_{1}^2$ $\chi_{7}$ + $\chi_{2} \chi_{7}$ + 2 $\chi_{1} \chi_{2} \chi_{7}$ $-$ 
    3 $\chi_{3} \chi_{7}$ + $\chi_{1} \chi_{4} \chi_{7}$ + $\chi_{5} \chi_{7}$ $-$ 
    3 $\chi_{1} \chi_{5} \chi_{7}$ + $\chi_{6} \chi_{7}$ + 2 $\chi_{1} \chi_{6} \chi_{7}$ + 
    2 $\chi_{2} \chi_{6} \chi_{7}$ + $\chi_{5} \chi_{6} \chi_{7}$ + $\chi_{6}^2$ $\chi_{7}$ + 
    3 $\chi_{7}^2$ $-$ $\chi_{1} \chi_{7}^2$ + 2 $\chi_{1}^2$ $\chi_{7}^2$ $-$ $\chi_{2} \chi_{7}^2$ + 
    2 $\chi_{4} \chi_{7}^2$ $-$ 4 $\chi_{5} \chi_{7}^2$ + 3 $\chi_{1} \chi_{6} \chi_{7}^2$ + 
    $\chi_{7}^3$ $-$ 2 $\chi_{1} \chi_{7}^3$ + $\chi_{6} \chi_{7}^3$ $-$ 2 $\chi_{7}^4$ + $\chi_{8}$ + 
    $\chi_{1} \chi_{8}$ $-$ $\chi_{2} \chi_{8}$ + $\chi_{1} \chi_{2} \chi_{8}$ $-$ 3 $\chi_{3} \chi_{8}$ + 
    2 $\chi_{4} \chi_{8}$ $-$ 3 $\chi_{5} \chi_{8}$ $-$ 2 $\chi_{1} \chi_{5} \chi_{8}$ + 
    2 $\chi_{6} \chi_{8}$ + 4 $\chi_{7} \chi_{8}$ $-$ 3 $\chi_{1} \chi_{7} \chi_{8}$ $-$ 
    2 $\chi_{1}^2$ $\chi_{7} \chi_{8}$ + 3 $\chi_{2} \chi_{7} \chi_{8}$ + 
    2 $\chi_{6} \chi_{7} \chi_{8}$ $-$ 4 $\chi_{7}^2$ $\chi_{8}$ + $\chi_{1} \chi_{7}^2$ $\chi_{8}$ $-$ 
    $\chi_{8}^2$ + $\chi_{8}^3$\\ \hline
 9 & $-\chi_{1}^2$ $-$ $\chi_{1}^3$ + $\chi_{2}$ + 2 $\chi_{1} \chi_{2}$ + $\chi_{1} \chi_{3}$ + 
    $\chi_{1}^2$ $\chi_{3}$ $-$ 2 $\chi_{2} \chi_{3}$ $-$ $\chi_{4}$ $-$ $\chi_{1} \chi_{4}$ + 
    $\chi_{1}^2$ $\chi_{4}$ $-$ 2 $\chi_{2} \chi_{4}$ + 2 $\chi_{4}^2$ + 2 $\chi_{1} \chi_{5}$ + 
    $\chi_{1}^2$ $\chi_{5}$ $-$ 2 $\chi_{2} \chi_{5}$ $-$ 3 $\chi_{3} \chi_{5}$ $-$ $\chi_{5}^2$ + $\chi_{6}$ + 
    $\chi_{1} \chi_{6}$ $-$ $\chi_{1}^2$ $\chi_{6}$ + 4 $\chi_{2} \chi_{6}$ + $\chi_{1} \chi_{2} \chi_{6}$ $-$ 
    2 $\chi_{3} \chi_{6}$ $-$ $\chi_{4} \chi_{6}$ + $\chi_{1} \chi_{4} \chi_{6}$ + $\chi_{5} \chi_{6}$ $-$ 
    2 $\chi_{1} \chi_{5} \chi_{6}$ + 3 $\chi_{6}^2$ + 2 $\chi_{1} \chi_{6}^2$ + 
    $\chi_{2} \chi_{6}^2$ + 2 $\chi_{6}^3$ $-$ $\chi_{1} \chi_{7}$ + $\chi_{1}^2$ $\chi_{7}$ + 
    2 $\chi_{1}^3$ $\chi_{7}$ + 5 $\chi_{2} \chi_{7}$ $-$ 3 $\chi_{1} \chi_{2} \chi_{7}$ + 
    $\chi_{2}^2$ $\chi_{7}$ + 2 $\chi_{3} \chi_{7}$ $-$ 4 $\chi_{1} \chi_{3} \chi_{7}$ $-$ 
    $\chi_{4} \chi_{7}$ + 2 $\chi_{1} \chi_{4} \chi_{7}$ + 5 $\chi_{5} \chi_{7}$ $-$ 
    6 $\chi_{1} \chi_{5} \chi_{7}$ $-$ 2 $\chi_{1}^2$ $\chi_{5} \chi_{7}$ + 
    2 $\chi_{2} \chi_{5} \chi_{7}$ + 4 $\chi_{6} \chi_{7}$ + 5 $\chi_{1} \chi_{6} \chi_{7}$ + 
    $\chi_{1}^2$ $\chi_{6} \chi_{7}$ + 2 $\chi_{2} \chi_{6} \chi_{7}$ + 3 $\chi_{4} \chi_{6} \chi_{7}$ $-$ 
    4 $\chi_{5} \chi_{6} \chi_{7}$ + 6 $\chi_{6}^2$ $\chi_{7}$ + 2 $\chi_{1} \chi_{6}^2$ $\chi_{7}$ $-$ 
    2 $\chi_{7}^2$ + 10 $\chi_{1} \chi_{7}^2$ $-$ $\chi_{1}^2$ $\chi_{7}^2$ $-$ 
    2 $\chi_{1}^3$ $\chi_{7}^2$ $-$ 4 $\chi_{2} \chi_{7}^2$ + 4 $\chi_{1} \chi_{2} \chi_{7}^2$ $-$ 
    2 $\chi_{3} \chi_{7}^2$ + $\chi_{4} \chi_{7}^2$ $-$ 5 $\chi_{5} \chi_{7}^2$ + 
    2 $\chi_{1} \chi_{5} \chi_{7}^2$ + 2 $\chi_{6} \chi_{7}^2$ + 2 $\chi_{1} \chi_{6} \chi_{7}^2$ + 
    $\chi_{6}^2$ $\chi_{7}^2$ + 4 $\chi_{7}^3$ $-$ 7 $\chi_{1} \chi_{7}^3$ + 
    2 $\chi_{1}^2$ $\chi_{7}^3$ $-$ 3 $\chi_{6} \chi_{7}^3$ $-$ 2 $\chi_{7}^4$ $-$ $\chi_{1}^3$ $\chi_{8}$ + 
    2 $\chi_{1} \chi_{2} \chi_{8}$ $-$ $\chi_{3} \chi_{8}$ + $\chi_{4} \chi_{8}$ $-$ 
    2 $\chi_{1} \chi_{4} \chi_{8}$ + $\chi_{1} \chi_{5} \chi_{8}$ $-$ $\chi_{1}^2$ $\chi_{6} \chi_{8}$ + 
    $\chi_{2} \chi_{6} \chi_{8}$ $-$ $\chi_{5} \chi_{6} \chi_{8}$ + $\chi_{7} \chi_{8}$ $-$ 
    2 $\chi_{1} \chi_{7} \chi_{8}$ + $\chi_{4} \chi_{7} \chi_{8}$ $-$ 2 $\chi_{5} \chi_{7} \chi_{8}$ $-$ 
    $\chi_{6} \chi_{7} \chi_{8}$ $-$ 2 $\chi_{1} \chi_{6} \chi_{7} \chi_{8}$ $-$ $\chi_{7}^2$ $\chi_{8}$ + 
    $\chi_{6} \chi_{7}^2$ $\chi_{8}$ $-$ $\chi_{1} \chi_{8}^2$ + $\chi_{2} \chi_{8}^2$ + 
    $\chi_{5} \chi_{8}^2$ $-$ 2 $\chi_{7} \chi_{8}^2$ + $\chi_{1} \chi_{7} \chi_{8}^2$ + 
    $\chi_{7}^2$ $\chi_{8}^2$\\ \hline
 10 & $-$$\chi_{1}$ $-$ $\chi_{1}^2$ + $\chi_{1}^3$ + $\chi_{1}^4$ $-$ $\chi_{1} \chi_{2}$ $-$ 
    2 $\chi_{1}^2$ $\chi_{2}$ + $\chi_{3}$ $-$ $\chi_{1}^2$ $\chi_{3}$ + $\chi_{2} \chi_{3}$ $-$ $\chi_{4}$ $-$ 
    $\chi_{1} \chi_{4}$ + 2 $\chi_{1}^2$ $\chi_{4}$ + $\chi_{1}^3$ $\chi_{4}$ $-$ $\chi_{2} \chi_{4}$ $-$ 
    3 $\chi_{1} \chi_{2} \chi_{4}$ + $\chi_{3} \chi_{4}$ $-$ 2 $\chi_{4}^2$ $-$ 2 $\chi_{1} \chi_{5}$ $-$ 
    2 $\chi_{1}^2$ $\chi_{5}$ + 3 $\chi_{2} \chi_{5}$ + 2 $\chi_{3} \chi_{5}$ + 
    $\chi_{1} \chi_{4} \chi_{5}$ + 2 $\chi_{5}^2$ + $\chi_{1} \chi_{5}^2$ $-$ $\chi_{6}$ $-$ 
    $\chi_{1} \chi_{6}$ + 2 $\chi_{1}^3$ $\chi_{6}$ $-$ 3 $\chi_{1} \chi_{2} \chi_{6}$ + 
    2 $\chi_{3} \chi_{6}$ $-$ 2 $\chi_{1} \chi_{3} \chi_{6}$ $-$ 3 $\chi_{4} \chi_{6}$ $-$ 
    4 $\chi_{1} \chi_{5} \chi_{6}$ + $\chi_{2} \chi_{5} \chi_{6}$ $-$ $\chi_{5}^2$ $\chi_{6}$ + 
    2 $\chi_{1} \chi_{6}^2$ + $\chi_{1}^2$ $\chi_{6}^2$ $-$ $\chi_{2} \chi_{6}^2$ $-$ 
    $\chi_{5} \chi_{6}^2$ + $\chi_{6}^3$ + $\chi_{1} \chi_{6}^3$ $-$ 2 $\chi_{7}$ + 
    2 $\chi_{1}^2$ $\chi_{7}$ $-$ $\chi_{1}^3$ $\chi_{7}$ $-$ $\chi_{1}^4$ $\chi_{7}$ $-$ $\chi_{2} \chi_{7}$ $-$ 
    $\chi_{1} \chi_{2} \chi_{7}$ + 3 $\chi_{1}^2$ $\chi_{2} \chi_{7}$ $-$ $\chi_{2}^2$ $\chi_{7}$ + 
    3 $\chi_{3} \chi_{7}$ $-$ 2 $\chi_{1} \chi_{3} \chi_{7}$ $-$ 9 $\chi_{4} \chi_{7}$ $-$ 
    $\chi_{1}^2$ $\chi_{4} \chi_{7}$ $-$ 2 $\chi_{2} \chi_{4} \chi_{7}$ + 2 $\chi_{1} \chi_{5} \chi_{7}$ $-$ 
    $\chi_{2} \chi_{5} \chi_{7}$ + 3 $\chi_{4} \chi_{5} \chi_{7}$ $-$ $\chi_{5}^2$ $\chi_{7}$ $-$ 
   3 $\chi_{6} \chi_{7}$ + 8 $\chi_{1} \chi_{6}
   \chi_{7}$ + $\chi_{1}^2$ $\chi_{6} \chi_{7}$
   $-$
    $\chi_{1}^3$ $\chi_{6} \chi_{7}$ $-$ $\chi_{2} \chi_{6} \chi_{7}$ + 
    2 $\chi_{1} \chi_{2} \chi_{6} \chi_{7}$ $-$ 3 $\chi_{3} \chi_{6} \chi_{7}$ $-$ 
    6 $\chi_{4} \chi_{6} \chi_{7}$ $-$ $\chi_{5} \chi_{6} \chi_{7}$ $-$ 
    2 $\chi_{1} \chi_{5} \chi_{6} \chi_{7}$ + 3 $\chi_{6}^2$ $\chi_{7}$ + 
    5 $\chi_{1} \chi_{6}^2$ $\chi_{7}$ + 2 $\chi_{6}^3$ $\chi_{7}$ $-$ 5 $\chi_{7}^2$ + 
    4 $\chi_{1} \chi_{7}^2$ $-$ 2 $\chi_{1}^2$ $\chi_{7}^2$ + $\chi_{1}^3$ $\chi_{7}^2$ + 
    3 $\chi_{2} \chi_{7}^2$ $-$ 5 $\chi_{4} \chi_{7}^2$ + 2 $\chi_{1} \chi_{4} \chi_{7}^2$ + 
    9 $\chi_{5} \chi_{7}^2$ $-$ 4 $\chi_{1} \chi_{5} \chi_{7}^2$ + $\chi_{1}^2$ $\chi_{6} \chi_{7}^2$ + 
    $\chi_{2} \chi_{6} \chi_{7}^2$ $-$ 2 $\chi_{5} \chi_{6} \chi_{7}^2$ + 2 $\chi_{6}^2$ $\chi_{7}^2$ $-$ 
    $\chi_{7}^3$ + 2 $\chi_{1} \chi_{7}^3$ $-$ $\chi_{2} \chi_{7}^3$ + 4 $\chi_{4} \chi_{7}^3$ $-$ 
    $\chi_{5} \chi_{7}^3$ $-$ 4 $\chi_{6} \chi_{7}^3$ + 4 $\chi_{7}^4$ $-$ 3 $\chi_{1} \chi_{7}^4$ $-$ 
    $\chi_{6} \chi_{7}^4$ $-$ $\chi_{8}$ $-$ 3 $\chi_{1} \chi_{8}$ $-$ $\chi_{1}^2$ $\chi_{8}$ + 
    $\chi_{2} \chi_{8}$ $-$ 2 $\chi_{1} \chi_{2} \chi_{8}$ + 3 $\chi_{3} \chi_{8}$ + 
    $\chi_{1} \chi_{3} \chi_{8}$ $-$ 3 $\chi_{4} \chi_{8}$ + $\chi_{1} \chi_{4} \chi_{8}$ + 
    2 $\chi_{5} \chi_{8}$ + $\chi_{1} \chi_{5} \chi_{8}$ $-$ $\chi_{1}^2$ $\chi_{5} \chi_{8}$ + 
    2 $\chi_{2} \chi_{5} \chi_{8}$ + $\chi_{5}^2$ $\chi_{8}$ $-$ 2 $\chi_{6} \chi_{8}$ $-$ 
    4 $\chi_{1} \chi_{6} \chi_{8}$ $-$ $\chi_{1}^2$ $\chi_{6} \chi_{8}$ $-$ $\chi_{4} \chi_{6} \chi_{8}$ $-$ 
    $\chi_{5} \chi_{6} \chi_{8}$ $-$ 2 $\chi_{1} \chi_{6}^2$ $\chi_{8}$ $-$ 5 $\chi_{7} \chi_{8}$ + 
    2 $\chi_{1} \chi_{7} \chi_{8}$ $-$ $\chi_{2} \chi_{7} \chi_{8}$ + 
    $\chi_{1} \chi_{2} \chi_{7} \chi_{8}$ $-$ $\chi_{3} \chi_{7} \chi_{8}$ $-$ $\chi_{4} \chi_{7} \chi_{8}$ + 
    2 $\chi_{5} \chi_{7} \chi_{8}$ + $\chi_{1} \chi_{5} \chi_{7} \chi_{8}$ $-$ 
    3 $\chi_{6} \chi_{7} \chi_{8}$ $-$ $\chi_{1} \chi_{6} \chi_{7} \chi_{8}$ + 2 $\chi_{7}^2$ $\chi_{8}$ $-$ 
    2 $\chi_{1}^2$ $\chi_{7}^2$ $\chi_{8}$ + 2 $\chi_{2} \chi_{7}^2$ $\chi_{8}$ + 
    3 $\chi_{5} \chi_{7}^2$ $\chi_{8}$ $-$ 4 $\chi_{6} \chi_{7}^2$ $\chi_{8}$ $-$ 2 $\chi_{7}^3$ $\chi_{8}$ + 
    2 $\chi_{1} \chi_{7}^3$ $\chi_{8}$ + 2 $\chi_{7}^4$ $\chi_{8}$ $-$ $\chi_{8}^2$ + 
    $\chi_{1} \chi_{8}^2$ $-$ $\chi_{4} \chi_{8}^2$ $-$ $\chi_{6} \chi_{8}^2$ + 
    $\chi_{1} \chi_{6} \chi_{8}^2$ $-$ $\chi_{7} \chi_{8}^2$ + 2 $\chi_{1} \chi_{7} \chi_{8}^2$ $-$ 
    $\chi_{7}^2$ $\chi_{8}^2$ $-$ $\chi_{8}^3$ \\ \hline
    11 &
3 $\chi _7^6$ $+$ $3 \chi _7^5$ $+$ $5 \chi _5 \chi _7^4$ $-$ $15 \chi _6 \chi
_7^4$ $+$ $2 \chi _8 \chi _7^4$ $-$ $8 \chi _7^4$ $-$ $2 \chi _1^2 \chi _7^3$
$-$ $5 \chi _6^2 \chi _7^3$ $-$ $\chi _8^2 \chi _7^3$ $+$ $\chi _1 \chi _7^3$
$+$ $2 \chi _2 \chi    _7^3$ $+$ $4 \chi _3 \chi _7^3$ $-$ $5 \chi _4 \chi
_7^3$ $+$ $2 \chi _1 \chi _5 \chi _7^3$ $+$ $14 \chi _5 \chi _7^3$ $-$ $6 \chi
_1 \chi _6 \chi _7^3$ $-$ $10 \chi _6 \chi _7^3$ $+$ $2 \chi _1 \chi _8 \chi_7^3$ $+$ $\chi _6 \chi _8 \chi _7^3$ $+$ $4 \chi _8 \chi _7^3$ $-$ $10 \chi _7^3$ $+$ $2 \chi _5^2 \chi _7^2$ $+$ $2 \chi _1 \chi _6^2 \chi _7^2$ $+$ $12 \chi _6^2 \chi _7^2$ $-$ $\chi _1 \chi _8^2 \chi _7^2$ $-$ $9 \chi _1 \chi _7^2$ $+$ $2 \chi _2 \chi _7^2$ $+$ $5 \chi _3 \chi
   _7^2$ $-$ $\chi _1 \chi _4 \chi _7^2$ $-$ $3 \chi _4 \chi _7^2$ $-$ $\chi _1^2 \chi _5 \chi _7^2$ $+$ $7 \chi _1 \chi _5 \chi _7^2$ $+$ $2 \chi _2 \chi _5 \chi _7^2$ $-$ $5 \chi _5 \chi _7^2$ $+$ $3 \chi _4 \chi _6 \chi _7^2$ $-$ $13 \chi _5
   \chi _6 \chi _7^2$ $+$ $17 \chi _6 \chi _7^2$ $+$ $3 \chi _1^2 \chi _8 \chi _7^2$ $+$ $\chi _1 \chi _8 \chi _7^2$ $-$ $2 \chi _2 \chi _8 \chi _7^2$ $+$ $\chi _4 \chi _8 \chi _7^2$ $+$ $2 \chi _5 \chi _8 \chi _7^2$ $+$ $3 \chi _1 \chi _6
   \chi _8 \chi _7^2$ $-$ $11 \chi _6 \chi _8 \chi _7^2$ $-$ $7 \chi _8 \chi _7^2$ $+$ $9 \chi _7^2$ $-$ $3 \chi _1^3 \chi _7$ $+$ $13 \chi _6^3 \chi _7$ $+$ $2 \chi _1^2 \chi _7$ $+$ $4 \chi _1 \chi _5^2 \chi _7$ $+$ $8 \chi _5^2 \chi _7$ $-$ $\chi
   _1^2 \chi _6^2 \chi _7$ $+$ $9 \chi _1 \chi _6^2 \chi _7$ $+$ $\chi _2 \chi _6^2 \chi _7$ $-$ $5 \chi _5 \chi _6^2 \chi _7$ $+$ $24 \chi _6^2 \chi _7$ $+$ $\chi _1^2 \chi _8^2 \chi _7$ $-$ $2 \chi _1 \chi _8^2 \chi _7$ $-$ $\chi _2 \chi
   _8^2 \chi _7$ $-$ $\chi _5 \chi _8^2 \chi _7$ $+$ $\chi _6 \chi _8^2 \chi _7$ $+$ $3 \chi _8^2 \chi _7$ $+$ $9 \chi _1 \chi _7$ $+$ $3 \chi _1 \chi _2 \chi _7$ $-$ $7 \chi _2 \chi _7$ $+$ $2 \chi _1 \chi _3 \chi _7$ $-$ $8 \chi _3 \chi _7$ $+$ $5
   \chi _1^2 \chi _4 \chi _7$ $-$ $2 \chi _1 \chi _4 \chi _7$ $-$ $8 \chi _2 \chi _4 \chi _7$ $+$ $3 \chi _4 \chi _7$ $-$ $8 \chi _1 \chi _5 \chi _7$ $+$ $\chi _2 \chi _5 \chi _7$ $+$ $5 \chi _3 \chi _5 \chi _7$ $-$ $8 \chi _4 \chi _5 \chi
   _7$ $-$ $19 \chi _5 \chi _7$ $+$ $2 \chi _1^2 \chi _6 \chi _7$ $+$ $13 \chi _1 \chi _6 \chi _7$ $-$ $3 \chi _2 \chi _6 \chi _7$ $-$ $10 \chi _3 \chi _6 \chi _7$ $-$ $15 \chi _1 \chi _5 \chi _6 \chi _7$ $-$ $21 \chi _5 \chi _6 \chi _7$ $+$ $14
   \chi _6 \chi _7$ $-$ $\chi _1^3 \chi _8 \chi _7$ $-$ $\chi _1^2 \chi _8 \chi _7$ $-$ $4 \chi _6^2 \chi _8 \chi _7$ $-$ $8 \chi _1 \chi _8 \chi _7$ $+$ $\chi _1 \chi _2 \chi _8 \chi _7$ $-$ $2 \chi _2 \chi _8 \chi _7$ $+$ $4 \chi _3 \chi
   _8 \chi _7$ $-$ $3 \chi _1 \chi _4 \chi _8 \chi _7$ $-$ $3 \chi _4 \chi _8 \chi _7$ $+$ $6 \chi _1 \chi _5 \chi _8 \chi _7$ $+$ $2 \chi _5 \chi _8 \chi _7$ $-$ $2 \chi _1^2 \chi _6 \chi _8 \chi _7$ $-$ $10 \chi _1 \chi _6 \chi _8
   \chi _7$ $+$ $2 \chi _2 \chi _6 \chi _8 \chi _7$ $+$ $2 \chi _5 \chi _6 \chi _8 \chi _7$ $-$ $5 \chi _6 \chi _8 \chi _7$ $-$ $2 \chi _8 \chi _7$ $+$ $2 \chi _7$ $+$ $\chi _1^4$ $+$ $3 \chi _6^4$ $+$ $3 \chi _1^3$ $+$ $5 \chi _1 \chi _6^3$ $+$ $7 \chi
   _6^3$ $-$ $\chi _1 \chi _8^3$ $+$ $\chi _6 \chi _8^3$ $+$ $\chi _8^3$ $+$ $3 \chi _1^2$ $+$ $2 \chi _2^2$ $+$ $\chi _3^2$ $-$ $\chi _1 \chi _4^2$ $+$ $5 \chi _1 \chi _5^2$ $+$ $4 \chi _2 \chi _5^2$ $+$ $9 \chi _5^2$ $+$ $3 \chi _1^2 \chi _6^2$ $+$ $10 \chi _1 \chi
   _6^2$ $+$ $\chi _1 \chi _2 \chi _6^2$ $-$ $6 \chi _3 \chi _6^2$ $-$ $4 \chi _1 \chi _5 \chi _6^2$ $-$ $4 \chi _5 \chi _6^2$ $+$ $6 \chi _6^2$ $+$ $2 \chi _1^2 \chi _8^2$ $+$ $3 \chi _1 \chi _8^2$ $-$ $3 \chi _2 \chi _8^2$ $+$ $2 \chi _4 \chi
   _8^2$ $-$ $\chi _1 \chi _5 \chi _8^2$ $-$ $4 \chi _5 \chi _8^2$ $+$ $\chi _1 \chi _6 \chi _8^2$ $+$ $\chi _6 \chi _8^2$ $+$ $\chi _8^2$ $+$ $\chi _1$ $-$ $2 \chi _1^2 \chi _2$ $-$ $3 \chi _1 \chi _2$ $-$ $3 \chi _1^2 \chi _3$ $-$ $5 \chi _1 \chi _3$ $+$ $3 \chi
   _2 \chi _3$ $-$ $\chi _3$ $+$ $\chi _1^2 \chi _4$ $+$ $2 \chi _1 \chi _4$ $-$ $2 \chi _1 \chi _2 \chi _4$ $-$ $\chi _2 \chi _4$ $-$ $\chi _3 \chi _4$ $+$ $\chi _1^4 \chi _5$ $-$ $2 \chi _1^3 \chi _5$ $-$ $10 \chi _1^2 \chi _5$ $+$ $2 \chi _2^2 \chi _5$ $-$ $7
   \chi _1 \chi _5$ $-$ $4 \chi _1^2 \chi _2 \chi _5$ $+$ $2 \chi _1 \chi _2 \chi _5$ $+$ $8 \chi _2 \chi _5$ $+$ $4 \chi _1 \chi _3 \chi _5$ $+$ $7 \chi _3 \chi _5$ $-$ $4 \chi _4 \chi _5$ $-$ $\chi _5$ $+$ $2 \chi _1^3 \chi _6$ $+$ $7 \chi _1^2 \chi
   _6$ $+$ $\chi _2^2 \chi _6$ $+$ $\chi _5^2 \chi _6$ $+$ $7 \chi _1 \chi _6$ $-$ $\chi _1 \chi _2 \chi _6$ $-$ $3 \chi _2 \chi _6$ $-$ $5 \chi _1 \chi _3 \chi _6$ $-$ $6 \chi _3 \chi _6$ $+$ $\chi _1^2 \chi _4 \chi _6$ $-$ $\chi _1 \chi _4 \chi _6$ $-$ $4
   \chi _2 \chi _4 \chi _6$ $+$ $\chi _4 \chi _6$ $-$ $4 \chi _1^2 \chi _5 \chi _6$ $-$ $9 \chi _1 \chi _5 \chi _6$ $+$ $\chi _2 \chi _5 \chi _6$ $+$ $\chi _4 \chi _5 \chi _6$ $-$ $8 \chi _5 \chi _6$ $+$ $2 \chi _6$ $-$ $\chi _1^3 \chi _8$ $-$ $\chi
   _1^2 \chi _8$ $-$ $2 \chi _5^2 \chi _8$ $-$ $\chi _1 \chi _6^2 \chi _8$ $+$
   $\chi _1 \chi _8$ $+$ $\chi _1 \chi _2 \chi _8$ $+$ $2 \chi _1 \chi _3 \chi
   _8$ $+$ $\chi _1^2 \chi _4 \chi _8$ $-$ $2 \chi _2 \chi _4 \chi _8$ $+$ $2
   \chi _1^2 \chi _5 \chi _8$ $+$ $3 \chi _1 \chi _5 \chi _8$ $+$ $2 \chi _5
   \chi _8$ $-$ $\chi _1^3 \chi _6 \chi _8$ $-$ $\chi _1 \chi _6 \chi _8$ $+$
   $3 \chi _1 \chi _2 \chi _6 \chi _8$ $-$ $\chi _2 \chi _6 \chi _8$ $-$ $3
   \chi _3 \chi _6 \chi _8$ $+$ $\chi _4 \chi _6 \chi _8$\\
   \hline
     \end{longtable}
%  \end{table}
\newpage

\end{appendix}

\bibliography{miabiblio}
\end{document}